\title{A Jacobian criterion for nonsingularity \\ in mixed characteristic}
\author{Melvin Hochster}
\address{Department of Mathematics, East Hall, 530 Church St., Ann Arbor, MI 48109--1043, USA}
\curraddr{}
\email{hochster@umich.edu}
\thanks{2020 {\it  Mathematics Subject Classification}. Primary 13B02, 13B10, 13B40, 13C11}
\author{Jack Jeffries}
\address{Department of Mathematics, Avery Hall,  1144 T St,
	Lincoln, NE 68588--0130, USA}
\curraddr{}
\email{jack.jeffries@unl.edu}
\date{\today}
\theoremstyle{plain}
\newtheorem{theorem}{Theorem}[section]
\newtheorem*{theorema}{Theorem A}
\newtheorem*{theoremb}{Theorem B}
\newtheorem*{theoremc}{Theorem C}
\newtheorem{corollary}[theorem]{Corollary}
\newtheorem{construction}[theorem]{Construction}
\newtheorem{proposition}[theorem]{Proposition}
\newtheorem{lemma}[theorem]{Lemma}
\theoremstyle{remark}
\newtheorem{remark}[theorem]{Remark}
\newtheorem{notation}[theorem]{Notation}
\newtheorem{discussion}[theorem]{Discussion}
\theoremstyle{definition}
\newtheorem{definition}[theorem]{Definition}
\newtheorem{example}[theorem]{Example}
\newcommand{\ben}{\begin{enumerate}}
\newcommand{\beni}{\ben[label=(\roman*)]}
\def\bsf{\boldsymbol{f}}
\newcommand{\Der}{\mathrm{Der}}
\newcommand{\Per}{\mathrm{Per}}
\newcommand{\inc}{\subseteq}
\newcommand{\cP}{\mathcal{P}}
\newcommand{\CM}{Cohen-Macaulay}
\newcommand{\inj}{\hookrightarrow}
\newcommand{\een}{\end{enumerate}}
\newcommand{\fa}{\mathfrak{a}}
\newcommand{\fb}{\mathfrak{b}}
\newcommand{\fB}{\mathfrak{B}}
\newcommand{\m}{\mathfrak{m}}
\newcommand{\n}{\mathfrak{n}}
\newcommand{\p}{\mathfrak{p}}
\newcommand{\q}{\mathfrak{q}}
\newcommand{\fra}{\mathrm{frac}}
\newcommand{\g}{{\gamma}}
\newcommand{\G}{{\Gamma}}
\newcommand{\Ge}{^{\G}_e}
\newcommand{\gt}{\wt{\g}}
\newcommand{\Gt}{^{\wt{\G}}}
\newcommand{\Gte}{\Gt_e}
\newcommand{\Ga}{^{\G}}
\newcommand{\height}{\textrm{height}}
\newcommand{\Hom}{\mathrm{Hom}}
\newcommand{\KGe}{K^{\G}_e}
\newcommand{\la}{{\lambda}}
\newcommand{\La}{{\Lambda}}
\newcommand{\N}{\mathbb{N}}
\newcommand{\Z}{\mathbb{Z}}
\newcommand{\sop}{system of parameters}
\newcommand{\Spec}{\textrm{Spec}}
\newcommand{\surj}{\twoheadrightarrow}
\newcommand{\surjdown}{\raisebox{.5pt}{$\downarrow$}\kern -5pt \raisebox{-1pt}{$\downarrow$}}
\newcommand{\tG}{\widetilde{\Gamma}}
\newcommand{\tGa}{^{\tG}}
\newcommand{\tL}{\widetilde{\Lambda}}
\newcommand{\ux}{\underline{x}}
\newcommand{\V}{\mathcal{V}}
\newcommand\vect[2]{#1_1,\,\ldots,\, #1_{#2}}
\newcommand\wh[1]{\widehat{#1}}
\newcommand\wt[1]{\widetilde{#1}}
\def\vect#1#2{{#1}_1, \, \ldots, \, {#1}_{#2}}
\newcommand{\mx}{\begin{pmatrix}}
	\newcommand{\emx}{\end{pmatrix}}
\def\todo#1
\def\forth#1
\def\fifth#1
\definecolor{grn}{rgb}{0.0, 0.5, 0.0}
\definecolor{prp}{rgb}{0.4, 0.0, 0.3}
\definecolor{ora}{rgb}{0.8, 0.4, 0.0}
\begin{document}
	
	\begin{abstract} We give a version of the usual Jacobian characterization of the defining ideal of the
	singular locus in the equal characteristic case:  the new theorem is valid for essentially affine algebras  over a complete local algebra over a mixed characteristic discrete valuation ring. The result makes use of the minors of a matrix that includes a row coming from the values of a $p$-derivation. To study the analogue of modules of differentials associated with the mixed Jacobian matrices that arise in our context, we introduce
	and investigate the notion of a {\it perivation}, which may be thought of, roughly, as a linearization of the notion of $p$-derivation. We also develop a mixed characteristic analogue of the positive characteristic $\Gamma$-construction, and apply this to give additional nonsingularity criteria. \end{abstract}
	
	\subjclass[2000]{Primary 13}
	
	\keywords{$\delta$-ring; Frobenius; Jacobian; mixed characteristic; $p$-derivation; perivation; nonsingular; regular; smooth}

	\thanks{$^1$The first author was partially supported by National Science Foundation grants
		DMS--1401384, DMS--1902116, and DMS--2200501}
	\thanks{$^2$The second author was partially supported by National Science Foundation grant DMS--1606353 and CAREER Award DMS--2044833}
	
	\maketitle
	
	\pagestyle{myheadings}
	\markboth{MELVIN HOCHSTER AND JACK JEFFRIES}{A JACOBIAN CRITERION IN MIXED CHARACTERISTIC}
	
	\section{Introduction}\label{sec:intro}                

	The Jacobian criterion is a familiar tool for finding the singular locus of a finitely generated algebra over a perfect field: for an equidimensional variety, the singular locus is cut out by minors of the Jacobian matrix whose entries are partial derivatives of the defining equations. There is a long history in the literature of Jacobian criteria to compute the singular locus for various classes of algebras over fields, including \cite{Zar,Sam,Nag,BR} (see also the references of \cite{BR}); these are based on including additional derivations along with the partial derivatives of the variables. For example \cite{BR} gives criteria, in terms of derivations,  for $A_\p$ to be a formally smooth $k$-algebra  
in the $\p$-adic topology when $k$ is an $F$-finite field,  $A = S/I$, $S$ is a polynomial ring over a formally 
smooth, local, complete $k$-algebra $R$ and $\p \in \Spec(S)$ with $I \inc \p$.  The proof of this result depends on a 
regularity criterion for $A$ when $R$ is also assumed regular. 
	
	
	In this paper, we give a Jacobian type criterion for nonsingularity in mixed characteristic. One notable feature of this result is that, in contrast to equal characteristic, nonsingularity cannot be characterized in terms of smoothness over some base. Our criterion describes the singular locus in terms of minors of matrices involving partial derivatives and $p$-derivations (in the sense of Buium \cite{Bui} and Joyal \cite{Joy}) of the generators. We recall the definition of $p$-derivation in Section~\ref{sec:p-derivation} below; as an example, if $V$ is the $p$-adic integers, and 
$T=V[\vect xn]$  or $V[[\vect xn]]$, the map $\delta$ given by \[f(x_1,\dots,x_n) \mapsto  (f(x_1^p,\dots,x_n^p) - f(x_1,\dots,x_n)^p)/p\] is a $p$-derivation. 

	\begin{theorema}[Theorem~\ref{thm-A}]\label{thm:A}
		Let $(V,pV,K)$ be a discrete valuation ring with uniformizer $p\in \Z$. Let $T$ be a polynomial ring  over a power series ring over $V$ in $n$ variables (polynomial and power series variables) $\vect xn$ total.  Let $R:=T/I$ for some ideal $I = (\vect f a)T$ of pure height~$h$. Then, for a prime $\q$  of $R$ containing $p$, the local ring $R_\q$ is nonsingular if and only if $\q$ does not contain the $h\times h$ minors of the matrix with columns
		\begin{itemize} 
			\item $[\delta(f_1),\dots,\delta(f_a)]^T$, for some $p$-derivation mod $p^2$ on $V$ extended to $T$,
			\item $\left[\left(\frac{\partial f_1}{\partial x_i}\right)^p,\dots, \left(\frac{\partial f_a}{\partial x_i}\right)^p\right]^T$, for $i=1,\dots,n$,
			\item  $\left[\left(\frac{\partial f_1}{\partial \lambda}\right)^p,\dots, \left(\frac{\partial f_a}{\partial \lambda}\right)^p\right]^T$, for $\lambda\in \Lambda$ for a $p$-base $\Lambda$ of $K$.
			\end{itemize}
	\end{theorema}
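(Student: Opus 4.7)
The strategy I would adopt is to imitate the classical Jacobian criterion, with the module of perivations $\Per$ (introduced earlier in the paper) playing the role that Kähler differentials play in equal characteristic. Since nonsingularity of $R_\q$ is a local condition and $\q$ contains $p$, I would first localize, replacing $T$ by its localization at the preimage $\fP$ of $\q$. The goal is then to express nonsingularity of $R_\q$ as a rank/Fitting-ideal condition on a Jacobian-like matrix obtained by applying an appropriate free generating set of $\Per_{T/V}\otimes k(\q)$ to the generators $f_1,\ldots,f_a$ of $I$, and then to identify that matrix with the one in the statement.

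The next step is to work out the structure of $\Per_{T/V}$. Because $T$ is a polynomial extension of a power series extension of $V$, I expect a free generating set consisting of: one perivation extending the chosen $p$-derivation $\delta$ of $V$; the ordinary partial derivatives $\partial/\partial x_i$, one for each of the $n$ variables $x_i$; and a family of partial derivatives $\partial/\partial\lambda$ indexed by the $p$-base $\Lambda$ of the residue field $K$ of $V$ — these last entering through the imperfection of $K$, which propagates into the power series part. Evaluating this free basis on $f_1,\ldots,f_a$ should produce a matrix whose columns are precisely the three types listed in the theorem, and a standard argument via the conormal sequence for $T \twoheadrightarrow R$ will show that $R_\q$ is nonsingular exactly when the $h\times h$ minors of this matrix fail to lie in $\q$, where $h$ is the (pure) height of $I$.

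The main obstacle, and the source of the $p$-th powers in the statement, is establishing the correct Leibniz-type formulas for perivations. A perivation is intended as a linearization of a $p$-derivation, but the defining identity $\delta(fg)=f^p\delta(g)+g^p\delta(f)+p\,\delta(f)\,\delta(g)$ forces any genuine linear functional extracted from $\delta$ to pick up Frobenius-twisted factors. The upshot, after reducing modulo $p$, is that ordinary partial derivatives appear in the matrix raised to the $p$-th power, while the column recorded by $\delta$ itself appears untwisted (this is also why $\delta$ need be specified only modulo $p^2$). Once the correct perivation-theoretic Leibniz rule is established and the free generators above are in hand, the remainder is routine: combining the Fitting-ideal criterion for regularity with the known formula for the embedding dimension of $R_\q$ yields the desired equivalence, and independence from the choice of $\delta$, of $\Lambda$, and of the generators $f_i$ follows automatically from the characterization of the minors in terms of an invariantly defined module.
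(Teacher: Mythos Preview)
Your outline is essentially the paper's own strategy: realize the mixed Jacobian $\widetilde{J}(\bsf)$ as a presentation matrix for the universal perivation module, use the second fundamental (conormal) sequence for $T\twoheadrightarrow R$, and reduce regularity of $R_\q$ to a rank condition at $\kappa(\q)$. The free basis you predict for $\widetilde{\Omega}_{T|(\Z,\delta)}$---indexed by $p$, the variables $x_i$, and a lifted $p$-base---is exactly what the paper establishes in Construction~\ref{construction-universal} and Remark~\ref{rem:conditions-universal}, and your explanation of why the partial-derivative columns are Frobenius-twisted while the $\delta$-column is not is correct. One notational slip: you write $\Per_{T/V}$, but working relative to $V$ would kill the $p$-base columns; the right object is the universal perivation module over $\Z$, and the $p$-base derivations appear precisely because $V$ itself has nontrivial perivations over $\Z$.

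There is, however, a genuine gap hidden behind the phrase ``standard argument via the conormal sequence.'' To extract the rank equality you need, the map
\[
F_{\kappa(\fP)}(\fP/\fP^2)\ \longrightarrow\ \widetilde{\Omega}_{T|(\Z,\delta)}\otimes_T \kappa(\fP)
\]
induced by the universal perivation must be \emph{injective}; only then does $\dim_{\kappa(\fP)}\bigl(\mathrm{im}(I\to\fP/\fP^2)\bigr)$ equal the rank of $\widetilde{J}(\bsf)$ over $\kappa(\fP)$, and only then does the embedding-dimension count go through. In the classical Jacobian criterion this injectivity (for $\p/\p^2\to\Omega\otimes\kappa(\p)$ over a regular base) is indeed standard, but for perivation modules it is the crux and not automatic. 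The paper isolates this as Lemma~\ref{lem:rankimagep} and proves it by invoking Saito~\cite[Proposition~2.4]{Sai}; once that injectivity is in hand, the rest of your sketch (the height computation $\dim T_\fP-\dim R_\q=h$ and the Fitting-ideal translation) is exactly what the paper does. You should name and justify this injectivity explicitly rather than absorb it into ``standard.''
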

See \ref{SS-insep} below for the notation $\frac{\partial f}{\partial \lambda}$ for $\lambda$ in a $p$-base. Note that for primes that do not contain $p$, $R_\q$ is nonsingular if and only if $\q$ does not contain the $h\times h$ minors of the usual Jacobian matrix; see Theorem~\ref{thm-A}. We remark also that any choice of $\delta$, any choice of $\Lambda$, and any choice of generators $f_1,\dots,f_a$ for $I$, and any presentation $R\cong T/I$ with $T$ as above is valid for the result above.


		The use of $p$-derivations to study singularities is motivated by \cite{DGJ}, where $p$-derivations were used to extend a classical theorem of Zariski and Nagata on symbolic powers. In fact, Theorem~A in the special case of a principal ideal $I$ follows from the main result of \cite{DGJ}. We note that, as our statement of Theorem~\ref{thm:A} is based on $p$-derivations, which generally do not exist in equal characteristic, there is no completely characteristic-free statement that subsumes this.

	
	The cokernel of the classical Jacobian matrix is just the module of K\"ahler differentials. In the case of algebras over a perfect field, the Jacobian criterion characterizes the regular locus as the locus on which the module of K\"ahler differentials is free of the ``correct'' rank. Theorem~A admits a similar interpretation along these lines. Given a ring $A$ with a $p$-derivation $\delta$ and an $A$-algebra $R$, we define a module $\widetilde{\Omega}_{R|A}$; in the situation of Theorem~A, this is just the cokernel of the matrix that appears in the statement. This module represents a functor of maps that satisfy simple functional identities analogous to derivations or $p$-derivations; we call these maps \emph{perivations}, and the module $\widetilde{\Omega}_{R|A}$ the \emph{universal perivation module}. In this language, we obtain the following version of Theorem~A.

\begin{theoremb}[Theorem~\ref{thm-B}]\label{thm:B}
			Let $V$ be a discrete valuation ring with uniformizer ${p\in \Z}$ and F-finite residue field (i.e., the Frobenius map on $V/pV$ is of finite index). Let $(R,\m,k)$ be  a local ring with $p\in \m$ that is essentially of finite type over a complete $V$-algebra. 
		
		The local ring $R$ is regular if and only if $\widetilde{\Omega}_{R|\Z}$ is free of rank $\dim(R)+\log_p[k:k^p]$.
	\end{theoremb}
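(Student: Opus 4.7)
My plan is to deduce Theorem~B from Theorem~A by recasting the latter as a Fitting-ideal freeness statement for the universal perivation module.

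The first step is to identify $\widetilde{\Omega}_{R|\Z}$ as the cokernel of an explicit presentation matrix. Since $R$ is local and essentially of finite type over a complete $V$-algebra, I realize $R$ as a localization $(T/I)_\m$ with $T$ as in Theorem~A and $I=(f_1,\dots,f_a)T$ of pure height $h$. The universal property of the perivation module developed earlier in the paper, applied to the surjection $T \twoheadrightarrow T/I$, yields a right exact sequence
\[
I/I^2 \xrightarrow{\ \phi\ } \widetilde{\Omega}_{T|\Z}\otimes_T R \longrightarrow \widetilde{\Omega}_{R|\Z} \longrightarrow 0,
\]
in which $\phi$ is represented by the transpose of the matrix $M$ of Theorem~A. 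Because $T$ is a polynomial ring over a power series ring, $\widetilde{\Omega}_{T|\Z}$ is free over $T$ of rank $m := 1+n+|\Lambda|$, where $\Lambda$ is a $p$-base of $K=V/pV$.

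Next I invoke the local Fitting-ideal criterion: a finitely presented $R$-module given by the transpose of an $a\times m$ matrix is free of rank $r$ if and only if the $(m-r)\times(m-r)$ minors of the matrix generate the unit ideal in $R$ and the $(m-r+1)\times(m-r+1)$ minors all vanish in $R$. Taking $r=\dim R+\log_p[k:k^p]$, I must verify the numerical identity $m-r=h$. This reduces to combining $\dim R=\dim T_\m-h$ (using that $T_\m$ is regular local and $IT_\m$ has height $h$) with a Kunz-type dimension formula in the F-finite setting, giving $\log_p[k:k^p]-\log_p[K:K^p] = 1+n-\dim T_\m$.

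With $m-r=h$ in place, Theorem~B reduces to the conjunction of (i) the $h\times h$ minors of $M$ generate the unit ideal in $R$, and (ii) the $(h+1)\times(h+1)$ minors of $M$ vanish in $R$. Statement (i) is equivalent, by Theorem~A applied at $\m$, to nonsingularity of $R$, and hence to regularity of $R$. For the forward direction, if $R$ is regular then $R$ is a regular local quotient of the regular local ring $T_\m$, so $IT_\m$ is generated by a regular sequence of length $h$ and $I/I^2$ is $R$-projective of rank $h$. Therefore $\phi$ factors through a free $R$-module of rank $h$, which gives (ii). The converse uses only (i) and Theorem~A.

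The main obstacle I expect is the precise numerical identity $m-r=h$: matching the invariants $\log_p[k:k^p]$, $|\Lambda|$, $\dim R$, and $n$ requires careful bookkeeping with Kunz-type dimension formulas in the F-finite mixed-characteristic setting, and may depend on a convenient choice of presentation $R=(T/I)_\m$. Once this identity is verified, both directions of Theorem~B follow directly from Theorem~A together with standard Fitting-ideal formalism.
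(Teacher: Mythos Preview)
Your proposal is correct and follows essentially the same route as the paper: present $\widetilde{\Omega}_{R|\Z}$ by the mixed Jacobian, use the Kunz dimension formula to identify the Fitting index $m-r$ with $h=\height(I_\m)$, and reduce both directions to Theorem~A (the paper invokes Lemma~\ref{lem:rankimagep} directly and disposes of the $(h+1)$-minors by replacing the generators of $I_\m$ with a minimal set rather than via your $I/I^2$-rank argument, but the content is the same). Two small caveats: the left term of your conormal sequence should be $F_{R/pR}(I/(I^2+pI))$ rather than $I/I^2$ (Proposition~\ref{fundl-2}), and you want $h=\height(I_\m)$ rather than assuming $I$ has global pure height---neither affects the argument.
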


In equal characteristic $p>0$, the notion of $\Gamma$-construction allows one to eliminate F-finiteness hypotheses from various statements; for an algebra $R$ essentially of finite type over a complete local ring, there is an F-finite faithfully flat purely inseparable extension $R\Ga$ that preserves many of the properties of $R$. In Section~\ref{sec:Gamma} below, we recall key properties of the $\Gamma$-construction, and develop a mixed characteristic analogue that we call the $\wt{\Gamma}$-construction with many of the same properties. Using the $\wt{\Gamma}$-construction, we state a version of Theorem~B without any F-finiteness hypothesis.

\begin{theoremc}
	[Corollary~\ref{cor-thmC}]\label{thm:C}
	Let $V$ be a discrete valuation ring with uniformizer ${p\in \Z}$. Let $(R,\m,k)$ a local ring with $p\in \m$ that is essentially of finite type over a complete $V$-algebra. Let $\wt{\Lambda}$ be a lift to $V$ of a $p$-base for $V/pV$. 
	
	The local ring $R$ is regular if and only if, for sufficiently small cofinite $\wt{\Gamma}\subseteq \wt{\Lambda}$, $\widetilde{\Omega}_{R\tGa|\Z}$ is free of rank $\dim(R)+\log_p[k(R^{\wt{\Gamma}}):k(R^{\wt{\Gamma}})^p]$, where $k(R^{\wt{\Gamma}})$ denotes the residue field of $R^{\wt{\Gamma}}$.
\end{theoremc}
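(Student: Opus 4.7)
The plan is to deduce Theorem~C from Theorem~B by performing the $\wt{\Gamma}$-construction to pass from $R$ to an F-finite avatar $R^{\wt{\Gamma}}$, applying Theorem~B there, and then transferring the conclusion back along the faithfully flat extension $R\to R^{\wt{\Gamma}}$. This parallels the familiar strategy in equal characteristic $p>0$, where the classical $\Gamma$-construction is used to remove F-finiteness hypotheses from Kunz-type statements; here we use the mixed characteristic analogue developed in Section~\ref{sec:Gamma}.

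The first step is to check that, for every sufficiently small cofinite $\wt{\Gamma}\subseteq \wt{\Lambda}$, the ring $R^{\wt{\Gamma}}$ satisfies the hypotheses of Theorem~B. By construction, $V^{\wt{\Gamma}}$ is a discrete valuation ring with uniformizer $p$ whose residue field is F-finite, and $R^{\wt{\Gamma}}$ is local, essentially of finite type over a complete $V^{\wt{\Gamma}}$-algebra, with maximal ideal containing $p$ and F-finite residue field $k(R^{\wt{\Gamma}})$. This places $R^{\wt{\Gamma}}$ squarely in the setting of Theorem~B, so $R^{\wt{\Gamma}}$ is regular if and only if $\widetilde{\Omega}_{R^{\wt{\Gamma}}|\Z}$ is free of rank $\dim(R^{\wt{\Gamma}})+\log_p[k(R^{\wt{\Gamma}}):k(R^{\wt{\Gamma}})^p]$.

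The second step is to compare $R$ and $R^{\wt{\Gamma}}$. Since $R\to R^{\wt{\Gamma}}$ is faithfully flat, dimension is preserved: $\dim(R^{\wt{\Gamma}})=\dim(R)$, matching the rank formula in the statement of Theorem~C. The crucial input from Section~\ref{sec:Gamma} is that, for all sufficiently small cofinite $\wt{\Gamma}\subseteq \wt{\Lambda}$, the extension $R\to R^{\wt{\Gamma}}$ preserves regularity, so $R$ is regular if and only if $R^{\wt{\Gamma}}$ is regular. Combining these two observations with the statement of Theorem~B applied to $R^{\wt{\Gamma}}$ yields exactly Theorem~C.

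The main obstacle is the \emph{regularity preservation} step: one must know that $R^{\wt{\Gamma}}$ is regular precisely when $R$ is, at least once $\wt{\Gamma}$ is cofinite enough. One direction (regularity descends along faithfully flat maps) is automatic, but the ascent from $R$ to $R^{\wt{\Gamma}}$ requires that the fibers of $R\to R^{\wt{\Gamma}}$ be sufficiently tame; concretely, one expects that $R^{\wt{\Gamma}}$ is a completion of a filtered colimit of étale-type extensions of $R$ obtained by adjoining $p$-th roots of elements outside $\wt{\Gamma}$, so the fibers are geometrically regular once $\wt{\Gamma}$ is small enough. Granting this (which is the substantive content of the mixed characteristic $\wt{\Gamma}$-construction in Section~\ref{sec:Gamma}), the rest of the argument is the formal reduction outlined above.
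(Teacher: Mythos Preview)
Your proposal is correct and matches the paper's intended argument: Corollary~\ref{cor-thmC} is stated without proof precisely because it follows by combining Theorem~\ref{thm-B} applied to $R^{\wt{\Gamma}}$ with the regularity-preservation result (Theorem~5.6) and the dimension equality from faithful flatness. One minor correction to your heuristic paragraph: the $\wt{\Gamma}$-construction adjoins $p^e$\,th roots of the elements \emph{in} $\wt{\Gamma}$ (not outside it), the resulting extensions are purely inseparable modulo $p$ rather than \'etale-type, and the argument for regularity preservation in the paper splits into a geometric-regularity argument away from $p$ (Proposition~\ref{prop:greg}) and a pure-inseparability argument on $V(p)$---but since you correctly defer this step to Section~\ref{sec:Gamma} as a black box, the inaccuracy does not affect your formal argument.
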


Note that the equal characteristic Jacobian criteria of \cite{BR} and \cite{Nag} are also phrased in terms of cofinite subfields.
		
			A preliminary version of these results, namely Theorem~A in the case of finitely generated unmixed algebras over complete unramified discrete valuation rings with perfect residue fields, was announced in the MSRI Fellowship of the Ring Seminar in May 2020. Our original method was based on the argument in Subsection~\ref{subsec:elem}. These results were extended to the complete case and the essentially-of-finite-type-over-complete case by a N\'eron-Popsecu desingularization \cite{AR,Pop,Swan} argument in an earlier version of this manuscript, along with a version of Theorem~B in the corresponding cases. In the meanwhile, we learned of the work of Saito \cite{Sai}, who established the analogue of Theorem~B with the additional hypotheses that $R/pR$ is essentially of finite type over an F-finite field and that $R$ is flat over $\Z_{(p)}$. After learning of Saito's results, we used some of his ideas to give a different argument for Theorems~A and B that does not require any hypotheses on $V/pV$, and that does not require the use of N\'eron-Popsecu desingularization to address the cases where $R$ is complete or essentially of finite type over a complete ring. We have kept the original argument for the affine case here in Subsection~\ref{subsec:elem} for heuristic reasons.
	
	 We note that the universal perivation modules defined here appear as modules of FW-differentials in \cite{Sai}, as total $p^2$-differentials in \cite{DKRZ}, and implicitly as an extension class in \cite[\S 4.2]{Zda}; they are also closely related to the construction in \cite[9.6.12]{GabRom}.

	 

	\section{p-derivations and perivations}\label{sec:p-derivation}                         

	\subsection{$p$-derivations}
	
We recall first the notions of $p$-derivation and $\delta$-ring, in the sense of Buium \cite{Bui} and Joyal \cite{Joy}.

\begin{notation}
	For a prime integer $p$, we set
	\[ C_p(X,Y):= \frac{X^p + Y^p - (X+Y)^p}{p}= -\sum_{i=1}^{p-1} \frac{\binom{p}{i}}{p} X^i Y^{p-i} \in \Z[X,Y]. \]
		More generally, we set
	\begin{align*} C_p(X_1,\dots,X_t) :=& \ \frac{ X_1^p + \cdots + X_t^p - (X_1+\cdots+X_t)^p}{p} \\ =& \ -\hspace{-5mm}\sum_{\substack{a_1+\cdots+a_t=p \\ a_1,\dots,a_t\neq p}} \frac{\binom{p}{a_1,\dots,a_t}}{p} X_1^{a_1} \cdots X_t^{a_t} \in \Z[X_1,\dots,X_t]. \end{align*}
	For all $p$, we have 
	\begin{equation}\label{eq:Cpideal}\tag{$\clubsuit$}
	C_p(X,Y)\in (XY) \Z[X,Y] \ \text{and} \
C_p(X_1,\dots,X_t)\in (X_1,\dots,X_t)^2  \, \Z[X_1,\dots,X_t]. \end{equation}
\end{notation}

\begin{definition}\label{def:p-deriv}
	Let $R$ be a ring, and $p$ a prime integer. A map $\delta:R\to R$ is a \emph{$p$-derivation} if, for all $x,y\in R$, we have
	\begin{enumerate}
		\item $\delta(0)=\delta(1)=0$,
		\item $\delta(x+y)=\delta(x)+\delta(y)+C_p(x,y)$, and
		\item $\delta(xy)=x^p \delta(y) + y^p \delta(x) + p \delta(x)\delta(y)$.
	\end{enumerate}
	
	A \emph{$\delta$-ring} is a pair $(R,\delta)$, where $R$ is a ring, and $\delta$ is a $p$-derivation on $R$.
\end{definition}

Note that if $\delta$ is a $p$-derivation, then 
\begin{equation}\label{eq:pderadd}\tag{$\diamondsuit$}\delta(x_1+\cdots+x_t)=\delta(x_1)+\cdots+\delta(x_t)+C_p(x_1,\dots,x_t).\end{equation}

If $(R,\delta)$ is a $\delta$-ring, then the map
\[ \Phi:R\to R \qquad  \Phi(r)=r^p+p\delta(r) \]
is a ring homomorphism, and if $p\in R$ is not a unit, then the diagram
\[
\CD
R @>\Phi>> R \\
@VVV @VVV\\
R/pR @>F>> R/pR
\endCD
\]
commutes, where $F$ is the Frobenius map on $R/pR$; we call the map $\Phi$ the \emph{lift of Frobenius} on $(R,\delta)$. Conversely, if $p$ is a nonzerodivisor on $R$, and $\Phi$ is a ring endomorphism of $R$ for which the previous diagram commutes, then the map
\[ \delta:R \to R \qquad \delta(r)=\frac{\Phi(r)-r^p}{p} \]
is a $p$-derivation.

We note a few rings on which $p$-derivations exist (cf., \cite[Proposition~2.6]{DGJ}):
\begin{example}\label{ex:2.3}  
\begin{enumerate}
	\item On $\mathbb{Z}$, for any $p$, the identity map is the unique lift of Frobenius, and $\delta(n)=\frac{n-n^p}{p}$ is the unique $p$-derivation.
	\item Similarly, on the ring of $p$-adic integers $V=\widehat{\mathbb{Z}_{(p)}}$, the identity map is the unique lift of Frobenius, and 
	$\delta(n)=\frac{n-n^p}{p}$ is the unique $p$-derivation.
	\item Generalizing the last example, the ring of Witt vectors $W(k)$ over a field $k$ of positive characteristic admits a $p$-derivation.
	\item Any lift of Frobenius $\Phi$ on a ring $R$ extends to a lift of Frobenius $\tilde{\Phi}$ on $R[x_1,\dots,x_n]$ by setting $\tilde{\Phi}(x_i)=x_i^p + p f_i$ for arbitrary elements ${f_i\in R[x_1,\dots,x_n]}$.
	\item Any lift of Frobenius $\Phi$ on a $p$-adically complete ring $R$ extends to a lift of Frobenius $\tilde{\Phi}$ on $R[[x_1,\dots,x_n]]$ by setting $\tilde{\Phi}(x_i) :=x_i^p + p f_i$ for arbitrary elements $f_i\in R[[x_1,\dots,x_n]]$, since $R[[\vect x n]]$ is complete with respect to $(p,\vect xn)$.
\end{enumerate}
\end{example}


\begin{remark}\label{remark:der.prod} Let $\delta$ be  a $p$-derivation on $R$. 

\begin{enumerate}

\item\label{item:der.prod1} If $\delta$ maps a set of generators of an ideal $\fa$ of $R$ into $\fa' \supseteq \fa$,  then $\delta$ maps
 $\fa$  into $\fa'$. 
 
 \item\label{item:der.prod2} If $\fa$ and $\fb$ are ideals of $R$ such that $\delta$ maps $\fa$ into $\fa' \supseteq \fa$ and $\delta$ maps $\fb$ into $\fb' \supseteq \fb$ then 
 $\delta$ maps $\fa \fb$ into $\fa' \fb'$.
 
 \item\label{item:der.prod3} If $\delta$ maps a set of generators of an ideal $\fa$ of $R$ into $\fa' \supseteq \fa$, then $\delta$ maps 
 $\fa^k$ into ${\fa'}^k$ for every
 positive integer $k$.  

 \item\label{item:der.prod4} If $\fa$ is an ideal containing $p$,  then $\delta$ maps $\fa^{k+1}$ into $\fa^{k}$ for all $k \in \N$.

 \item\label{item:der.prod5} If $\fa \inc \fa'$ are ideals with $\delta(\fa) \inc \fa'$ and $r \equiv s$ modulo $\fa$,  then 
 $\delta(r) \equiv \delta(s)$ modulo $\fa'$.
 
 \item\label{item:der.prod6}  If $p \in \fa$,  $k \in \N$, and $r \equiv s$ modulo $\fa^{k+1}$, then $\delta(r) \equiv \delta(s)$ mod $\fa^k$.  

   \item\label{item:der.prod7} For any element $r \in R$,  $\delta(pr) \equiv  r^{p}$ modulo $pR$.
 
 \item\label{item:der.prod8} For  
 $\vect r n \in R$,  $\delta(r_1 \cdots r_n) \equiv \sum_{i=1}^n (\prod_{j\not= i} r_j^{p})\delta(r_i)$ modulo $pR$.  Hence,
 for any $r \in R$,  $\delta(r^p) \equiv 0$ modulo $pR$.  
 
 \end{enumerate}
 
  \end{remark}
  \begin{proof}
  Parts (1) and (2) are straightforward consequences of \eqref{eq:Cpideal}, \eqref{eq:pderadd}, and part (3) of Definition~\ref{def:p-deriv}. Part (3) follows from (1) and (2) by induction. For (4), we induce on $k$, with the base case $k=1$ trivial. Take a generating set $a_1,\dots,a_t$ for $\fa$ and a generating set $c_1,\dots,c_s$ for $\fa^k$, so $\fa^{k+1}$ is generated by elements of the form $a_i c_j$. Then $\delta(a_i c_j) = a_i^p \delta(c_j) + c_j^p \delta(a_i) +p\delta(a_i)\delta(c_j)\in \fa^k$ by the induction hypothesis. Part (5) is immediate from part (1) of Definition~\ref{def:p-deriv} and \eqref{eq:Cpideal}. Part (6) follows from (4) and (5). Part (7) follows from part (3) of Definition~\ref{def:p-deriv} and $\delta(p)=1-p^{p-1}$. Part (8) follows from  part (3) of Definition~\ref{def:p-deriv} by a straightforward induction on~$n$.  \end{proof}


We refer the reader to \cite[\S 2]{BS} for some basic properties of $\delta$-rings, and \cite[\S 3]{BS} for connections between the theory of $\delta$-rings and perfectoid spaces.

\subsection{Derivations of inseparability}\label{SS-insep}

We briefly recall some basics of derivations in positive characteristic. We refer the reader to \cite[Section~36]{Mat} for the facts below. A \textit{$p$-base} of a field $K$ of positive characteristic is a set of elements $\Lambda\subset K$ such that $\{ \lambda_1 ^{a_1} \cdots \lambda_t^{a_t} \ | \ \lambda_1,\dots,\lambda_t \in \Lambda, 0\leq a_1,\dots,a_t < p\}$ forms a $K^p$-basis for $K$. Such a set exists, by an application of Zorn's Lemma, and the cardinalities of any two $p$-bases for a field $K$ are equal.  

Given a $p$-base $\Lambda$ for a field $K$, one obtains a derivation for each element $\lambda_0\in \Lambda$: explicitly, $\frac{\partial}{\partial \gamma_{\lambda_0}}$ is the $K^p$-linear map that maps $\lambda_0^{a_0} \lambda_1^{a_1} \cdots \lambda_t^{a_t}$ to $a_0 \lambda_0^{a_0-1} \lambda_1^{a_1} \cdots \lambda_t^{a_t}$ for all $\lambda_1,\dots,\lambda_t \in \Lambda\smallsetminus\{\lambda_0\}$, $0\leq a_0,a_1,\dots,a_t < p$.

The module of differentials of $K$ has a $K$-vector space basis $\{d\lambda \ | \ \lambda\in \Lambda\}$ such that the dual element in $\Hom_K(\Omega_K,K)$ to $d\lambda_0$ is $\frac{\partial}{\partial \gamma_{\lambda_0}}$.
	
	\subsection{$p$-derivations modulo $p^2$}
	
	\begin{definition}
		We say that a map $\delta:R/p^2 R \to R/pR$ is a \emph{$p$-derivation mod $p^2$} on $R$ if $\delta$ satisfies the axioms (1)-(3) of Definition~\ref{def:p-deriv}.
		
		A \textit{mod $p^2$ $\delta$-ring} is a pair $(A,\delta)$, where $\delta$ is a $p$-derivation mod $p^2$ on $A$.
	\end{definition}

We note that the conclusions of Remark~\ref{remark:der.prod} 
also hold for a $p$-derivation mod $p^2$, since we may apply these statements to $R/p^2 R$.
	
If $R/p^2 R$ is flat over $\Z/p^2 \Z$, e.g., if $p$ is a regular element on $R$, then $p$-derivations mod $p^2$ are in bijection with lifts of Frobenius to $R/p^2R$.
	By Remark~\ref{remark:der.prod}, any $p$-derivation induces a $p$-derivation mod $p^2$. However, existence of a $p$-derivation mod $p^2$ is less stringent than existence of a $p$-derivation. This follows from the following lemma and subsequent example.
	
	\begin{lemma}
		Let $(A,\delta)$ be a $\delta$-ring, with $p$ a regular element on $A$, and let $S$ be a smooth $A$-algebra.  Then there exists a $p$-derivation mod $p^2$ on $S$ that extends $\delta$.
	\end{lemma}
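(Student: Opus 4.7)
The plan is to produce a lift of Frobenius $\Phi_S$ on $S/p^2 S$ compatible with the existing lift on $A$, and then translate it back to a $p$-derivation mod $p^2$. Since $p$ is a nonzerodivisor on $A$, the $p$-derivation $\delta$ corresponds to a ring endomorphism $\Phi_A : A \to A$ with $\Phi_A(a) = a^p + p\delta(a)$, and reducing modulo $p^2$ yields $\bar\Phi_A : A/p^2 A \to A/p^2 A$. Writing $R := A/p^2 A$ and $T := S/p^2 S$, the algebra $T$ is smooth over $R$, since smoothness is preserved under base change.

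Next, I would twist the $R$-structure on $T$ via $\bar\Phi_A$ to form an $R$-algebra $T^{(\Phi)}$: the same underlying ring, with structure map precomposed with $\bar\Phi_A$. An $R$-algebra homomorphism $T \to T^{(\Phi)}$ is precisely a ring endomorphism of $T$ whose restriction to $A/p^2 A$ agrees with $\bar\Phi_A$. Because $\bar\Phi_A$ reduces to the Frobenius on $A/pA$, the ordinary Frobenius on $T/pT$ defines an $R$-algebra map $F : T \to T^{(\Phi)}/pT^{(\Phi)}$. The surjection $T^{(\Phi)} \twoheadrightarrow T^{(\Phi)}/pT^{(\Phi)}$ is a square-zero extension of $R$-algebras, since the ideal $pT$ has square zero in $T$. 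Formal smoothness of $T$ over $R$ therefore lifts $F$ to an $R$-algebra map $\Phi_S : T \to T^{(\Phi)}$, that is, a ring endomorphism of $S/p^2 S$ that lifts Frobenius on $S/pS$ and restricts to $\bar\Phi_A$ on $A/p^2 A$.

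To convert $\Phi_S$ back to a $p$-derivation mod $p^2$, I would observe that $S$ is $A$-flat (smooth implies flat) and $p$ is $A$-regular, so $p$ is regular on $S$; hence multiplication by $p$ yields an isomorphism $S/pS \xrightarrow{\sim} pS/p^2 S$. For $\bar s \in S/p^2 S$, the element $\Phi_S(\bar s) - \bar s^{\,p}$ lies in $pS/p^2 S$, so $\delta_S(\bar s) \in S/pS$ can be defined uniquely by the relation $p \, \delta_S(\bar s) = \Phi_S(\bar s) - \bar s^{\,p}$. The fact that $\Phi_S$ is a ring homomorphism lifting Frobenius translates, via the defining identity $p\, C_p(X,Y) = X^p + Y^p - (X+Y)^p$ and its multiplicative analogue, into the axioms of Definition~\ref{def:p-deriv} for $\delta_S$; the compatibility $\Phi_S|_R = \bar\Phi_A$ gives that $\delta_S$ extends $\delta$.

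The only substantive step is the infinitesimal lifting in the middle paragraph; everything else is the standard dictionary between lifts of Frobenius and $p$-derivations. I expect the main obstacle to lie in setting up the bookkeeping around the twisted $R$-structure on $T^{(\Phi)}$: phrasing the desired $\Phi_S$ literally as the lift of an $R$-algebra map through a square-zero extension of $R$-algebras is exactly what makes formal smoothness of $T$ over $R$ directly applicable.
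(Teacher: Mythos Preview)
Your proof is correct and takes essentially the same approach as the paper: both use the infinitesimal lifting property of smoothness to lift the Frobenius on $S/pS$ through the square-zero surjection $S/p^2S \twoheadrightarrow S/pS$, compatibly with the given lift $\Phi_A$ on $A$. The paper states this lifting in a single commutative diagram using smoothness of $S$ over $A$ directly, while you first base-change to $R=A/p^2A$ and phrase it via the twisted $R$-algebra $T^{(\Phi)}$; your version is more explicit about the bookkeeping and the conversion back to a $p$-derivation, but the substance is the same.
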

	\begin{proof}
		Let $\Psi$ be the lift of Frobenius associated to $A$. There is a commutative diagram
		\[
		\CD
		S @>>> S/pS @>{F}>> S/pS \\
		@AAA @. @AAA  \\
		A @>{\Psi}>> A @>>> S/p^2S. \\
		\endCD
		\]
		By smoothness, there is a ring homomorphism $S\to S/p^2 S$ that makes the diagram commute; such a map is a lift of Frobenius mod $p^2$.
	\end{proof}
	
	On the other hand, we note:

		\begin{example}
			Let \[V=\widehat{\mathbb{Z}_{(p)}}, \ p \not= 2,\,3, \ A=V[y] ,  \ \text{and} \ 
			R=\frac{A[x]_{2x+y}}{(x^2 + yx + p)}.\] Note that $R$ is a standard \'etale extension of~$A$.
			To extend the $p$-derivation $\delta$ on~$A$ that sends $y$ to $0$, we need $g\in R$  
			  such that $x\mapsto x^p + pg, y\mapsto y^p$ yields a $V$-algebra homomorphism.
			For this, \ we need  $(x^p + pg)^2 + y^p(x^p + pg) +p = 0$  in $R$.
That is, $g$ must be a solution of	\[p^2g^2 +(2px^p + py^p)g  + (x^{2p} + y^px^p + p) = 0\]
			in $R$. In the fraction field of $R$, we may apply the quadratic formula to obtain
			\[g = \frac{-(2px^p + py^p) \pm p\sqrt{y^{2p} - 4p}  }{2p^2}
                 	= \frac{-(2x^p + y^p) \pm \sqrt{y^{2p} - 4p}}{2p}.\] 
Since $g$ must be in $R$,   $\sqrt{y^{2p} - 4p} \in R$.  We will show that this is false.  First note
that $B = A[x]/(x^2+yx+p) = A[\theta]$ where $\theta =(-y + \sigma)/2$  and $\sigma$ is
a square root of $y^2-4p$.  Then $B= A[\sigma]$, since 2 is a unit in $V$.  Since $A$ is a UFD and $y^2-4p$
is squarefree in $A$,  $B$ is normal.  Hence, if $y^{2p} - 4p$ has a square root in the fraction field of $B$ (which
contains $R$), it has a square root in $B$,  and this will have the form $a_0 + a_1\sigma$, with each $a_i \in A$. 
Since $B=A\oplus A\sigma$, it is clear that $(a_0 + a_1\sigma)^2$ is not in $B$ unless $a_0 = 0$ or $a_1=0$.  We cannot have $a_0 = 0$,
since  $y^{2p} - 4p$ is not a square in $A$.  We cannot have $a_0 = 0$, or  else  $y^{2p}-4p = a_1^2(y^2-4p)$.
This is a contradiction, since substituting $y^2 = 4p$  in $y^{2p}-4p$ does not yield 0: its value is
$(4p)^p-4p \neq 0$.  We conclude that no such $g$ exists, and hence no such extension of $\delta$ exists.
		\end{example}
	
We now aim to show that any discrete valuation ring with uniformizer $p$ admits a $p$-derivation modulo $p^2$. Our construction is a variation on Saito \cite[Lemma~1.3.1]{Sai} in the case of a perfect residue field. We require a lemma first.
	
	\begin{lemma}
		For any ring $R$, prime integer $p$, and elements $x,y\in R$, \[(x+y)^{p^2} + p C_p(x,y)^p \equiv x^{p^2} + y^{p^2} \ \mathrm{mod} \ p^2R.\]
	\end{lemma}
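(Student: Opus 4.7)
The plan is to start from the defining identity $p\,C_p(x,y)=x^p+y^p-(x+y)^p$, which gives the rewrite
\[
(x+y)^p = x^p+y^p - p\,C_p(x,y).
\]
Raising to the $p$-th power and expanding by the binomial theorem, every term involving $\bigl(-p\,C_p(x,y)\bigr)^k$ with $k\geq 2$ is divisible by $p^2$, and the $k=1$ term $p(x^p+y^p)^{p-1}\cdot(-p\,C_p(x,y))$ is also divisible by $p^2$. Hence
\[
(x+y)^{p^2}\equiv (x^p+y^p)^p \pmod{p^2 R}.
\]

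Next I would apply the same identity with $x^p,y^p$ in place of $x,y$ to get
\[
(x^p+y^p)^p = x^{p^2}+y^{p^2} - p\,C_p(x^p,y^p).
\]
So it remains to compare $p\,C_p(x^p,y^p)$ with $p\,C_p(x,y)^p$ modulo $p^2R$, which amounts to showing
\[
C_p(x^p,y^p)\equiv C_p(x,y)^p \pmod{p R}.
\]
This is the Frobenius/Freshman's-dream observation: since $C_p$ has integer coefficients, working in $R/pR$ we may apply the Frobenius endomorphism, which fixes $\mathbb{F}_p$ and commutes with addition and multiplication, so $C_p(x,y)^p = C_p(x^p,y^p)$ in $R/pR$. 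Multiplying this congruence by $p$ and combining with the two displays above yields the desired identity.

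I don't anticipate a genuine obstacle here; the only subtlety is the bookkeeping in the binomial expansion step, where one must verify that the $k=1$ term really does contribute a second factor of $p$ (and not merely the one from $(-pC_p(x,y))^1$). The rest is essentially the observation that mod $p^2$, the $p$-th power on $R$ only ``sees'' the target modulo $p$, so the Frobenius identity on $C_p$ transfers cleanly.
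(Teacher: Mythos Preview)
Your argument is correct. Both proofs converge on the same final step---the Frobenius observation that $C_p(x^p,y^p)\equiv C_p(x,y)^p$ modulo $pR$---but they reach the intermediate congruence $(x+y)^{p^2}\equiv x^{p^2}+y^{p^2}-p\,C_p(x^p,y^p)$ modulo $p^2R$ by different means. The paper expands $(x+y)^{p^2}$ directly with the multinomial theorem and then invokes two combinatorial facts: Kummer's theorem to show $p^2\mid\binom{p^2}{i}$ whenever $p\nmid i$, and the congruence $\binom{pn}{pm}\equiv\binom{n}{m}$ modulo $p^2$ to identify the surviving coefficients. You instead factor $(x+y)^{p^2}=\bigl((x+y)^p\bigr)^p$, rewrite the inner $p$-th power via the defining relation for $C_p$, and observe that in the binomial expansion of $(A-pC)^p$ every term except $A^p$ picks up at least two factors of $p$ (one from $\binom{p}{k}$ when $1\le k\le p-1$, and at least one from $(pC)^k$; when $k=p$ the factor $p^p$ already suffices). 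Your route is more elementary---it avoids both external combinatorial inputs---and arguably more conceptual, since it explains the result as an iterated application of the single identity defining $C_p$. The paper's approach, on the other hand, makes the coefficient-level structure of $(x+y)^{p^2}$ modulo $p^2$ completely explicit.
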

\begin{proof}
	We have $(x+y)^{p^2}= \sum_{i+j=p^2} \binom{p^2}{i} x^i y^j$. By Kummer's Theorem \cite{Kum} $p^2$ divides $\binom{p^2}{i}$ unless $p$ divides $i$, so we have  $(x+y)^{p^2} \equiv \sum_{i+j=p} \binom{p^2}{pi} x^{pi} y^{pj} \ \mathrm{mod} \ p^2R.$ We have that  $\binom{pn}{pm} \equiv \binom{n}{m} \mathrm{mod} \ p^2 \Z$, (cf. \cite[Exercise~1.6(c)]{Sta}), so 
	\begin{align*}(x+y)^{p^2} &\equiv \sum_{i+j=p} \binom{p}{i} x^{pi} y^{pj} \\
	&\equiv x^{p^2} + y^{p^2} - p C_p(x^p,y^p)\end{align*} modulo $p^2R.$
\end{proof}
		
		\begin{proposition}\label{prop:p-der-any-V}
			Let $(V,pV,K)$ be a discrete valuation ring with uniformizer $p$. Let $\{\gamma_{\lambda} \ | \ \lambda\in \Lambda\}$ be a set of elements in $V$ that maps bijectively to a $p$-base for $K$. Then the map
			\[ \psi \left( \sum_{\substack{\alpha\in \Z^{\oplus\Lambda}\\ 0\leq \alpha_{\lambda}<p}} a_{\alpha}^p \gamma^{\alpha} +p b \right) = \sum_{\substack{\alpha\in \Z^{\oplus\Lambda}\\ 0\leq \alpha_{\lambda}<p}} a_{\alpha}^{p^2} \gamma^{p\alpha} +p b^p \]
			is a lift of Frobenius modulo $p^2$ on $V$. Hence, the map $\delta: V\to K$ given by $\delta(f)=\frac{\psi(f)-f^p}{p}$ is a $p$-derivation modulo $p^2$ on $V$.
		\end{proposition}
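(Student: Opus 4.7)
The plan is to verify four properties of $\psi$: every element of $V$ admits a decomposition of the stated form; the value $\psi(v)$ modulo $p^2$ is independent of the chosen decomposition; $\psi$ lifts Frobenius modulo $p$; and $\psi$ is a ring homomorphism modulo $p^2$. Existence holds because the residues $\bar\gamma^\alpha$ with $0\le \alpha_\lambda < p$ form a $K^p$-basis of $K$: any $\bar v \in K$ expands (with finite support) as $\sum \bar a_\alpha^p \bar\gamma^\alpha$, and lifting to $V$ absorbs the discrepancy into a term $pb$.

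For well-definedness, if $\sum a_\alpha^p \gamma^\alpha + pb = \sum {a'_\alpha}^p\gamma^\alpha + pb'$, reducing mod $p$ and using $K^p$-linear independence of the basis residues forces $a_\alpha \equiv a'_\alpha \pmod p$, and then $pb \equiv pb' \pmod{p^2}$ forces $b \equiv b' \pmod p$. The key observation is that $x \equiv y \pmod p$ implies $x^p \equiv y^p \pmod{p^2}$ (write $x = y+pz$ and expand), hence also $x^{p^2} \equiv y^{p^2} \pmod{p^3}$. Thus both $\sum a_\alpha^{p^2}\gamma^{p\alpha}$ and $pb^p$ modulo $p^2$ depend only on the residues of the $a_\alpha$ and $b$ modulo $p$, so $\psi(v) \bmod p^2$ is unambiguous. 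The Frobenius-lift property is then immediate from $\psi(v) \equiv \bigl(\sum a_\alpha^p\gamma^\alpha\bigr)^p \equiv v^p \pmod p$.

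The real content is the ring-homomorphism property modulo $p^2$. For additivity, given decompositions $v_i = \sum a_{\alpha,i}^p\gamma^\alpha + pb_i$, the identity $x^p+y^p = (x+y)^p + pC_p(x,y)$ lets me rewrite
\[ v_1+v_2 = \sum (a_{\alpha,1}+a_{\alpha,2})^p \gamma^\alpha + p\Bigl(b_1+b_2+\sum C_p(a_{\alpha,1},a_{\alpha,2})\gamma^\alpha\Bigr). \]
Applying $\psi$ via this (possibly non-canonical) decomposition, the preceding lemma replaces each $(a_{\alpha,1}+a_{\alpha,2})^{p^2}$ by $a_{\alpha,1}^{p^2}+a_{\alpha,2}^{p^2} - p\,C_p(a_{\alpha,1},a_{\alpha,2})^p$ modulo $p^2$, while freshman's dream applied to the outer $p(\cdots)^p$ term yields $p\sum C_p(a_{\alpha,1},a_{\alpha,2})^p\gamma^{p\alpha} + p(b_1^p+b_2^p)$ modulo $p^2$. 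The $C_p^p$-corrections cancel, giving additivity.

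Multiplicativity is the main obstacle, requiring the same ideas with heavier combinatorics. The plan is to expand $v_1v_2$, decompose each exponent $\alpha+\beta = \eta + p\mu$ with $0 \le \eta_\lambda < p$ so as to rewrite $(a_{\alpha,1}a_{\beta,2})^p\gamma^{\alpha+\beta}$ as $(a_{\alpha,1}a_{\beta,2}\gamma^\mu)^p\gamma^\eta$, consolidate $p$-th powers sharing the same $\eta$ via $\sum_i x_i^p = (\sum_i x_i)^p + pC_p(\vect{x}{t})$, and so obtain a standard decomposition $v_1v_2 = \sum_\eta c_\eta^p\gamma^\eta + pD$. Well-definedness legitimizes applying the formula to this decomposition. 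Expanding $\sum_\eta c_\eta^{p^2}\gamma^{p\eta} + pD^p$ modulo $p^2$ via the preceding lemma and freshman's dream, one matches it against $\psi(v_1)\psi(v_2) \equiv \bigl(\sum a_{\alpha,1}^{p^2}\gamma^{p\alpha}\bigr)\bigl(\sum a_{\beta,2}^{p^2}\gamma^{p\beta}\bigr) + p\,(\text{cross $b$-terms}) \pmod{p^2}$. I expect the various $C_p^p$-corrections and cross terms to cancel pairwise, exactly as in the additivity step; the delicate aspect is the bookkeeping, which I would organize by the $\eta$-type of the surviving monomials and by filtration in powers of $p$.
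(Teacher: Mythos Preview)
Your proposal is correct and follows essentially the same approach as the paper: well-definedness via $K^p$-linear independence of the $p$-base monomials and the lifting trick $x\equiv y\pmod p\Rightarrow x^p\equiv y^p\pmod{p^2}$, and additivity via the preceding lemma with the $C_p^p$-cancellation. In fact you go further than the paper does, since the paper simply declares the multiplicativity verification ``straightforward, and omitted,'' whereas your outline (reduce exponents $\alpha+\beta=\eta+p\mu$, consolidate $p$th powers via $C_p$, and match the $C_p^p$-corrections from the lemma against those arising from $pD^p$) is the right bookkeeping and does indeed close up.
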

	\begin{proof}
		First, we show that the map $\psi$ is well-defined. We have that \[V/pV = \bigoplus_{{\alpha\in \Z^{\Lambda}, 0\leq \alpha_{\lambda}<p}} (V/pV)^p \overline{\gamma^{\alpha}},\] so any element in $V$ can be written as a finite sum $\sum a_{\alpha}^p \gamma^{\alpha} +p b$ for some elements $a_{\alpha},b\in V$. If 
			\[\sum_{\substack{\alpha\in \Z^{\oplus\Lambda}\\ 0\leq \alpha_{\lambda}<p}} a_{\alpha}^p \gamma^{\alpha} +p b = \sum_{\substack{\alpha\in \Z^{\oplus\Lambda}\\ 0\leq \alpha_{\lambda}<p}} c_{\alpha}^p \gamma^{\alpha} +p d,\]
			then $\sum_{{\alpha\in \Z^{\oplus\Lambda}, 0\leq \alpha_{\lambda}<p}} (a_{\alpha}^p - c_{\alpha}^p) \gamma^{\alpha} \in pV$, so each $a_{\alpha}^p - c_{\alpha}^p\in pV$ by $K^p$-linear independence of the images of the elements $\gamma_{\lambda}$, and hence $a_{\alpha} - c_{\alpha} \in pV$ by injectivity of the Frobenius on $K$. Substituting back, we have that 
			\[p(b-d)\in \sum_{{\alpha\in \Z^{\oplus\Lambda}, 0\leq \alpha_{\lambda}<p}} ((a_{\alpha}^p + p e_\alpha)^p - a_{\alpha}^p) \gamma^{\alpha}\] for some elements $e_{\alpha}\in V$, and hence this is in $p^2 V$. Since $p$ is a regular element on $V$, $b-d\in pV$ as well. Well-definedness of the map is then evident.
			
			We check now that $\psi$ is additive. We have
			\begin{align*}&\psi\left(\sum a_{\alpha}^p \gamma^{\alpha} +p b + \sum c_{\alpha}^p \gamma^{\alpha} +p d \right)\\ &= 
			\psi\left( \sum \left((a_{\alpha} + c_{\alpha})^p + p C_p(a_{\alpha},c_{\alpha})\right) \gamma^{\alpha} + p (b+d) \right)\\
			&= 	\psi\left( \sum (a_{\alpha} + c_{\alpha})^p \gamma^{\alpha} + p (b+d + \sum C_p(a_{\alpha}, c_{\alpha})\gamma^{\alpha} ) \right)
		\\
		&=\sum (a_\alpha + c_\alpha)^{p^2} \gamma^{p\alpha} + p (b+d+ \sum C_p(a_\alpha,c_\alpha) \gamma^\alpha)^p\\
		&\equiv \sum (a_{\alpha}^{p^2} + c_{\alpha}^{p^2})\gamma^{p\alpha} + p (b^p +d^p)\\
		&= \psi\left(\sum a_{\alpha}^p \gamma^{\alpha} +p b \right) + \psi\left( \sum c_{\alpha}^p \gamma^{\alpha} +p d \right),
			\end{align*}
			where the equivalence in the penultimate line is modulo $p^2V$, using the previous lemma and the fact that $p(X+Y+Z)^p \equiv p(X^p+Y^p+Z^p)$ modulo $p^2$.
			
			The verification of multiplicativity modulo $p^2 V$ is straightforward, and omitted.
	\end{proof}

Combining this with Example~\ref{ex:2.3}, we obtain:

\begin{corollary}\label{cor:p-der-anythingoveranyV}
	Let $V$ be a discrete valuation ring with uniformizer $p\in \Z$. Any polynomial ring in arbitrarily many variables, power series ring in finitely many variables, or polynomial ring over a power series ring in finitely many variables over $V$ admits a $p$-derivation mod~$p^2$. Explicitly, one may take a $p$-derivation mod~$p^2$ as constructed in Proposition~\ref{prop:p-der-any-V} and extend by prescribing arbitrary values to the variables.
\end{corollary}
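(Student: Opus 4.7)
The plan is to assemble this corollary from Proposition~\ref{prop:p-der-any-V} and the extension recipes recorded in Example~\ref{ex:2.3}(4)--(5), working throughout in the mod $p^2$ quotient. The key reduction is that since $p$ is a nonzerodivisor on $V$ and on every polynomial/power series extension of $V$, giving a $p$-derivation mod $p^2$ on a ring $R$ is the same as giving a lift of Frobenius $R/p^2R \to R/p^2R$. Proposition~\ref{prop:p-der-any-V} supplies precisely such a lift $\psi$ on $V/p^2V$.

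For a polynomial ring $T = V[x_i : i \in I]$ in an arbitrary index set, $T/p^2T$ is the polynomial ring $(V/p^2V)[x_i]$ and I would apply Example~\ref{ex:2.3}(4) to extend $\psi$ by the rule $x_i \mapsto x_i^p + p g_i$ for any choice of $g_i \in T/p^2T$. Although Example~\ref{ex:2.3}(4) is stated for finitely many variables, its justification is simply that a ring homomorphism out of a polynomial ring is determined by its values on the generators and that the condition ``reduces to Frobenius mod $p$'' is checked one variable at a time, so the same argument handles arbitrary index sets.

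For a power series ring $V[[\vect xn]]$ in finitely many variables, the mod $p^2$ quotient $(V/p^2V)[[\vect xn]]$ is complete with respect to $(p,\vect xn)$, since its coefficient ring is annihilated by $p^2$. Hence Example~\ref{ex:2.3}(5) applies verbatim to extend $\psi$ by $x_i \mapsto x_i^p + p g_i$ for any $g_i$. For a polynomial ring over a power series ring, apply these two extensions in succession: first pass to $V[[\vect xm]]$ via the power series step, then adjoin the polynomial variables.

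There is no genuinely difficult step; this is assembly. The only point worth flagging is the final assertion of the corollary, that $\delta$ may take arbitrarily prescribed values on the variables. This is immediate because, in each of the extension steps above, the elements $g_i$ are entirely free, and the induced $p$-derivation mod $p^2$ satisfies $\delta(x_i) = (\Phi(x_i) - x_i^p)/p = g_i$ in the residue ring. Thus any assignment of values for $\delta$ on the variables can be realized by the construction.
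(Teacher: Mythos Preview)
Your proof is correct and follows essentially the same route as the paper: use Proposition~\ref{prop:p-der-any-V} to get a lift of Frobenius modulo $p^2$ on $V$, then extend via Example~\ref{ex:2.3}(4)--(5) to power series and polynomial variables. The only cosmetic difference is that the paper first passes to the $p$-adic completion $\widehat{V}$ (so that Example~\ref{ex:2.3}(5) applies literally) and then invokes the canonical isomorphisms $\widehat{V}[[x]][y]/p^i \cong V[[x]][y]/p^i$ for $i=1,2$ to descend, whereas you observe directly that $V/p^2V$ is already $p$-adically complete (since $p^2=0$ there) and so work modulo $p^2$ throughout; these are equivalent packagings of the same observation.
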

 Indeed, by parts~(4) and~(5) of Example~\ref{ex:2.3} and the previous proposition, there is a $p$-derivation modulo $p^2$ on $\widehat{V}[[x]][y]$, for a finite set of variables $x$ and an arbitrary set of variables $y$. Since we have canonical isomorphisms $\widehat{V}[[x]][y]/p^i \widehat{V}[[x]][y] \cong {V}[[x]][y]/p^i {V}[[x]][y]$ for $i=1,2$, such a map can be identified with a $p$-derivation modulo $p^2$ on ${V}[[x]][y]$.

\subsection{Perivations}

In this subsection, we discuss the notion of \emph{perivation}, which is essentially the same as that of \emph{total $p$-derivations} in the sense of Dupuy, Katz, Rabinoff, and Zureick-Brown \cite{DKRZ}, and the notion of FW-derivation of Saito \cite{Sai}.

\begin{definition}[cf. {\cite[Definition~2.1.1]{DKRZ}}]\label{def:peri}
	Let $R$ be a ring, and $p$ a prime integer. Let $M$ be an $R/pR$-module. A map $\alpha:R \to M$ is a \emph{perivation} if for all $x,y\in R$, we have
	\begin{enumerate}
		\item $\alpha(0)=\alpha(1)=0$,
		\item $\alpha(x+y)=\alpha(x)+\alpha(y)+C_p(x,y) \alpha(p)$, and
		\item $\alpha(xy)=x^p \alpha(y) + y^p \alpha(x)$.
	\end{enumerate}
	We call $\alpha(p)$ the \emph{distinguished element} of $\alpha$.
	If additionally, $(A,\delta)$ is a mod $p^2$ $\delta$-ring, with $A$ a subring of $R$, and
	\begin{enumerate}\setcounter{enumi}{3}
		\item\label{condition-standard} $\alpha(a)=\delta(a)\alpha(p)$ \ for all $a\in A$,
	\end{enumerate}
	then we say $\alpha$ is a \emph{perivation over $A$}.
\end{definition}

\begin{example}
	If $(R,\delta)$ is a mod $p^2$  $\delta$-ring, then the composition $R\xrightarrow{\delta} R \twoheadrightarrow R/pR$ is a perivation with distinguished element $1$. More generally, if $M$ is an $R/pR$-module, and $t\in M$, then the map $R\xrightarrow{\delta} R \xrightarrow{\cdot t} M$ is a perivation with distinguished element~$t$;  in this setting, we call a perivation of this form \emph{trivial}. Thus, condition~(\ref{condition-standard}) above says that the restriction of $\alpha$ to $A$ is trivial.
\end{example}

\begin{example}
	Given a ring $R$, consider the ring homomorphism $\beta$ given as the composition $R \twoheadrightarrow R/pR \xrightarrow{F} R/pR$,  where $F$ is the Frobenius map. If $R/pR$ is considered as an $R$-algebra via $\beta$, then any derivation of $R$ into an $R/pR$-module is a perivation with distinguished element zero. Conversely, a perivation to an $(R/pR)$-module with distinguished element zero is a derivation (with respect to $\beta$). We call such a map a \emph{derivation of the Frobenius} for short. We write $\Der^F_{R|A}(M)$ for the collection of $A$-linear derivations of the Frobenius from $R$ into an $(R/pR)$-module $M$.
	
	If $R$ has characteristic $p$, then for any perivation, we have $\alpha(p)=\alpha(0)=0$, so a perivation is just a derivation of the Frobenius.
\end{example}

\begin{remark}
	Any perivation on a ring $R$ is a perivation over $\Z$. Indeed, the unique $p$-derivation on $\Z$ is given by the rule $\delta(n)=(n-n^p)/p$, and by a simple induction, we have $\alpha(n)=n \alpha(1) + C_p(1,\dots,1)\alpha(p)=(n-n^p)/p \cdot \alpha(p)$. Thus, we lose no generality by restriction to the relative setting of perivations over a $\delta$-ring; we may take $\Z$ as the base ring.
\end{remark}

We record a few basic observations on perivations.
\begin{remark}\begin{enumerate}
\item	If $M$ is an $R/pR$-module, then the collection of perivations from $R$ to $M$ admits a natural $R/pR$-module structure by postmultiplication, and likewise for perivations over a mod $p^2$ $\delta$-ring $A$. We denote these modules by $\Per_R(M)$ and $\Per_{R|(A,\delta)}(M)$, respectively.
\item If $\alpha:R\to M$ is a perivation, and $\phi:M\to N$ is an $R$-module homomorphism, then $\phi\circ \alpha$ is a perivation, as is readily verified from the definition.
\end{enumerate}
\end{remark}

\begin{discussion}[$p$-linear maps and the Peskine-Szpiro functor]\label{p-linear}
	Recall that if $R$ is a ring of prime characteristic $p>0$, a map $\eta$ between two $R$-modules $M$ and $N$ is {\emph{$p$-linear}} if it is additive and $\eta(rm)=r^p \eta(m)$ for all $r\in R$ and $m\in M$. If $F:R\to R$ is the Frobenius map on $R$, let ${}^{1}(-):R\mathrm{-mod} \to R\mathrm{-mod}$ be the functor of restriction of scalars through $F$, and $F_R(-):R\mathrm{-mod} \to R\mathrm{-mod}$ be the functor of extension of scalars through $F$.
	
	A map $\eta:M\to N$ is $p$-linear if and only $\eta:M \to {}^{1}N$ is $R$-linear. By Hom-tensor adjunction, there is a natural isomorphism  \begin{align*} \Hom_R(M,{}^{1}N) &\cong \Hom_R(F_R(M),N) \\ \eta &\mapsto \big((r \otimes m) \mapsto r \eta(m) \big);\end{align*}
	thus, we can identify $p$-linear maps from $M$ to $N$ with $R$-linear maps $F_R(M)\to N$. By abuse of notation, we will use the same name for maps we identify in this way.
\end{discussion}

\begin{lemma}\label{peri-factor-I2pI}
	Let $(A,\delta)$ be a mod $p^2$ $\delta$-ring, $R$ be an $A$-algebra, and $I$ be an ideal of $R$. Let $\alpha:R\to M$ be a perivation over $A$.
	\begin{enumerate}
		\item\label{lem-desc1} $\alpha$ descends to a well-defined perivation $\bar{\alpha}: R/(I^2+pI) \to M/IM$.
		\item\label{lem-desc2} The restriction of $\bar{\alpha}$ to $I/(I^2+pI) \to M/IM$ is a $p$-linear map over $R/pR$.
		\item\label{lem-desc3} If $\alpha(I)\subseteq IM$, then $\alpha$ descends to a perivation $\alpha': R/I \to M/IM$.
		\item\label{lem-desc4} If $IM=0$, then there is a natural bijection between perivations from $R/I$ to $M$ and perivations from $R$ to $M$ that map $I$ to $0$.
	\end{enumerate}
\end{lemma}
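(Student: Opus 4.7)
The plan is to prove the four parts in order, with all the real work happening in part (1); the rest will follow quickly.

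For part (1), the main task is to show $\alpha(I^2 + pI) \subseteq IM$. I will first check this on the natural generators of $I^2 + pI$. For $a,b \in I$, the multiplicative axiom gives $\alpha(ab) = a^p\alpha(b) + b^p\alpha(a) \in IM$, since $a^p, b^p \in I$ (using $p \ge 2$). For $c \in I$, the same axiom gives $\alpha(pc) = p^p\alpha(c) + c^p\alpha(p)$, and since $M$ is an $R/pR$-module the first summand vanishes, while the second lies in $IM$. Next, for any $r \in R$ and any such generator $g$, one has $\alpha(rg) = r^p\alpha(g) + g^p\alpha(r) \in IM$ because $\alpha(g) \in IM$ and $g \in I$. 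To pass to arbitrary sums $z = \sum_i r_i g_i$, I will first record the generalized additive formula
\[
\alpha(x_1 + \cdots + x_t) = \sum_{i=1}^t \alpha(x_i) + C_p(x_1,\dots,x_t)\,\alpha(p),
\]
which follows by induction from the two-term axiom using the identity $C_p(x_1+x_2,x_3) + C_p(x_1,x_2) = C_p(x_1,x_2,x_3)$ (an immediate consequence of the definition of $C_p$). Applied to $z$, the sum of individual $\alpha(r_ig_i)$'s lies in $IM$ by the preceding step, and the $C_p$ correction lies in $(r_1g_1,\dots,r_tg_t)^2\,\alpha(p) \subseteq I\cdot M$ by the identity $(\clubsuit)$ since each $r_ig_i \in I$. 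Well-definedness of $\bar\alpha$ then follows in one line: for $x-y \in I^2+pI$, the two-term additive formula writes $\alpha(x) - \alpha(y) = \alpha(x-y) + C_p(y,x-y)\,\alpha(p)$, both terms in $IM$ (the second because $C_p(y,x-y) \in (y(x-y)) \subseteq I$). Verification that $\bar\alpha$ satisfies axioms (1)--(4) of Definition~\ref{def:peri} is then immediate from those axioms for $\alpha$, since each is an identity that descends to the quotients.

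For part (2), I will restrict $\bar\alpha$ to $I/(I^2+pI)$, which is already an $R/pR$-module because $pI \subseteq I^2+pI$. Additivity follows from the two-term formula: for $x,y \in I$, $C_p(x,y) \in (xy) \subseteq I^2 \subseteq I$ by $(\clubsuit)$, so the correction term is killed in $M/IM$. For $p$-linearity, the multiplicative axiom yields $\alpha(rx) = r^p\alpha(x) + x^p\alpha(r)$ for $r \in R$ and $x \in I$; since $p \ge 2$, $x^p \in I$, so $x^p\alpha(r) \in IM$, leaving $\bar\alpha(\bar r\bar x) = \bar r^p\,\bar\alpha(\bar x)$. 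This is well-defined on $\bar r \in R/pR$ because $M/IM$ is a $p$-torsion module and $(r+ps)^p \equiv r^p \pmod{pR}$.

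For part (3), assuming further $\alpha(I) \subseteq IM$, we need $\alpha(x) \equiv \alpha(y) \pmod{IM}$ whenever $x - y \in I$. The same one-line computation $\alpha(x) - \alpha(y) = \alpha(x-y) + C_p(y,x-y)\alpha(p)$ does the job: the first term is in $IM$ by hypothesis, and the second lies in $I\alpha(p) \subseteq IM$ because $C_p(y,x-y) \in (y(x-y)) \subseteq I$. The perivation axioms again descend mechanically.

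For part (4), when $IM = 0$ we have $M/IM = M$, and the condition $\alpha(I) \subseteq IM$ becomes $\alpha(I) = 0$. Part (3) then supplies the assignment $\alpha \mapsto \alpha'$ from perivations $R\to M$ killing $I$ to perivations $R/I \to M$. The inverse sends a perivation $\beta\colon R/I \to M$ to its pullback along $R \twoheadrightarrow R/I$, which is visibly a perivation vanishing on $I$; these two constructions are mutually inverse by inspection. Throughout, the compatibility with the base ring $(A,\delta)$ in axiom (\ref{condition-standard}) of Definition~\ref{def:peri} is preserved because the descent modifies neither the restriction to $A$ nor the distinguished element $\alpha(p)$ (viewed in $M/IM$). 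The only subtle point in the whole argument is the bookkeeping for the $C_p$ correction terms in part (1); once the generalized additive formula and the ideal-theoretic content of $(\clubsuit)$ are in hand, the rest is essentially formal.
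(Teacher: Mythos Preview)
Your proof is correct and follows essentially the same approach as the paper's. The only organizational difference is in part (1): the paper computes $\alpha(r + pa_0 + \sum_i a_ia'_i) \equiv \alpha(r) \pmod{IM}$ directly in one pass, whereas you first establish $\alpha(I^2+pI)\subseteq IM$ on generators and then apply the two-term additive formula; both routes use the same ingredients (the generalized additive formula $(\diamondsuit)$ and the ideal containment $(\clubsuit)$), and parts (2)--(4) are handled identically.
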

\begin{proof}
	For part (\ref{lem-desc1}), let $r\in R$, and $a_0,a_1,\dots,a_t,a'_1,\dots,a'_t\in I$. Then
	\begin{align*} &\alpha(r+pa_0+ \sum_i a_i a'_i) = \alpha(r)+\alpha(pa_0)+ \alpha(\sum_i a_i a'_i) + C_p(r,p a_0,\sum_i a_i a'_i)\alpha(p) 
	\\&= \alpha(r)+\alpha(pa_0)+ \sum_i \alpha( a_i a'_i) + C_p(r,p a_0,\sum_i a_i a'_i)\alpha(p) + C_p(a_1 a'_1,\dots,a_t a'_t) \alpha(p).
	\end{align*}
	Since $C_p(r,p a_0,\sum_i a_i a'_i)$ and $C_p(a_1 a'_1,\dots,a_t a'_t)$ lie in $I+(p)$,  we have, modulo $IM$,
	\begin{align*} \alpha(r+pa_0&+ \sum_i a_i a'_i) \equiv \alpha(r)+\alpha(pa_0)+ \sum_i \alpha(a_i a'_i) \\
	&\equiv \alpha(r)+p^p \alpha(a_0) + a_0^p \alpha(p) + \sum_i {a'}_i^p \alpha(a_i) + \sum_i a_i^p \alpha({a'}_i) \equiv \alpha(r).
	\end{align*}
	For part (\ref{lem-desc2}), we note that
	\[\alpha(r a_0)= r^p \alpha(a_0) + a_0^p \alpha(r) \equiv r^p \alpha(a_0) \quad \text{modulo} \ IM.\]
	Part (\ref{lem-desc3}) is clear. For part (\ref{lem-desc4}), we note first that given a perivation from $R/I$ to $M$, the map from $R$ to $M$ obtained by precomposing with the quotient map is a perivation from $R$ to $M$ that maps $I$ to $0$. Conversely, by part (\ref{lem-desc3}), a perivation from $R$ to an $R/I$-module that sends $I$ to $0$ factors through the quotient map. 
	\qedhere\end{proof}

In particular, any perivation $R\to M$ factors through $R/p^2R$. 

\section{Universal perivation modules}\label{sec:peri}

In this section, we study universal objects for perivations. 

\begin{definition}
Let $(A,\delta)$ be a mod $p^2$ $\delta$-ring, and $R$ be an $A$-algebra.
	We say that a perivation $\alpha:R\to M$ is a \emph{universal perivation over $A$} and $M$ is a \emph{universal perivation module over $A$} if for any perivation $\beta:R\to N$ over $A$ there is a unique $R$-module homomorphism $\phi:M\to N$ such that $\beta=\phi\circ \alpha$. We will write $\tilde{d}_{R|(A,\delta)}: R \to \widetilde{\Omega}_{R|(A,\delta)}$ for a universal perivation.
\end{definition}

In particular, for a universal perivation module $\widetilde{\Omega}_{R|(A,\delta)}$, there are natural isomorphisms $\Per_{R|(A,\delta)}(M) \cong \Hom_{R}(\widetilde{\Omega}_{R|(A,\delta)},M)$ for all $R$-modules $M$.

Universal perivations for $A=\Z$ were constructed in \cite{DKRZ} (where universal perivation modules are called \emph{total $p$-differentials}), and were used there to study existence of $p$-derivations on $\Z/p^2\Z$-algebras. In particular they always exist; a construction for these modules appears below. These modules appear in \cite{Sai} as modules of FW-differentials; many of the properties established below also appear there. They also appear implicitly in the work of Zdanowicz \cite[\S 4.2]{Zda} on existence of lifts of Frobenius / $p$-derivations on $\Z/p^2\Z$-algebras, and are related to a construction of Gabber and Romero  \cite[9.6.12]{GabRom}. 

In this section, we record some basic properties of universal perivations. Many of these results overlap with \cite{DKRZ} and \cite{Sai}.

By uniqueness of representing objects (i.e., Yoneda's Lemma), universal perivation modules are unique up to natural isomorphism. Moreover, if $\phi:R\to S$ is an $A$-algebra homomorphism, and $M$ is an $S/pS$-module, we obtain a map
\begin{align*} \Per_{S|(A,\delta)}(M) &\to \Per_{R|(A,\delta)}(M) \\ \alpha&\mapsto \alpha\circ \phi\end{align*}
which induces the natural map
\[ \widetilde{\Omega}_{R|(A,\delta)} \xrightarrow{\tilde{d}_\phi} \widetilde{\Omega}_{S|(A,\delta)}. \]

\begin{remark}\label{rem:charp} If $R$ has characteristic $p>0$, then $\Per_{R|(A,\delta)}(M) = \Der^F_{R/pR|A/pA}(M)$, and hence
$\widetilde{\Omega}_{R|(A,\delta)} \cong F_{R/pR}(\Omega_{R/pR|A/pA})$.
\end{remark}

We record some analogues of the fundamental sequences for differentials.

\begin{proposition}[First fundamental sequences]\label{fundl-1}
	\begin{enumerate}
		\item Let $(A,\delta)$ be a mod $p^2$ $\delta$-ring. Let $R\xrightarrow{\phi} S$ be an $A$-algebra homomorphism. Then there is a natural exact sequence
		\[ S\otimes_R \widetilde{\Omega}_{R|(A,\delta)} \to \widetilde{\Omega}_{S|(A,\delta)} \to F_{S/pS}(\Omega_{S/pS|R/pR}) \to 0. \]
		\item Let $(A,\delta) \to (B,\varepsilon)$ be a mod $p^2$ $\delta$-ring homomorphism (i.e., a ring homomorphism $\psi$ such that $\psi \circ \delta = \varepsilon \circ \psi$), and $B \to R$ be a ring homomorphism. Then there is a natural exact sequence
		\[ R\otimes_B F_{B/pB}({\Omega}_{B/pB|A/pA}) \to \widetilde{\Omega}_{R|(A,\delta)} \to \widetilde{\Omega}_{R|(B,\varepsilon)} \to 0. \]
	\end{enumerate}	
\end{proposition}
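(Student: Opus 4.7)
The plan is to verify both right-exact sequences via Yoneda: applying $\Hom_S(-,M)$ (respectively $\Hom_R(-,M)$) for arbitrary test modules transforms each sequence into a left-exact sequence of perivation functors, and representability by the universal perivation reduces the proof to unpacking the perivation axioms.

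For part~(1), I would define $u$ via the universal property by $s\otimes \tilde{d}_{R|(A,\delta)}(r)\mapsto s\,\tilde{d}_{S|(A,\delta)}(\phi(r))$, and construct $v$ from the perivation $\alpha:S\to F_{S/pS}(\Omega_{S/pS|R/pR})$ given by $\alpha(s)=1\otimes d(\bar s)$, where $d$ is the universal derivation and $\bar s$ denotes the image in $S/pS$. The Leibniz identity for $\alpha$ uses the twist $1\otimes \bar s\, d(\bar t)=s^p(1\otimes d(\bar t))$ on $F_{S/pS}(-)$, additivity is immediate since $\alpha(p)=1\otimes d(0)=0$ kills the $C_p$ term, and $\alpha|_A=0$ because $d$ annihilates the image of $R/pR$. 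Surjectivity of $v$ is clear from the generation of $\Omega_{S/pS|R/pR}$ by $\{d(\bar s)\}$, and $v\circ u=0$ since $d$ kills $\overline{\phi(R)}$. Dualizing and using Discussion~\ref{p-linear} to identify $\Hom_S(F_{S/pS}(\Omega_{S/pS|R/pR}),M)\cong \Der^F_{S|R}(M)$, the remaining task is exactness of
\[ 0\to \Der^F_{S|R}(M)\to \Per_{S|(A,\delta)}(M)\to \Per_{R|(A,\delta)}(M); \]
the middle exactness is immediate because a perivation over $A$ with $\alpha\circ\phi=0$ satisfies $\alpha(p)=0$ (as $p\in\phi(R)$), so the $C_p$ term drops out of the additive axiom and $\alpha$ becomes an $R$-linear derivation of Frobenius.

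For part~(2), the $\delta$-ring compatibility forces $\tilde{d}_{R|(B,\varepsilon)}$ to remain a perivation over $(A,\delta)$, yielding $v$ by universality. I would construct $u$ from the map
\[ \beta:B\to \widetilde{\Omega}_{R|(A,\delta)},\qquad \beta(b)=\tilde{d}_{R|(A,\delta)}(b)-\varepsilon(b)\,\tilde{d}_{R|(A,\delta)}(p), \]
which is a difference of two perivations over $A$ on $B$ sharing distinguished element $\tilde{d}_{R|(A,\delta)}(p)$, hence has distinguished element $0$; it is therefore a derivation of Frobenius, and $\beta|_A=0$ because $\delta=\varepsilon$ on $A$. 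Factoring through $F_{B/pB}(\Omega_{B/pB|A/pA})$ and extending $R$-linearly produces $u$; the composition $v\circ u$ vanishes since $\tilde{d}_{R|(B,\varepsilon)}(b)=\varepsilon(b)\tilde{d}_{R|(B,\varepsilon)}(p)$ for $b\in B$. Applying $\Hom_R(-,M)$, I would verify left-exactness of
\[ 0\to \Per_{R|(B,\varepsilon)}(M)\to \Per_{R|(A,\delta)}(M)\to \Der^F_{B|A}(M), \]
where the last arrow sends $\alpha\mapsto(\alpha|_B-\varepsilon(-)\alpha(p))$; its kernel is exactly the perivations over $(B,\varepsilon)$ by condition~(4) of Definition~\ref{def:peri}.

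The main subtlety is why $F_{S/pS}$ (and not $\Omega_{S/pS|R/pR}$ itself) appears in the cokernel of~(1): the Leibniz-type axiom $\alpha(xy)=x^p\alpha(y)+y^p\alpha(x)$ involves $p$-th powers, so any linear target must carry the Frobenius twist as in Discussion~\ref{p-linear}. Once this identification is set up correctly, the remaining checks in both parts are direct applications of the perivation axioms and of the compatibility between $\delta$ and $\varepsilon$.
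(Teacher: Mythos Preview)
Your proposal is correct and follows essentially the same approach as the paper: both parts are proved by dualizing into an arbitrary test module and verifying left-exactness of the corresponding sequence of perivation/derivation functors, with the map $\alpha\mapsto\alpha|_B-\varepsilon(-)\alpha(p)$ in part~(2) being exactly the paper's choice. The only difference is cosmetic: the paper invokes Yoneda to obtain the maps and exactness simultaneously, whereas you construct $u$ and $v$ explicitly first and then dualize, making some of your direct checks (surjectivity of $v$, $v\circ u=0$) redundant with the dual left-exactness argument---but nothing is wrong.
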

\begin{proof}
	\begin{enumerate}
		\item First, we claim that there are natural exact sequences
		\[ 0 \to \Der^F_{S/pS|R/pR}(M) \to  \Per_{S|(A,\delta)}(M) \to \Per_{R|(A,\delta)}(M) \]
		for $S/pS$-modules $M$. We have the first inclusion, since $\Der^F_{S/pS|R/pR}(M)$ is naturally isomorphic to $\Der^F_{S|R}(M)$, and the latter naturally embeds as a subset in $\Per_{S|(A,\delta)}(M)$. For exactness in the middle, we need to see that a perivation that vanishes on $R$ is an $R$-linear derivation of the Frobenius. For such a perivation, the distinguished element is $0$ (since $p\in R$), so it is a derivation of Frobenius, and the claim is then clear.
		
		Now, since \[\Hom_S(S\otimes_R \widetilde{\Omega}_{R|(A,\delta)} , M) \cong \Hom_R(\widetilde{\Omega}_{R|(A,\delta)},\Hom_S(S,M)) \cong \Per_{R|(A,\delta)}(M)\] and $\Hom_{S|(A,\delta)}(F_{S/pS}(\Omega_{S/pS|R/pR}),M)\cong \Der^F_{S|R}(M)$ for all $S/pS$-modules~$M$, by Yoneda's Lemma, there is a complex as in the statement, which must be exact.
		\item Along the same lines, it suffices to show that for every $R/pR$-module $M$ we have an exact sequence
		\[ 0 \to \Per_{R|(B,\varepsilon)}(M) \to  \Per_{R|(A,\delta)}(M) \to \Der^F_{B|A}(M). \]
		For the map $\Per_{R|(A,\delta)}(M) \to \Der^F_{B|A}(M)$, we send a perivation $\alpha$ to the map $\beta=\alpha-\alpha(p)\varepsilon$. One verifies immediately that this is an $A$-linear derivation of the Frobenius on $B$, and that $\beta$ is zero if and only if $\alpha$ restricted to $B$ is a trivial perivation.\qedhere
	\end{enumerate}
\end{proof}

\begin{remark}
	As a particular case of the first fundamental sequence, when $R=A$, we have 
	\[ S/pS \to \widetilde{\Omega}_{S|A} \to F_{S/pS}(\Omega_{S/pS|A/pA}) \to 0. \]
	This exact sequence was considered in \cite{DKRZ} and implicitly in \cite{Zda} in connection with existence of lifts of Frobenius / $p$-derivations modulo $p^2$. In particular, this sequence is split exact if and only if $\delta$ extends to a $p$-derivation mod $p^2$ on $R$: such a map is a splitting from $\widetilde{\Omega}_{S|(A,\delta)}\to S/pS$.
\end{remark}

\begin{proposition}[Second fundamental sequence]\label{fundl-2}
	Let $(A,\delta)$ be a mod $p^2$ $\delta$-ring. Let $R$ be an $A$-algebra, and $I$ an ideal of $R$. Then there is a natural exact sequence
	\[ F_{R/(I+pR)}(I/I^2+pI) \to R/I \otimes_R \widetilde{\Omega}_{R|(A,\delta)} \to \widetilde{\Omega}_{(R/I) | (A,\delta)} \to 0. \]
\end{proposition}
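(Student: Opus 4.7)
\medskip

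\noindent\textbf{Proof plan.} The strategy mirrors the proof of Proposition~\ref{fundl-1}: I apply Yoneda's Lemma. It suffices to show that for every $R/(I+pR)$-module $M$, applying $\Hom_{R/(I+pR)}(-,M)$ to the proposed sequence yields a left-exact sequence
\[ 0 \to \Per_{(R/I)|(A,\delta)}(M) \to \Per_{R|(A,\delta)}(M) \xrightarrow{\rho} \Hom^p_{R/(I+pR)}\bigl(I/(I^2+pI),\,M\bigr), \]
where the rightmost term denotes $p$-linear maps, identified with $\Hom_{R/(I+pR)}(F_{R/(I+pR)}(I/(I^2+pI)),M)$ via the Peskine--Szpiro adjunction of Discussion~\ref{p-linear}. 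The middle and right terms are identified with $\Hom$ out of the candidate first two terms of the claimed sequence via the defining universal property of $\widetilde{\Omega}$ and adjunction for $\otimes$.

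\medskip

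To define the map $\rho$, note that since $M$ is an $R/(I+pR)$-module we have $IM = 0$, so by Lemma~\ref{peri-factor-I2pI}(\ref{lem-desc1},\ref{lem-desc2}) any perivation $\alpha: R \to M$ descends to a perivation $\bar\alpha: R/(I^2+pI) \to M$ whose restriction to $I/(I^2+pI)$ is $p$-linear over $R/(I+pR)$; I set $\rho(\alpha)$ to be this restriction. The leftmost map is pullback along the quotient $R \twoheadrightarrow R/I$, which is obviously injective.

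\medskip

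For exactness in the middle, Lemma~\ref{peri-factor-I2pI}(\ref{lem-desc4}) (which applies because $IM=0$) identifies the image of the leftmost map with the set of perivations $\alpha: R \to M$ satisfying $\alpha(I)=0$. So I only need to show that $\rho(\alpha)=0$ if and only if $\alpha|_I = 0$. One direction is tautological. For the other, if $\rho(\alpha)=0$, then for each $i \in I$ we have $\bar\alpha(i + (I^2+pI)) = 0$ in $M$, and by construction of $\bar\alpha$ this equals $\alpha(i)$, so $\alpha|_I = 0$ as required.

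\medskip

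With this left-exact sequence of functors in hand, Yoneda's Lemma yields the claimed right-exact sequence of representing objects. I do not anticipate serious obstacles: the main thing to track is keeping the module structures straight (an $R/(I+pR)$-module versus $R/I$-module versus $R/pR$-module) and confirming that the identification of $p$-linear maps with genuine module homomorphisms out of $F_{R/(I+pR)}(I/(I^2+pI))$ is compatible with the naturality required by Yoneda. All the technical content about when perivations descend to quotients has already been packaged in Lemma~\ref{peri-factor-I2pI}, so this proof is essentially formal.
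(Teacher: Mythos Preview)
Your proposal is correct and follows essentially the same approach as the paper: both reduce to establishing the left-exact sequence of perivation functors for $R/(I+pR)$-modules $M$ and then invoke Yoneda, with Lemma~\ref{peri-factor-I2pI} doing all the work. The only cosmetic difference is that the paper first constructs the leftmost map explicitly (as the $p$-linear map induced by the composition $R \xrightarrow{\tilde d_R} \widetilde{\Omega}_{R|(A,\delta)} \twoheadrightarrow R/I \otimes_R \widetilde{\Omega}_{R|(A,\delta)}$ via Lemma~\ref{peri-factor-I2pI}(\ref{lem-desc2}) and Discussion~\ref{p-linear}) before running the Yoneda argument, whereas you let Yoneda produce the map for you; either way the arguments are the same.
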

\begin{proof}
	For the first map above, we consider the perivation given as the composition:
	\[ R \xrightarrow{\tilde{d}_R} \widetilde{\Omega}_{R|(A,\delta)} \twoheadrightarrow R/I \otimes_R \widetilde{\Omega}_{R|(A,\delta)}; \]
	by Lemma~\ref{peri-factor-I2pI}~(\ref{lem-desc2}), this induces a $p$-linear map from $I/(I^2+pI)$ to $R/I \otimes_R \widetilde{\Omega}_{R|(A,\delta)}$, which by Discussion~\ref{p-linear}, yields the map above.
	
	Observe that if $M$ is an $R/(I+pR)$-module, then \[\Per_{R|(A,\delta)}(M)\cong \Hom_R(\widetilde{\Omega}_{R|(A,\delta)},M)\cong \Hom_R(R/I \otimes_R \widetilde{\Omega}_{R|(A,\delta)},M).\] Thus, along the same lines as the  proof of Proposition~\ref{fundl-1}, it suffices to show that we have natural exact sequences
	\[  0 \to \Per_{R/I |(A,\delta)}(M) \to \Per_{R|(A,\delta)}(M) \to \Hom_{R/I}(F_{R/(I+pR)}(I/(I^2+pI)),M)  \]
	for all $R/(I+pR)$-modules $M$. By Lemma~\ref{peri-factor-I2pI}~(\ref{lem-desc4}), any nonzero perivation of $R/I$ into $M$ pulls back to a nonzero perivation of $R$ into $M$, so exactness holds at the first nonzero spot. A perivation of $R$ into $M$ maps to zero in \[\Hom_{R/I}(F_{R/(I+pR)}(I/(I^2+pI)),M)\] exactly if its restriction to $I$ is zero; by Lemma~\ref{peri-factor-I2pI}~(\ref{lem-desc4}), this happens if and only if it is the image of a perivation from $R/I$ to~$M$.
\end{proof}

See also \cite[Proposition~2.3]{Sai} for a version of a second fundamental sequence. We note that by \cite[Proposition~2.6]{Sai}, if $(R,\m,k)$ is local, with $k$ of characteristic $p>0$, and $A=\Z$, the sequence
\[ F_{k}(\m/\m^2) \to k\otimes_R \widetilde{\Omega}_{R|(\Z,\delta)} \to \widetilde{\Omega}_{k|(\Z,\delta)} \to 0\]
is split-exact over $R/pR$.

Our next goal is to give a construction of universal perivation modules.

\begin{construction}\label{construction-universal}
	Let $(A,\delta)$ be a mod $p^2$ $\delta$-ring. Let $T$ be an $A$-algebra with
	\begin{enumerate}
		\item $\varepsilon$, a $p$-derivation mod $p^2$ on $T$ extending $\delta$, and
		\item $\{ t_\lambda \} \subset T$ such that $\{ d t_\lambda \, | \, \lambda\in\Lambda\}$ form a free basis for $\Omega_{T/pT|A/pA}$.
	\end{enumerate}
	Let $\{\partial_\lambda \, | \, \lambda\in\Lambda\}$ be the dual basis to $\{ d t_\lambda\}$ in $\Der_{T/pT|A/pA}(T/pT)$.

	We set $\widetilde{\Omega}_{T|(A,\delta)}$ to be the free $T/pT$-module with basis $\{\tilde{d}p\} \cup \{\tilde{d}t_{\lambda} \, | \, \lambda\in \Lambda\}$. Let ${\tilde{d}_{T|(A,\delta)}:T \to \widetilde{\Omega}_{T|(A,\delta)}}$ be the perivation given by
	\[	\tilde{d}_{T|(A,\delta)}(t)=\varepsilon(t) \tilde{d}p + \sum_{\lambda\in \Lambda} (\partial_\lambda(t))^p \tilde{d}t_{\lambda}.\]
	Note that since this is a sum of a $p$-derivation mod $p^2$ and derivations of the Frobenius, it is indeed a perivation.
\end{construction}

\begin{proposition}\label{prop-universal-T}
	In the setting of Construction~\ref{construction-universal}, the map $\tilde{d}_{T|(A,\delta)}$ is a universal perivation, and $\widetilde{\Omega}_{T|(A,\delta)}$ is a universal perivation module.
\end{proposition}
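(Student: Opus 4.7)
The plan is to verify the universal property directly. Since $\widetilde{\Omega}_{T|(A,\delta)}$ is free over $T/pT$ on $\{\tilde{d}p\} \cup \{\tilde{d}t_\lambda \mid \lambda \in \Lambda\}$, I can specify any $T/pT$-linear $\phi : \widetilde{\Omega}_{T|(A,\delta)} \to N$ by its values on this basis. Given an arbitrary perivation $\beta : T \to N$ over $A$, I would first compute the value of $\tilde{d}_{T|(A,\delta)}$ on the generators: since $\partial_\lambda(p) = 0$ for all $\lambda$ and $\varepsilon(p) = 1 - p^{p-1} \equiv 1 \pmod{p}$, we have $\tilde{d}_{T|(A,\delta)}(p) = \tilde{d}p$, and, using $(\partial_\mu(t_\lambda))^p = \delta_{\mu,\lambda}$, we get $\tilde{d}_{T|(A,\delta)}(t_\lambda) = \varepsilon(t_\lambda)\,\tilde{d}p + \tilde{d}t_\lambda$. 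For $\phi \circ \tilde{d}_{T|(A,\delta)} = \beta$ to hold at $p$ and at each $t_\lambda$, one is forced to set
\[ \phi(\tilde{d}p) := \beta(p), \qquad \phi(\tilde{d}t_\lambda) := \beta(t_\lambda) - \varepsilon(t_\lambda)\,\beta(p), \]
which proves uniqueness and pins down the candidate $\phi$.

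The heart of the argument is then to show $\phi \circ \tilde{d}_{T|(A,\delta)} = \beta$ on all of $T$. Let $\gamma := \beta - \phi \circ \tilde{d}_{T|(A,\delta)}$. Both constituents are perivations over $A$, and by construction they share the same distinguished element $\beta(p)$. Consequently the $C_p(x,y)\,\alpha(p)$ corrections in axiom~(2) of Definition~\ref{def:peri} cancel in the difference, while the product rule in axiom~(3) is stable under subtraction. Thus $\gamma$ is additive and satisfies $\gamma(xy) = x^p\gamma(y) + y^p\gamma(x)$. Since $\gamma(p) = 0$, condition~(\ref{condition-standard}), together with the fact that $\varepsilon|_A = \delta$, forces $\gamma|_A = 0$. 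Therefore $\gamma$ is an $A$-linear derivation of the Frobenius from $T$ into $N$.

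At this point I invoke Discussion~\ref{p-linear}: under the identification there, $\gamma$ corresponds to a $T/pT$-linear map $F_{T/pT}(\Omega_{T/pT|A/pA}) \to N$. By hypothesis in Construction~\ref{construction-universal}, $\{dt_\lambda\}$ is a free $T/pT$-basis of $\Omega_{T/pT|A/pA}$, so $\{1 \otimes dt_\lambda\}$ generates $F_{T/pT}(\Omega_{T/pT|A/pA})$ and the induced map is determined by its values at the $t_\lambda$, i.e., by $\gamma(t_\lambda)$. Our definition of $\phi$ on the basis element $\tilde{d}t_\lambda$ was calibrated precisely so that $\gamma(t_\lambda) = \beta(t_\lambda) - \varepsilon(t_\lambda)\beta(p) - \phi(\tilde{d}t_\lambda) = 0$; hence the associated $T/pT$-linear map vanishes, so $\gamma \equiv 0$ and $\beta = \phi \circ \tilde{d}_{T|(A,\delta)}$.

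The main conceptual obstacle is the move from ``$\gamma$ is a difference of two perivations over $A$'' to ``$\gamma$ is a derivation of the Frobenius''; once one sees that matching distinguished elements makes the nonlinear $C_p$ corrections cancel, the rest is an application of the known universal property of $\Omega_{T/pT|A/pA}$ combined with the $F_{T/pT}$ adjunction of Discussion~\ref{p-linear}.
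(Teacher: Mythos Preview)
Your proof is correct and follows essentially the same approach as the paper: both reduce to the universal property of $\Omega_{T/pT|A/pA}$ by subtracting a perivation with the same distinguished element (you subtract $\phi\circ\tilde{d}_{T|(A,\delta)}$, the paper subtracts the trivial perivation $\alpha(p)\varepsilon$) to obtain an $A$-linear derivation of the Frobenius, which is then determined by its values on the $t_\lambda$. The only difference is organizational: you construct $\phi$ explicitly and verify the factorization, while the paper phrases the argument as ``any perivation is determined by its values on $p$ and the $t_\lambda$.''
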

\begin{proof}
	For any $T/pT$-module $M$, and any collection $\{m_p\} \cup \{ m_\lambda \,|\, \lambda\in \Lambda\}$ of elements of $M$, there is a unique $T/pT$-module homomorphism $\varphi:\widetilde{\Omega}_{T|(A,\delta)}\to M$ such that the composition $\varphi \circ \tilde{d}_T$ is a perivation that sends $p$ to $m_p$ and $t_\lambda$ to $m_\lambda$. Thus, it suffices to show that any perivation is uniquely determined by its values on $p$ and $\{t_\lambda\,|\, \lambda\in \Lambda\}$.
	
	To see this, let $\alpha$ be a perivation, and let $\beta=\alpha-\alpha(p)\varepsilon$. Then $\beta$ is a derivation of the Frobenius, so we can write $\beta=\gamma \circ d_{T/pT|A/pA}$, where $d_{T/pT|A/pA}$ is the universal derivation $T/pT\to \Omega_{T/pT|A/pA}$, and $\gamma$ is $p$-linear. As $\gamma$ is determined by its values on $\{ d t_\lambda \, | \, \lambda\in\Lambda\}$, $\beta$ is determined by its values on $\{t_\lambda\,|\, \lambda\in \Lambda\}$, as required.
\end{proof}

\begin{remark}\label{rem:conditions-universal} Construction~\ref{construction-universal} applies in the following settings:
	\begin{enumerate}
		\item Let $T=A[\{x_\lambda \,|\, \lambda\in \Lambda\}]$ be a polynomial ring in an arbitrary set of variables over $A$. Then, for $\wt{\Omega}_{T|(A,\delta)}$, we may take $\{t_\lambda\}$ to be the set of variables, and $\{\partial_\lambda\} = \{ \frac{\partial}{\partial x_\lambda}\}$ for $\wt{\Omega}_{T|(A,\delta)}$.
		\item Let $T=V[\{x_\lambda \,|\, \lambda\in \Lambda\}]$ be a polynomial ring in an arbitrary set of variables over a discrete valuation ring $V$ with uniformizer $p$. Then, for $\wt{\Omega}_{T|(\Z,\delta)}$, we may take $\{t_\lambda\}$ to be $S_1 \cup S_2$, where $S_1$ is the set of variables, and $S_2=\{\gamma_\lambda\}$ is a set of elements of $V$ that maps bijectively to a $p$-base for $V/pV$. Then $\{\partial_\lambda\}=D_1 \cup D_2$, where $D_1$ is the set of derivations of the form $\frac{\partial}{\partial x_\lambda}$, and $D_2$ is the set of derivations $\{\frac{\partial}{\partial \gamma_\lambda}\}$ obtained from the $p$-base. 
		\item Let $T=V[[ x_1,\dots, x_n ]] [y_1,\dots,y_m]$ be a polynomial ring over power series ring over a discrete valuation ring with uniformizer $p$. Then, as in the previous case, for $\wt{\Omega}_{T|(\Z,\delta)}$, we may take $\{t_\lambda\}$ to be the union of the variables (both power series and polynomial) and a lift of a $p$-base, and $\{\partial_\lambda\}$ to be the union of the derivations with respect to the variables and the derivations induced by the $p$-base.
	\end{enumerate}
\end{remark}

\begin{construction}\label{construction-general}
	Let $(A,\delta)$ be a $\delta$-ring, and $R$ be an $A$-algebra. Let $T$ be an $A$-algebra with $R=T/I$ for some ideal $I$, and
	\begin{enumerate}
		\item $\varepsilon$, a $p$-derivation mod $p^2$ on $T$ extending $\delta$, and
		\item $\{ t_\lambda \} \subset T$ such that $\{ d t_\lambda \, | \, \lambda\in\Lambda\}$ form a free basis for $\Omega_{T/pT|A/pA}$;
	\end{enumerate}
	such a $T$ always exists by Remark~\ref{rem:conditions-universal}. Let $\{\partial_\lambda \, | \, \lambda\in\Lambda\}$ be the dual basis to $\{ d t_\lambda\}$ in $\Der_{T/pT|A/pA}(T/pT)$.

	We set $\widetilde{\Omega}_{R|(A,\delta)}$ to be the quotient of the free $R/pR$-module $P_R$ with basis ${\{\tilde{d}p\} \cup \{\tilde{d}t_{\lambda} \, | \, \lambda\in \Lambda\}}$ by the
	submodule $N$ of elements of the form \[\{\delta(a) \tilde{d}p + \sum_{\lambda\in \Lambda} (\partial_\lambda(a))^p \tilde{d}t_{\lambda} \ | \ a\in I\};\]
	note that $N$ is generated by the set of elements of this form as $a$ varies though any generating set of $I$.
	
	Let $\tilde{d}_{R|(A,\delta)}:R \to \widetilde{\Omega}_{R|(A,\delta)}$ be the perivation given by
	\[	\tilde{d}_{R|(A,\delta)}(r)=\varepsilon(r) \tilde{d}p + \sum_{\lambda\in \Lambda} (\partial_\lambda(r))^p \tilde{d}t_{\lambda}.\]
\end{construction}

\begin{proposition}\label{prop:pres-uni-peri}
	In the context of Construction~\ref{construction-general}, the map $\tilde{d}_{R|(A,\delta)}$ is a universal perivation, and $\widetilde{\Omega}_{R|(A,\delta)}$ is a universal perivation module.
\end{proposition}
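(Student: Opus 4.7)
My plan is to reduce to the universal property already established for $\widetilde{\Omega}_{T|(A,\delta)}$ in Proposition~\ref{prop-universal-T}, using the fact that $R = T/I$ and that perivations to $R/pR$-modules correspond to perivations on $T$ that kill $I$.

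First I would verify that $\tilde{d}_{R|(A,\delta)}$ is a well-defined perivation. The formula in the construction is nominally in terms of a lift $r \in T$ of an element of $R$, so I must show independence of this choice. For this, I would consider the composition
\[ T \xrightarrow{\tilde{d}_{T|(A,\delta)}} \widetilde{\Omega}_{T|(A,\delta)} \twoheadrightarrow P_R \twoheadrightarrow \widetilde{\Omega}_{R|(A,\delta)}, \]
which is a perivation from $T$ to the $R/pR$-module $\widetilde{\Omega}_{R|(A,\delta)}$. By the generating description of $N$, this perivation sends every element of $I$ to $0$, so by Lemma~\ref{peri-factor-I2pI}~(\ref{lem-desc4}) it factors uniquely through a perivation $R \to \widetilde{\Omega}_{R|(A,\delta)}$, and this factoring is precisely $\tilde{d}_{R|(A,\delta)}$.

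Next I would check the universal property. Given any perivation $\beta : R \to M$ over $A$, pull it back along $\pi : T \twoheadrightarrow R$ to get a perivation $\beta \circ \pi : T \to M$ over $A$ (with $M$ viewed as a $T/pT$-module via $\pi$). By Proposition~\ref{prop-universal-T}, there is a unique $T$-linear homomorphism $\phi : \widetilde{\Omega}_{T|(A,\delta)} \to M$ such that $\phi \circ \tilde{d}_{T|(A,\delta)} = \beta \circ \pi$. Since $I$ acts as $0$ on $M$, $\phi$ descends to an $R/pR$-linear map $\bar{\phi} : P_R \to M$. Because $\beta \circ \pi$ vanishes on $I$, we have $\bar{\phi}(\tilde{d}_T(a)) = \beta(\pi(a)) = 0$ for every $a \in I$; since $N$ is generated by such elements, $\bar{\phi}$ factors through the quotient $\widetilde{\Omega}_{R|(A,\delta)} = P_R/N$, yielding an $R$-linear map $\psi : \widetilde{\Omega}_{R|(A,\delta)} \to M$ with $\psi \circ \tilde{d}_{R|(A,\delta)} = \beta$.

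Finally, for uniqueness, any $R$-linear $\psi' : \widetilde{\Omega}_{R|(A,\delta)} \to M$ with $\psi' \circ \tilde{d}_{R|(A,\delta)} = \beta$ pulls back along the surjection $\widetilde{\Omega}_{T|(A,\delta)} \twoheadrightarrow \widetilde{\Omega}_{R|(A,\delta)}$ to a $T$-linear map whose composition with $\tilde{d}_{T|(A,\delta)}$ equals $\beta \circ \pi$; uniqueness in Proposition~\ref{prop-universal-T} forces this pullback to be $\phi$, and hence $\psi' = \psi$ since the surjection $P_R \twoheadrightarrow \widetilde{\Omega}_{R|(A,\delta)}$ is surjective. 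The main (quite mild) obstacle is the very first step, confirming well-definedness of $\tilde{d}_{R|(A,\delta)}$; everything after is a straightforward application of universal properties and Lemma~\ref{peri-factor-I2pI}.
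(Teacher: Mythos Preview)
Your argument is correct and is essentially the paper's proof, just unpacked. The paper's one-line proof cites Proposition~\ref{prop-universal-T} together with the second fundamental sequence (Proposition~\ref{fundl-2}); the latter identifies $\widetilde{\Omega}_{(T/I)|(A,\delta)}$ with the cokernel of the map $F_{R/pR}(I/(I^2+pI)) \to R\otimes_T \widetilde{\Omega}_{T|(A,\delta)} = P_R$, which is exactly $P_R/N$. Your direct verification of the universal property via Lemma~\ref{peri-factor-I2pI}(\ref{lem-desc4}) is precisely the content of the proof of Proposition~\ref{fundl-2} specialized to this situation, so the two arguments coincide once one unwinds the citation.
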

\begin{proof} Write $\pi:T\to R$ for the quotient map. Let $M$ be an $R/pR$-module, and $\alpha:R\to M$ be a perivation. Then $\alpha \circ \pi$ is a perivation from $T$ to $M$, so there is a unique $T$-linear $\varphi: \widetilde{\Omega}_{T|(A,\delta)} \to M$ such that $\varphi \circ  \tilde{d}_{T|(A,\delta)} = \alpha \circ \pi$. Since, by construction, $\widetilde{\Omega}_{R|(A,\delta)}$ is a quotient of $\widetilde{\Omega}_{T|(A,\delta)}$ and the map $\tilde{d}_{R|(A,\delta)}$  is consistent with $\tilde{d}_{T|(A,\delta)}$ via this quotient, it suffices to show that $\varphi$ factors through $\widetilde{\Omega}_{R|(A,\delta)}$. 

Since $M$ is an $R/pR$-module, $\varphi$ factors through $R\otimes_T \widetilde{\Omega}_{T|(A,\delta)}$. Note that, by construction, $ \widetilde{\Omega}_{R|(A,\delta)}$ is the quotient of $R\otimes_T \widetilde{\Omega}_{T|(A,\delta)}$ by the image of $I$ under $\tilde{d}_{T|(A,\delta)}$. But then, $0 = \alpha \pi (I) = \varphi \tilde{d}_{T|(A,\delta)}(I)$, so $\varphi$ indeed factors through this quotient.
	\end{proof}
	
	We note that in \cite{DKRZ} and \cite{Sai}, the corresponding universal modules are constructed differently, though our construction agrees with those in the case $(A,\delta) = (\Z,\delta)$, since they satisfy the same universal property.

We note also (cf. \cite[Proposition~2.5]{Sai}):
	\begin{lemma}
		Let $(A,\delta)$ be a mod $p^2$ $\delta$-ring.
		\begin{enumerate}
			\item If $R=\varinjlim_\lambda R_\lambda$ is a direct limit of $A$-algebras, then $\widetilde{\Omega}_{R|(A,\delta)} = \varinjlim_\lambda \widetilde{\Omega}_{R_\lambda|(A,\delta)}$.
			\item If $W\subseteq R$ is multiplicatively closed, then $\widetilde{\Omega}_{W^{-1}R|(A,\delta)} = W^{-1}\widetilde{\Omega}_{R|(A,\delta)}$.
		\end{enumerate}
	\end{lemma}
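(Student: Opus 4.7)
The plan is to establish both parts via the universal property of $\wt{\Omega}_{R|(A,\delta)}$ as representing object of the perivation functor $\Per_{R|(A,\delta)}(-)$. By Yoneda's Lemma, it suffices to produce natural bijections of the associated Hom-functors; using representability, this reduces in each case to transferring the corresponding (co)limit behavior from the perivation functor.

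For part (1), the structural maps $R_\lambda \to R$ induce compatible morphisms $\wt{\Omega}_{R_\lambda|(A,\delta)} \to \wt{\Omega}_{R|(A,\delta)}$ and hence a canonical map $\varinjlim_\lambda \wt{\Omega}_{R_\lambda|(A,\delta)} \to \wt{\Omega}_{R|(A,\delta)}$. Using the standard adjunction
\[ \Hom_R(\varinjlim_\lambda N_\lambda, M) \cong \varprojlim_\lambda \Hom_{R_\lambda}(N_\lambda, M) \]
together with representability, showing this map is an isomorphism reduces to showing that restriction induces a bijection $\Per_{R|(A,\delta)}(M) \cong \varprojlim_\lambda \Per_{R_\lambda|(A,\delta)}(M)$ for every $R/pR$-module $M$. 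Any compatible family $(\alpha_\lambda)$ glues to a well-defined set map $\alpha: R \to M$; each axiom of Definition~\ref{def:peri} involves only finitely many elements, all of which lie in a common $R_\lambda$ by filteredness, so the axioms transfer to $\alpha$. Uniqueness is immediate since the images of the $R_\lambda$'s cover $R$.

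For part (2), it similarly suffices to show that restriction along $R \to W^{-1}R$ induces a bijection $\Per_{W^{-1}R|(A,\delta)}(M) \xrightarrow{\sim} \Per_{R|(A,\delta)}(M)$ for every $(W^{-1}R)/p(W^{-1}R)$-module $M$; combining this with the adjunction $\Hom_R(N,M) \cong \Hom_{W^{-1}R}(W^{-1}N, M)$ (valid since $M$ is already a $W^{-1}R$-module) yields the conclusion. For any hypothetical extension $\hat{\alpha}$ on $W^{-1}R$ and elements $r \in R$, $w \in W$, axiom~(3) applied to the identity $w \cdot (r/w) = r$ forces
\[ \hat{\alpha}(r/w) = w^{-p}\bigl(\hat{\alpha}(r) - (r/w)^p \hat{\alpha}(w)\bigr), \]
using that $w^p$ is invertible in the scalar ring of $M$; this simultaneously proves injectivity of restriction and dictates the formula for extending a given perivation $\alpha$ on $R$.

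The main obstacle will be the verification in part (2): one must check that this formula is independent of the choice of fraction representative for $r/w$, and that the extended map satisfies axioms~(1)--(4) of Definition~\ref{def:peri}. Well-definedness follows by clearing denominators in an equality of the form $w''(w' r - w r') = 0$ and applying axiom~(3); the multiplicativity and additivity axioms (the latter carrying the $C_p(x,y)\hat{\alpha}(p)$ correction) are then routine manipulations modeled on the classical extension of a derivation to a localization, and the condition $\hat{\alpha}(a) = \delta(a)\hat{\alpha}(p)$ for $a \in A$ is inherited from $\alpha$ since $A \subseteq R$.
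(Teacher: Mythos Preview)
Your argument for part~(1) is essentially the same as the paper's: both verify that the colimit of the $\wt{\Omega}_{R_\lambda|(A,\delta)}$ carries a universal perivation by checking the axioms of Definition~\ref{def:peri} at the level of a common $R_\lambda$ containing any given finite set of elements.

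For part~(2), your approach is correct but genuinely different from the paper's. The paper first reduces to principal localizations by writing $W^{-1}R=\varinjlim_{f\in W} R_f$ and invoking part~(1); it then uses the second fundamental sequence (Proposition~\ref{fundl-2}) to reduce the statement for $R_f = R[t]/(ft-1)$ to the polynomial case, where the explicit presentation of Construction~\ref{construction-general} allows a direct check. Your route instead establishes directly that every perivation $\alpha:R\to M$ into a $W^{-1}R/p$-module extends uniquely to $W^{-1}R$ via the forced formula $\hat\alpha(r/w)=w^{-p}\bigl(\alpha(r)-(r/w)^p\alpha(w)\bigr)$, and concludes by Yoneda. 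This is more elementary and self-contained---it avoids both the fundamental sequence and the explicit polynomial construction---at the cost of the well-definedness and axiom checks you outline (the additivity check, in particular, uses that $C_p$ is homogeneous of degree~$p$ and that $pM=0$ to kill the discrepancy $(r+s)^p-r^p-s^p$). The paper's route is terser given the machinery already in place; yours is the more portable argument.
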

	\begin{proof}
		\begin{enumerate}
			\item One can construct a universal perivation from $R$ to $\varinjlim_\lambda \widetilde{\Omega}_{R_\lambda|(A,\delta)}$ using the universal properties. Namely, since the direct limits $\varinjlim_\lambda R_\lambda$ and $\varinjlim_\lambda \widetilde{\Omega}_{R_\lambda|(A,\delta)}$ can be computed at the level of sets, there is a canonical function $\varinjlim_\lambda R_\lambda \to \varinjlim_\lambda \widetilde{\Omega}_{R_\lambda|(A,\delta)}$; since any two elements  $x,y\in R$ are in the image of some $R_\lambda$, and the canonical map from $R_\lambda \to \varinjlim_\lambda \widetilde{\Omega}_{R_\lambda|A}$ satisfies the conditions of Definition~\ref{def:peri}, the canonical map $\varinjlim_\lambda R_\lambda \to \varinjlim_\lambda \widetilde{\Omega}_{R_\lambda|(A,\delta)}$ does as well. Now, given a perivation from $R$ to $M$, we obtain a consistent system of perivations from $R_\lambda$ to $M$ which induces a unique consistent system of $R_\lambda$-linear homomorphisms from $\widetilde{\Omega}_{R_\lambda|(A,\delta)}$ to $M$. This induces a unique factorization through $\varinjlim_\lambda \widetilde{\Omega}_{R_\lambda|(A,\delta)}$.
			\item Since we can write $W^{-1}R=\varinjlim_{f\in W} R_f$, it suffices to check for principal localizations. Moreover, using the second fundamental sequence, it suffices to check for the case of a polynomial ring. In this case, we can apply Construction~\ref{construction-general} to the polynomial ring with one extra variable $t$ modulo a relation of the form $ft-1$ and verify directly.
			\qedhere
		\end{enumerate}
	\end{proof}

\begin{remark}
Since formation of $\widetilde{\Omega}$ commutes with localization, Construction~\ref{construction-universal} is valid for localizations of polynomial rings over power series rings over unramified discrete valuation rings.
\end{remark}

	\section{Jacobian criterion}\label{sec:Jac.Crit.fg}           

\subsection{Theorem~A} In this section, we prove Theorem~A from the introduction. We also give a self-contained proof of the analogous criterion for the singular locus among primes that do not contain $p$, which is originally a result of Seydi \cite{Sey}.

	\begin{definition}\label{def-mixed-J}
		Let $(V,pV,K)$ be an unramified discrete valuation ring of mixed characteristic $(0,p)$. Let $T$ be a polynomial ring  over a power series ring over $V$ with $n$ total variables $\vect xn$ of both types combined, and let $\delta$ be a $p$-derivation mod $p^2$ on $T$; cf. Corollary~\ref{cor:p-der-anythingoveranyV}. In particular, we allow $T$ to be a polynomial ring or a power series ring. Fix a sequence of elements $\{ \gamma_\lambda\}$ of $V$ that map bijectively to a $p$-base of $K$.

		For a sequence of elements $\bsf=(f_1,\dots,f_a)$ in $R$, the
		\emph{mixed Jacobian matrix} of $\bsf$ is
	the matrix $\widetilde{J}(\bsf)$ over $T/pT$ with
	\begin{itemize}
		\item rows indexed by $\{f_1,\dots,f_a\}$,
		\item columns indexed by $\{p\} \cup \{x_1,\dots,x_n\} \cup \{ \gamma_\lambda\}$, and
		\item the entry in the row indexed by $f_i$ and column indexed by $g$ is  $\delta(f_i)$ if $g=p$, and
	$( \frac{\partial f_i}{\partial g})^p$ if $g\neq p$.
	\end{itemize} 
The maps $\partial/\partial \gamma_\lambda$ are the derivations discussed in Remark~\ref{rem:conditions-universal}.
	\end{definition}
	
	We note that the terminology ``mixed Jacobian matrix'' is used in the sources \cite{Nag,Sam,Zar} to denote Jacobian matrices in equal characteristic that include derivations of inseparability akin to those discussed in Remark~\ref{rem:conditions-universal}. As the term used in these sources does not seem to be in common usage currently, we use it here for this new notion in mixed characteristic.

\begin{remark}
	If $(V,pV,K)$ is a discrete valuation ring with $K$ perfect, then \[ \widetilde{J}(\bsf) = \begin{bmatrix} \delta(f_1) & (\frac{\partial f_1}{\partial x_1})^p & \cdots & (\frac{\partial f_1}{\partial x_n})^p  \\
	\vdots & \vdots & \ddots & \vdots  \\
	\delta(f_a) & (\frac{\partial f_a}{\partial x_1})^p & \cdots & (\frac{\partial f_a}{\partial x_n})^p\end{bmatrix}.  \]
\end{remark}

\begin{remark} The mixed Jacobian matrix depends on the choice of $\delta$ and the choice of a $p$-base for $K$, although these are suppressed from the notation. However, the cokernel of the matrix $\widetilde{J}(\bsf)$ considered as a matrix over $T/(p,\bsf)T$, is isomorphic to the module $\widetilde{\Omega}_{T|(\Z,\delta)}$ by Construction~\ref{construction-general} and Remark~\ref{rem:conditions-universal}, and thus is independent of these choices. Cf. \cite[Footnote~4]{Nag} for this observation in the equal characteristic case.
	\end{remark}

\begin{definition}	In the same setting as Definition~\ref{def-mixed-J}, we write ${J}(\bsf)$ for the classical Jacobian matrix, with
		\begin{itemize}
		\item rows indexed by $\{f_1,\dots,f_a\}$,
		\item columns indexed by $\{x_1,\dots,x_n\}$, and
		\item the entry in the row indexed by $f_i$ and column indexed by $g$ is 
		$\frac{\partial f_i}{\partial g}$.
	\end{itemize} 
\end{definition}

\begin{discussion}\label{disc-kunz}
	We briefly discuss the notion of module of K\"ahler differentials for quotients of polynomial rings over power series rings that we will need for primes not containing $p$ in Theorem~\ref{thm-A}. We refer the reader to the book of Kunz \cite{Kun} for more details on the notions discussed here; see also \cite{Jia} for a concise treatment of these notions.
	
	Let $(V,pV,K)$ be a complete discrete valuation ring, $S$ a power series ring over $V$, and $T$ a polynomial ring over $S$, and $\p$ a prime ideal of $T$. Then, there is a \textit{universally finite derivation} $\partial: S \to \Omega'_{S|V}$: a derivation such that for any finitely generated $S$-module $M$ and derivation $\zeta$, there is a unique $S$-module homomorphism $\phi$ such that $\zeta = \phi \circ \partial$. There is a \textit{universal extension} $\hat{\partial}: T_\p \to \Omega_{T_\p|\partial}$ of $\partial$: for every $T$-module $M$ and $S$-module homomorphism $\omega: \Omega'_{S|V} \to M$ and derivation $\sigma:T\to M$ such  that the restriction of $\sigma$ to $R$ is $\omega \circ \partial$, $\sigma$ factors through $\hat{\partial}$ compatibly with the other maps.
	
	By \cite[Corollary~4.22]{Kun}, we can identify $\Omega_{T_\p|\partial}$ with $(\Omega_{T|\partial})_\p$, and by \cite[Formula~4.11(b)~and~Example~12.7]{Kun}, we can realize $\Omega_{T|\partial}$ as a free module with basis given by $dx_i$ for each variable $x_i$, and $\hat{\partial}:T \to \Omega_{T|\partial}$ as $\hat{\partial}(f) = \sum_i \frac{\partial f}{\partial i} dx_i$.
\end{discussion}


\begin{lemma}\label{lem:rankimage0} Let $(V,pV,K)$ be a complete discrete valuation ring with uniformizer $p\in \Z$, and $T$ be a polynomial ring  over a power series ring over $V$ in $n$ variables $\vect xn$ total. Let $I$ be an ideal of $R$, with generating set $\bsf=(f_1,\dots,f_a)$, and let $\mathfrak{p}$ be a prime of $R$ containing~$I$, but not containing $p$.
	Then, the $\kappa(\p)$-vector space dimension of the image of $I T_\p$ in $\p T_\p/\p^2 T_\p$ is equal to the rank of the Jacobian matrix ${J}(\bsf)$ considered as a matrix in~$\kappa(\p)$.
\end{lemma}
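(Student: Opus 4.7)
The plan is to reinterpret the rank of $J(\bsf)$ over $\kappa(\p)$ via the universally finite derivation $\hat\partial\colon T\to \Omega_{T|\partial}$ of Discussion~\ref{disc-kunz}, and to compare this to the image of $I$ in $\p T_\p/\p^2 T_\p$ through a factorization. Since $\Omega_{T|\partial}$ is free on $\{dx_1,\dots,dx_n\}$ with $\hat\partial f = \sum_j (\partial f/\partial x_j)\, dx_j$, the $\kappa(\p)$-linear map
\[
 \varphi_\p \colon I\otimes_T \kappa(\p)\ \longrightarrow\ \Omega_{T|\partial}\otimes_T \kappa(\p)\ \cong\ \kappa(\p)^n
\]
induced by $\hat\partial$ has matrix the reduction of $J(\bsf)$, so $\dim_{\kappa(\p)} \mathrm{im}(\varphi_\p)= \mathrm{rank}\bigl(J(\bsf)|_{\kappa(\p)}\bigr)$.

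I would then factor $\varphi_\p$ as
\[
 I\otimes_T \kappa(\p)\ \twoheadrightarrow\ \frac{I T_\p+\p^2 T_\p}{\p^2 T_\p}\ \hookrightarrow\ \frac{\p T_\p}{\p^2 T_\p}\ \xrightarrow{\bar d}\ \Omega_{T|\partial}\otimes_T \kappa(\p),
\]
where the first arrow is the tautological surjection onto the image of $I$ in $\p T_\p/\p^2 T_\p$ (whose $\kappa(\p)$-dimension is precisely the quantity we want to compute), and $\bar d$ is the $\kappa(\p)$-linear map induced by $\hat\partial$ (well-defined by the Leibniz rule $\hat\partial(fg)=f\hat\partial g+g\hat\partial f$, so $\hat\partial(\p^2)\subseteq \p\cdot\Omega_{T|\partial}$). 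So it suffices to prove that $\bar d$ is injective.

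The main obstacle is establishing injectivity of $\bar d\colon \p T_\p/\p^2 T_\p\to \Omega_{T|\partial}\otimes_T \kappa(\p)$. Since $p\notin \p$, the localization $T_\p$ contains $\mathrm{Frac}(V)$, so $\kappa(\p)$ has characteristic zero; the source has $\kappa(\p)$-dimension $d:=\dim T_\p\leq n$ (using regularity of $T$), while the target has dimension $n$. I would obtain injectivity from the second fundamental sequence of universally finite differentials
\[
 \p T_\p/\p^2 T_\p\ \xrightarrow{\bar d}\ \Omega_{T_\p|\partial}\otimes_{T_\p}\kappa(\p)\ \to\ \Omega_{\kappa(\p)|\partial}\ \to\ 0,
\]
which is left-exact when $\kappa(\p)$ is separable over the residue field of the base, and this holds automatically in characteristic zero. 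Here I rely on the fact that $\Omega_{T|\partial}$ localizes properly, as recorded in Discussion~\ref{disc-kunz} via \cite[Proposition 4.22]{Kun}.

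As an alternative route to injectivity, I could pass to the completion $\widehat{T_\p}$: by the Cohen structure theorem it is isomorphic to $L[[v_1,\dots,v_d]]$ for some coefficient field $L\cong\kappa(\p)$ and regular system of parameters $v_1,\dots,v_d$; lifting this system to elements $u_1,\dots,u_d\in T_\p$, one checks directly that $\hat\partial u_1,\dots,\hat\partial u_d$ are $\kappa(\p)$-linearly independent in $\Omega_{T|\partial}\otimes\kappa(\p)$, and since $\dim_{\kappa(\p)}\p T_\p/\p^2 T_\p=d$, a dimension count then forces $\bar d$ to be injective.
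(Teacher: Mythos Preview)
Your proof is correct and follows essentially the same route as the paper. Both arguments reduce the claim to the injectivity of the conormal map $\bar d\colon \p T_\p/\p^2 T_\p \to \Omega_{T_\p|\partial}\otimes_{T_\p}\kappa(\p)$, then identify the image of $I$ under this map with the row space of $J(\bsf)$ over $\kappa(\p)$. The only difference is in how the injectivity is justified: the paper simply cites \cite[Proposition~13.14]{Kun}, whereas you invoke left-exactness of the second fundamental sequence via separability in characteristic zero---which is precisely the content of that proposition. Your alternative completion argument is less crisp (the step ``one checks directly that $\hat\partial u_1,\dots,\hat\partial u_d$ are linearly independent'' essentially presupposes what you want), but since your primary route is sound this does not matter.
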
	
\begin{proof}
In the notation of Discussion~\ref{disc-kunz} by \cite[Proposition~13.14]{Kun}, the map \[\p T_\p/\p^2 T_\p \to \Omega_{T_\p|\partial} \otimes_{T_\p} \kappa(\p)\] induced by $\hat{\partial}$ is injective. Thus,
			\[\dim_{\kappa(\p)} \big(\mathrm{im}(I T_\p \to \p T_\p/\p^2 T_\p)\big) =  \dim_{\kappa(\p)} \big(\mathrm{im}(I T_\p \xrightarrow{\hat{\partial}}  {\Omega}_{T_{\p}|\partial} \otimes_T \kappa(\p))\big).\]
			By Discussion~\ref{disc-kunz}, this image is the $\kappa(\p)$-subspace of $\Omega_{T|\partial} \otimes_{T} \kappa(\p)$ generated by the rows of the classical Jacobian matrix (with free basis for  $\Omega_{T|\partial} \otimes_{T} \kappa(\p)$ as in Discussion~\ref{disc-kunz}).
\end{proof}

Applying the universal perivation modules, we obtain a similar result for primes that contain $p$.

\begin{lemma}\label{lem:rankimagep} Let $(V,pV,K)$ be a discrete valuation ring with uniformizer $p\in \Z$, and $T$ be a polynomial ring  over a power series ring over $V$ in $n$ variables $\vect xn$ total. Let $I$ be an ideal of $R$, with generating set $\bsf=(f_1,\dots,f_a)$, and let $\mathfrak{p}$ be a prime of $R$ containing~$I$ and $p$. Let $\delta$ be a $p$-derivation modulo $p^2$ on $T$; such a $\delta$ exists by Corollary~\ref{cor:p-der-anythingoveranyV}.
Then, the $\kappa(\p)$-vector space dimension of the image of $I T_{\p}$ in $\p T_{\p}/\p^2 T_{\p}$ is equal to the rank of the mixed Jacobian matrix $\widetilde{J}(\bsf)$ considered as a matrix in~$\kappa(\p)$.
\end{lemma}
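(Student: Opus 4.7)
The plan is to use the explicit realization of $\wt\Omega_{T|(\Z,\delta)}$ furnished by Construction~\ref{construction-general} together with Remark~\ref{rem:conditions-universal}(3): under this description, $\wt\Omega_{T|(\Z,\delta)}$ is a free $T/pT$-module on the basis $\{\tilde dp\}\cup\{\tilde dx_i\}\cup\{\tilde d\gamma_\lambda\}$ and the universal perivation $\tilde d$ sends each $f_j$ to precisely the $j$-th row of $\wt J(\bsf)$. After reducing modulo $\p$, the $\kappa(\p)$-span of the vectors $\{\tilde d(f_j)\bmod\p\}$ is the row space of $\wt J(\bsf)\bmod\p$, whose $\kappa(\p)$-dimension equals $\mathrm{rank}_{\kappa(\p)}\wt J(\bsf)$. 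The task then reduces to matching this dimension with $\dim_{\kappa(\p)}\mathrm{im}(I\to\p/\p^2)$.

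To build the bridge, I first localize at $\p$; formation of $\wt\Omega$ commutes with localization, so I may work in the regular local ring $T_\p$ with maximal ideal $\p T_\p$ and residue field $\kappa(\p)$. Applying Lemma~\ref{peri-factor-I2pI}(\ref{lem-desc2}) to the ideal $\p T_\p$, the universal perivation descends to a $p$-linear map
\[ \bar\eta\,\colon\,\p T_\p/\p^2 T_\p\longrightarrow \kappa(\p)\otimes_{T_\p}\wt\Omega_{T_\p|(\Z,\delta)} \]
(using $p\p T_\p\subseteq\p^2 T_\p$, which holds since $p\in\p$), or equivalently, via Discussion~\ref{p-linear}, a $\kappa(\p)$-linear map $\tilde\eta\colon F_{\kappa(\p)}(\p T_\p/\p^2 T_\p)\to \kappa(\p)\otimes_{T_\p}\wt\Omega_{T_\p|(\Z,\delta)}$; this is precisely the leftmost map of the second fundamental sequence (Proposition~\ref{fundl-2}) associated to the quotient $T_\p\twoheadrightarrow\kappa(\p)$. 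If $V:=\mathrm{im}(I\to\p/\p^2)$ denotes the $\kappa(\p)$-subspace spanned by the images of the $f_j$, then the $\kappa(\p)$-span of $\bar\eta(V)$ is visibly the row space of $\wt J(\bsf)\bmod\p$ described in the first paragraph.

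The main obstacle, and the one place requiring genuine input, is the injectivity of $\tilde\eta$. For this I plan to invoke the split-exactness of the second fundamental sequence at the residue field, observed immediately after Proposition~\ref{fundl-2} (Saito, \cite[Proposition~2.4]{Sai}), which applies because $T_\p$ is local with $p$ in its maximal ideal and residue field of characteristic $p$. Granting this, $\tilde\eta$ restricted to $F_{\kappa(\p)}(V)$ remains injective, giving
\[ \dim_{\kappa(\p)}\bigl(\kappa(\p)\text{-span of }\bar\eta(V)\bigr)\;=\;\dim_{\kappa(\p)}F_{\kappa(\p)}(V)\;=\;\dim_{\kappa(\p)}V, \]
and combining with the first paragraph delivers the stated equality.
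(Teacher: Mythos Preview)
Your proof is correct and follows essentially the same approach as the paper: both localize at $\p$, invoke Saito's result \cite[Proposition~2.4]{Sai} for the injectivity of the map $F_{\kappa(\p)}(\p/\p^2)\to\widetilde{\Omega}_{T_\p|(\Z,\delta)}\otimes_T\kappa(\p)$ from the second fundamental sequence, and then identify the image of $I$ under this composite with the row space of $\widetilde{J}(\bsf)$ via Construction~\ref{construction-universal}. Your write-up is slightly more explicit in tracking the $p$-linear/linear adjunction through Discussion~\ref{p-linear} and Lemma~\ref{peri-factor-I2pI}, but the argument is the same.
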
	
\begin{proof}
By \cite[Proposition~2.6]{Sai}, the map $F_{\kappa(\p)}(\p T_{\p}/\p^2 T_{\p})  \to \widetilde{\Omega}_{T_\p|(\Z,\delta)} \otimes_T \kappa(\p)$ induced by the universal perivation on $T_\p$ is injective. 

		\begin{align*} \dim_{\kappa(\p)}& \big(\mathrm{im}(I T_\p \to \p T_\p /\p^2 T_\p )\big) \\&=  \dim_{\kappa(\p)} \big(\mathrm{im}( I T_\p/ (\p^2 T_\p \cap  I T_\p) \to \p T_\p/\p^2  T_\p)\big) \\ &=  \dim_{\kappa(\p)} \big(\mathrm{im}(F_{\kappa(\p)}(I T_\p/ (\p^2  T_\p \cap I T_\p)) \to F_{\kappa(\p)}(\p T_\p/\p^2 T_\p))\big)  
		\\&= \dim_{\kappa(\p)} \big(\mathrm{im}(F_{\kappa(\p)}(I T_\p/ (\p^2  T_\p \cap I T_\p)) \to  \widetilde{\Omega}_{T_\p|\Z} \otimes_{T_\p} \kappa(\p))\big),
		\end{align*}
		where the map on the second line is induced from the previous map  by applying the Frobenius functor, and the last map is obtained from the previous map by postcomposing with the map $\rho$. It remains only to note that, by Construction~\ref{construction-universal}, the image of this map is the ${\kappa(\p)}$-subspace of $\widetilde{\Omega}_{T_\p|\Z} \otimes_{T_\p} \kappa(\p)$ generated by the rows of the mixed Jacobian matrix (given a compatible choice of free basis for $\Omega_{T/pT|\Z/p\Z}$ in Construction~\ref{construction-universal} and $p$-base in  Definition~\ref{def-mixed-J}).
\end{proof}

We will also need the following fact.

\begin{proposition}\label{prop:greg} Let $V \to W$ be a local map of discrete valuation rings of mixed characteristic $p$, and let $B$ denote a local $V$-algebra such that
	$B$ has maximal ideal $pB +\fb$ and is complete with respect to $\fb$. Let $C = W \wh{\otimes}^{\fb}_V B$ be the completion of $W \otimes_V B$ with respect to the image of $\fb$. Then $C[1/p]$ is geometrically regular over $B[1/p]$.
	In particular, if $\vect x d$ are indeterminates, $V[[\vect x d]][1/p] \to W[[\vect x d]][1/p]$ is geometrically regular. \end{proposition}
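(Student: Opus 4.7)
The plan is to factor the map $B \to C$ as $B \to D \to C$ with $D := W \otimes_V B$ and $C = \wh{D}^{\fb D}$, and verify geometric regularity of each factor after inverting $p$.

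For the base-change step $B[1/p] \to D[1/p]$, since $V \to W$ is flat (a local homomorphism of DVRs is flat), its base change $B \to D$ is flat as well, and inverting $p$ preserves flatness. For a prime $\p$ of $B[1/p]$, the fiber of $D$ is $\kappa(\p) \otimes_V W \cong \kappa(\p) \otimes_{V[1/p]} W[1/p]$, the base change of the characteristic-zero field extension $V[1/p] \subseteq W[1/p]$. Since every extension of characteristic-zero fields is separable, this tensor product is geometrically regular over $\kappa(\p)$. Hence $B[1/p] \to D[1/p]$ is geometrically regular.

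For the completion step $D[1/p] \to C[1/p]$, I plan to check geometric regularity fiber by fiber. If $\p$ is a prime of $D$ containing $\fb D$, then since $C/\fb C \cong D/\fb D$ by the defining property of $\fb D$-adic completion, the fiber $C \otimes_D \kappa(\p)$ agrees with that of $D$ over $\p$, which is already handled. For $\p$ not containing $\fb D$, the plan is to invoke a Grothendieck--Andr\'e style theorem: for a Noetherian G-ring $A$ and ideal $I$, the map $A \to \wh{A}^I$ is regular. To apply this despite the possible non-Noetherianness of $D$, I would write $W = \varinjlim_\alpha W_\alpha$ as a filtered colimit of essentially-of-finite-type $V$-subalgebras, so that each $W_\alpha \otimes_V B$ is Noetherian. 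The key technical input is excellence of each $W_\alpha \otimes_V B$ (coming from excellence of power series rings over complete DVRs), making it a G-ring so that its $\fb$-adic completion is regular. Passing to the colimit and invoking N\'eron--Popescu desingularization to express $C$ as a filtered colimit of smooth $D$-algebras after inverting $p$ yields the conclusion. For the ``in particular'' case $B = V[[x_1,\dots,x_d]]$, $\fb = (x_1,\dots,x_d)$, one can bypass some of this machinery: after reducing to the case $V$ complete (via a further completion which is itself a regular map in characteristic zero), the ring $V[[x_1,\dots,x_d]]$ is excellent, and standard excellence theory immediately applies.

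The main obstacle is controlling the completion step in the non-Noetherian setting, particularly handling primes $\p$ not containing $\fb D$. I expect the bulk of the work will lie in carefully exchanging the operations of completion, localization at $p$, and filtered colimit, and in verifying that the excellence/G-ring hypotheses are preserved under each reduction.
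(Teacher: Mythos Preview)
Your factorization $B \to D \to C$ with $D = W \otimes_V B$ is natural, but the completion step has a genuine gap: the ring $D$ is typically \emph{not Noetherian} (there is no finiteness hypothesis on $W$ over $V$), so the standard results you invoke---flatness of $I$-adic completion, regularity of formal fibers for G-rings---do not apply. Your proposed workaround of writing $W = \varinjlim W_\alpha$ does not help, because completion does not commute with filtered colimits: even if each $\widehat{D_\alpha}$ is regular over $D_\alpha$, there is no reason $C$ is the colimit of the $\widehat{D_\alpha}$. And invoking N\'eron--Popescu at the end is circular: that theorem takes a regular morphism of Noetherian rings as \emph{input}. Even in the ``in particular'' case, excellence of $V[[x_1,\dots,x_d]]$ says nothing directly about the map $D \to C$, which is the one you need to control.

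The paper's proof takes a completely different and much more elementary route that sidesteps all of this. First, reduce by base change to the case $B = A := V[[x_1,\dots,x_d]]$, where $C = W[[x_1,\dots,x_d]]$; flatness of $A \to C$ is then immediate. Since we are in characteristic zero after inverting $p$, it suffices to show each fiber over a prime $\q \not\ni p$ is regular. Passing to $A/\q$, one realizes this complete local domain as module-finite over a smaller power series ring $A' = V[[x_1,\dots,x_h]]$ (Cohen-style), reducing to the generic fiber of $A' \to W[[x_1,\dots,x_h]]$. But $W[[x_1,\dots,x_h]]$ is a \emph{regular ring}, so every localization of it is regular, and we are done. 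The key idea you are missing is: rather than analyzing a delicate completion map, arrange (via module-finite reduction) that the target is already regular, so regularity of fibers is free.
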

\begin{proof}  It suffices to prove geometrically regularity when $B$ has the special form $A =V[[\vect x d]]$: one may then get the  general case by a base change
	from a suitable choice of $A$ to $B$.  Flatness is obvious in the case of $A$,  even if we do not localize at $p$.  We are in equal characteristic 0, and so
	it suffices to show that the fiber of $B \to C$ over a prime $\q$ not containing $p$ is regular.  Thus, we may replace $B$ by $B/\q$,  and we only need to 
	consider the generic fiber.  We change notation and use $B$ for the base domain.  We may extend $p$ to a system of parameters $\vect u h$ for
	$\m_B$ where the $u_j \in \fb$.  Then we may map a new choice of $A$, namely, $V[[\vect xh]]$, to $B$ as an $V$-algebra so that
	$\vect x h$ map to $\vect u h$, using the completeness of $B$ with respect to $\fb$.  Then $B$ is module-finite over this choice of $A$,
	and the problem of proving geometric regularity reduces to checking regularity of the generic fiber for this choice of $A$, i.e., for
	$V[[\vect x h]] \to W[[\vect x h]]$.  This is obvious since the target ring is now regular. \end{proof}

Part (1) of the following Theorem is originally due to Nagata \cite{Nag2} in the affine case and to Seydi \cite{Sey} in the generality below.

\begin{theorem}\label{thm-A} Let $(V,pV,k)$ be a discrete valuation ring  with uniformizer $p$ and $T$ be a polynomial ring  over a power series ring over $V$ in $n$ variables $\vect xn$ total. Let $I=(f_1,\dots,f_a)T$ be an ideal of pure height~$h$ in~$T$, and~$R=T/I$.
	Then, for a prime $\p$ of $R$,	
	\begin{enumerate}
		\item If $p\notin \p$, then $R_{\p}$ is regular if and only if the image of the ideal $I_h({J}(\bsf))$ of $h\times h$ minors of the classical Jacobian matrix ${J}(\bsf)$ is not contained in~$\p$;
		\item If $p\in \p$, then $R_{\p}$ is regular if and only if the image of the ideal $I_h(\widetilde{J}_{T}(\bsf))$ of $h\times h$ minors of the mixed Jacobian matrix $\widetilde{J}(\bsf)$ is not contained in $\p$.
	\end{enumerate}
\end{theorem}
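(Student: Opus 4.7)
The plan is to reduce the statement to counting the rank of the Jacobian-type matrices via the standard short exact sequence for the conormal module, and then apply the two preceding lemmas.

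First, I would observe that $T$ is a regular ring: a power series ring over the DVR $V$ is regular, and polynomial extensions of regular rings are regular. So for every prime $\mathfrak{q}$ of $T$, the localization $T_\mathfrak{q}$ is a regular local ring. Combined with the pure-height hypothesis on $I$ and the fact that a regular ring is universally catenary and Cohen--Macaulay, for a prime $\mathfrak{p}$ of $R$ with preimage $\mathfrak{q}$ in $T$ we get
\[
\dim R_\mathfrak{p} \;=\; \dim T_\mathfrak{q} - h.
\]

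Next, I would use the standard exact sequence of $\kappa(\mathfrak{p})$-vector spaces
\[
0 \;\to\; \mathrm{im}\bigl(I \to \mathfrak{q} T_\mathfrak{q}/\mathfrak{q}^2 T_\mathfrak{q}\bigr) \;\to\; \mathfrak{q} T_\mathfrak{q}/\mathfrak{q}^2 T_\mathfrak{q} \;\to\; \mathfrak{p} R_\mathfrak{p}/\mathfrak{p}^2 R_\mathfrak{p} \;\to\; 0.
\]
Since $T_\mathfrak{q}$ is regular, $\dim_{\kappa(\mathfrak{p})} \mathfrak{q} T_\mathfrak{q}/\mathfrak{q}^2 T_\mathfrak{q} = \dim T_\mathfrak{q}$. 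Writing $e$ for the embedding dimension of $R_\mathfrak{p}$, we get $e = \dim T_\mathfrak{q} - \dim_{\kappa(\mathfrak{p})}\mathrm{im}(I \to \mathfrak{q}/\mathfrak{q}^2)$. Since $e \geq \dim R_\mathfrak{p} = \dim T_\mathfrak{q} - h$ always, the image of $I$ has $\kappa(\mathfrak{p})$-dimension at most $h$, with equality exactly when $e = \dim R_\mathfrak{p}$, i.e.\ when $R_\mathfrak{p}$ is regular.

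Now I would invoke the two preceding lemmas to identify $\dim_{\kappa(\mathfrak{p})}\mathrm{im}(I \to \mathfrak{q}/\mathfrak{q}^2)$ with the rank over $\kappa(\mathfrak{p})$ of the appropriate Jacobian matrix: Lemma~\ref{lem:rankimage0} handles the case $p \notin \mathfrak{p}$ using the universally finite derivation module of Kunz and gives the classical Jacobian matrix $J(\bsf)$; Lemma~\ref{lem:rankimagep} handles the case $p \in \mathfrak{p}$ by passing through the universal perivation module $\widetilde{\Omega}_{T|(\Z,\delta)}$ and gives the mixed Jacobian matrix $\widetilde{J}(\bsf)$.

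Finally, for any matrix $M$ over a field, $\mathrm{rank}(M) \geq h$ if and only if some $h \times h$ minor is nonzero; combined with the bound $\mathrm{rank} \leq h$ established in the second step, this becomes $\mathrm{rank}(M) = h$, which translates into $I_h(M) \not\subseteq \mathfrak{p}$. Splitting into the two cases $p \notin \mathfrak{p}$ and $p \in \mathfrak{p}$ yields the two parts of the theorem. The genuine content sits entirely in Lemmas~\ref{lem:rankimage0} and~\ref{lem:rankimagep}; the main conceptual step here is simply verifying that, via the universal perivation module and Saito's injectivity result, the mixed Jacobian matrix faithfully records the conormal module in the mixed characteristic case. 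Beyond that there is no essential obstacle: the deduction of Theorem~\ref{thm-A} from the lemmas is formal commutative algebra.
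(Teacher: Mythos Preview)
Your overall strategy is exactly the paper's: reduce regularity of $R_\p$ to the equality $\dim_{\kappa(\p)}\mathrm{im}(I\to \mathfrak q/\mathfrak q^2)=h$ via the conormal exact sequence and regularity of $T$, then identify that image-dimension with the rank of the relevant Jacobian matrix using Lemmas~\ref{lem:rankimage0} and~\ref{lem:rankimagep}.

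There is one genuine gap in case~(1). Lemma~\ref{lem:rankimage0} is stated for a \emph{complete} discrete valuation ring $V$, because the Kunz machinery (Discussion~\ref{disc-kunz}, in particular the injectivity statement \cite[Proposition~13.14]{Kun}) is set up under that hypothesis, whereas Theorem~\ref{thm-A} does not assume $V$ complete. The paper handles this by first reducing to the complete case: by Proposition~\ref{prop:greg}, the map $R\to \widehat{V}\,\widehat{\otimes}^{\fb}_V R$ (with $\fb$ the ideal generated by the power series variables) is geometrically regular after inverting $p$, so the singular locus is preserved under this base change, while the classical Jacobian matrix is manifestly unchanged. You should insert this reduction before invoking Lemma~\ref{lem:rankimage0}. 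Case~(2) needs no such step, since Lemma~\ref{lem:rankimagep} does not require completeness.
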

\begin{proof}
	In the first case, we note that by Proposition~\ref{prop:greg}, we can reduce to the case where $V$ is complete. Indeed, the map from $R$ to $\wh{V} \wh{\otimes}^{\fb}_V R$, where $\fb$ is the ideal generated by the power series variables, is geometrically regular after inverting $p$, so the singular locus of the source expands to the singular locus of the target; the classical Jacobian matrix is also the same for both rings.
	
	Then, we compute
	\[ \mu = \dim_{\kappa(\p)}(\p R_\p / \p^2 R_{\p}) - \dim(R_\p). \]
	We observe that 	$ \dim(R_\p)= \mathrm{ht}(\p) - h$ and
	\[\dim_{\kappa(\p)}(\p R_\p / \p^2 R_{\p})= \dim_{\kappa(\p)}(\p/ \p^2) -  \dim_{\kappa(\p)}(\mathrm{im}(I\to \p/ \p^2)).\]
	Using Lemma~\ref{lem:rankimage0} and regularity of $R_{\p}$, we have that 
	$ \dim_{\kappa(\p)}(\p R_\p / \p^2 R_{\p}) $ is equal to $\dim(R_\p)$ minus the rank of the classical Jacobian matrix ${J}(\bsf)$ considered as a matrix over $\kappa(\p)$. Thus,
	\[ \mu = h - \mathrm{rank} ({J}(\bsf)_{\kappa(\p )}).\]
	Thus, $R_{\p}$ is regular if and only if $\mu=0$, which happens if and only if the rank of ${J}(\bsf)$ considered as a matrix over $\kappa(\p)$ is equal to $h$, and this in turn happens if and only if $\p$ does not contain the $h\times h$ minors of ${J}(\bsf)$.
	
In the latter case, we proceed similarly. 
In this case, we use Lemma~\ref{lem:rankimagep} to conclude that
	$ \dim_{\kappa(\p)}(\p R_\p / \p^2 R_{\p}) $ is equal to $\dim(R_\p)$ minus the rank of the mixed Jacobian matrix $\widetilde{J}(\bsf)$ considered as a matrix over $\kappa(\p)$. Thus,
	\[ \mu = h - \mathrm{rank} (\widetilde{J}(\bsf)_{\kappa(\p )}),\]
	and $R_{\p}$ is regular if and only if the rank of $\widetilde{J}(\bsf)$ considered as a matrix over $\kappa(\p)$ is equal to $h$, which in turn happens if and only if $\p$ does not contain the $h\times h$ minors of ${J}(\bsf)$.
\end{proof}


	\begin{example}
		Let $R=\displaystyle\frac{\Z[x]}{x^2-n} \cong \Z[\sqrt{n}]$ for $n\in \Z$ a squarefree nonunit. Set $f=x^2-n$. For $p$ prime, we take $\widetilde{J}(f)=\begin{bmatrix} \delta_p(f) & (2x)^p \end{bmatrix}$, where $\delta_p$ is the unique $p$-derivation on $\Z$ extended to $\Z[x]$ via $\delta_p(x)=0$. Observe that	\[\delta_p(x^2-n)=\delta_p(-n)-C_p(x^2,n)\in \delta_p(-n) + (x).\]
		
		For $p$ odd, \begin{align*}
		{\V_R(p,I_1(\widetilde{J}(f)))}&\cong{\V_{\Z[x]}(p,\delta_p(f),(2x)^p,x^2-n)}={\V_{\Z[x]}(p,\delta_p(f),x,x^2-n)}\\
		&={\V_{\Z[x]}(p,\delta_p(f),x,n)}\cong{\V_{\Z}(p,n,\delta_p(-n))}.
		\end{align*} If $p \nmid n$, this is empty since $(p,n)=\Z$. If $p\,|\,n$, since $p^2 \nmid n$, we have $p \nmid \delta_p(-n)$, so this locus is empty again.
		
		For $p=2$,  \begin{align*}\V_R(p,I_1(\widetilde{J}(f)))&\cong\V_{\Z[x]}{(2,\delta_2(f),(2x)^p,x^2-n)}\\&=\V_{\Z[x]}{(2,\delta_2(f),x^2-n)}=\V_{\Z[x]}{(2,\delta_2(-n)+nx^2,x^2-n)}.\end{align*}
		If $n\equiv 1$ mod $4$, then $\delta_2(-n)=-\frac{n^2+n}{2}$ is odd, so \[{\V_{\Z[x]}(2,\delta_2(-n)+nx^2,x^2-n)}={\V_{\Z[x]}(2,x^2-1)}\] is nonempty, corresponding to the singular maximal ideal $(2,\sqrt{n}-1) \subset R$.

			If $n\equiv 2$ mod $4$, then $\delta_2(-n)$ is odd, so \[\V_{\Z[x]}(2,\delta_2(-n)+nx^2,x^2-n)=\V_{\Z[x]}(2,1,x^2)\] is empty.
				If $n\equiv 3$ mod $4$, then $\delta_2(-n)$ is even, so \[\V_{\Z[x]}(2,\delta_2(-n)+nx^2,x^2-n)=\V_{\Z[x]}(2,x^2,x^2-1)\] is empty.				
				Of course, this agrees with the basic number theory fact that for a squarefree nonunit $n$, $\Z[\sqrt{n}]$ is integrally closed in its fraction field (and equivalently regular, as $R$ is a one-dimensional domain) if and only if $n \not\equiv 1 \ \mathrm{mod}\  4$.
	\end{example}

\begin{remark} Theorem~\ref{thm-A} can be used to give a quick proof that for any finitely generated algebra over any complete local ring of mixed characteristic, the singular locus is closed, given the result in equal characteristic. We can write such a ring as a quotient of a polynomial ring over a power series ring over an unramified discrete valuation ring $(V,pV)$, say $T$. Given a primary decomposition $Q_1 \cap \cdots \cap Q_n$ of the presenting ideal $I$, the localization at any prime $\p$ that contains two $Q_i$'s or some $Q_i$ that is not a minimal prime is not a domain, and hence not regular: this is a closed set, and the remainder of the singular locus is the singular locus of the quotients by the minimal primes, so we can reduce to the case that the ring $T/Q$ is a domain. If $p\in Q$, we reduce to the equal characteristic case. Otherwise, the singular locus of $R=T/Q$ is described by Theorem~\ref{thm-A}. A priori, this describes the singular locus as the union of a closed set and a locally closed set, but Nagata's openness criterion \cite[Theorem~24.2]{Mat} applies readily to the complement.\end{remark}


\subsection{Elementary proof}\label{subsec:elem}

We now provide a concrete self-contained proof of Theorem~A in the affine case. For simplicity, we assume that the residue field of the base discrete valuation ring is algebraically closed, though the perfect case follows easily. 

	\begin{theorem}
	Let $(V,pV,K)$ be an unramified discrete valuation ring of mixed characteristic. Assume that $K$ is algebraically closed. Let $T=V[x_1,\dots,x_n]$, and~$\delta$ be a  $p$-derivation mod $p^2$ on~$T$ extending the $p$-derivation mod $p^2$ on $V$. Let $I=(f_1,\dots,f_a)$ be an ideal of pure height~$h$ in~$T$, and~$R=T/I$. Let $\p$ be a prime ideal of $R$ with $p\in \p$.
	
Then, the mixed Jacobian matrix $\widetilde{J}(\bsf)$ considered in $\kappa(\p)$ has rank at most~$h$, and equality holds if and only if $R_{\p}$ is regular.
	\end{theorem}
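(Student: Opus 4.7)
My approach is a direct computation relating the mixed Jacobian to the coordinates of the $f_i$'s in the cotangent space of $T$ at the contraction $\q \subset T$ of $\p$. Since $K$ is algebraically closed, I first reduce to the case that $\q$ is maximal, whence $\q = (p, x_1-a_1, \dots, x_n-a_n)$ for some $a_i \in V$; the elements $p$ and $y_j := x_j - a_j$ then form a regular system of parameters for $T_{\q}$ and a $K$-basis of $\n/\n^2$, where $\n = \q T_{\q}$. (The non-maximal case can be handled similarly using a general regular system of parameters for $T_\q$.)

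For any $f \in I \subset \q$ we have $f(a) \in pV$, so a Taylor expansion of $f$ around $a$ yields
\[ f \equiv p\,c_0(f) + \sum_{j=1}^n c_j(f)\,y_j \pmod{\q^2}, \]
where $c_0(f) := f(a)/p$ and $c_j(f) := (\partial f/\partial x_j)(a)$ lie in $V$. Thus the dimension of the image of $I$ in $\n/\n^2$ equals the rank over $K$ of the coordinate matrix $C := [\,\overline{c_j(f_i)}\,]$, where the bar denotes reduction modulo $p$. By the standard cotangent-space criterion for regular local rings, this rank is at most $h$ with equality if and only if $R_\p$ is regular, so it suffices to show that $\mathrm{rank}_K(C) = \mathrm{rank}_{K}(\widetilde{J}(\bsf) \bmod \q)$.

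The key calculation is that, modulo $\q$,
\[ (\partial f/\partial x_j)^p \equiv \overline{c_j(f)}^{\,p} \quad\text{and}\quad \delta(f) \equiv \overline{c_0(f)}^{\,p} + \sum_{j=1}^n \overline{c_j(f)}^{\,p}\cdot \overline{\delta(y_j)}, \]
the second identity following from the $p$-derivation axioms (Definition~\ref{def:p-deriv}) together with Remark~\ref{remark:der.prod}: the cross terms $y_j^p\delta(c_j(f))$ and $p\,\delta(c_j(f))\delta(y_j)$, as well as the error terms $C_p(\cdot,\cdot)$ and $\delta$ applied to elements of $\q^2$, all lie in $\q$, while $\delta(p\,c_0(f)) \equiv c_0(f)^p \pmod p$ by Remark~\ref{remark:der.prod}(\ref{item:der.prod7}) combined with $\delta(p) \equiv 1 \pmod p$. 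Since each $\overline{\delta(y_j)}$ is a fixed element of $K$ independent of $f$, subtracting an appropriate $K$-multiple of each $x_j$-column from the $p$-column of $\widetilde{J}(\bsf) \bmod \q$ transforms it into the entrywise Frobenius twist $C^{(p)}$ of $C$.

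Because $K$ is algebraically closed and hence perfect, the identity $\det(C^{(p)}) = \det(C)^p$ on square submatrices shows that the vanishing ideals of minors of $C^{(p)}$ and $C$ coincide, so Frobenius preserves the rank of any matrix over $K$. Thus $\mathrm{rank}_K(\widetilde{J}(\bsf) \bmod \q) = \mathrm{rank}_K(C)$, which combined with the cotangent-space criterion above gives the theorem. The main obstacle is the bookkeeping in the $\delta(f)$ identity: one must carefully track the $C_p$-correction and the $p\,\delta(\cdot)\delta(\cdot)$ cross terms from the $p$-derivation axioms and verify that each of them lies in $\q$, which is precisely where the parts of Remark~\ref{remark:der.prod} are put to use.
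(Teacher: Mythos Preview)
Your computation at maximal ideals is essentially the paper's own: Taylor-expand $f_j$ about $a$, apply the $p$-derivation axioms and Remark~\ref{remark:der.prod} to identify the $\delta$-column modulo $\n$, and then observe that the mixed Jacobian and the cotangent-coordinate matrix differ by column operations and an entrywise Frobenius twist, which preserves rank over a perfect field. Your phrasing via column operations is a minor cosmetic variant of the paper's change-of-basis in $\n/\n^2$, but the argument is the same.

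The gap is the non-maximal case. Your parenthetical ``can be handled similarly using a general regular system of parameters for $T_\q$'' is not a proof, and in fact the direct computation does \emph{not} carry over without new input: for non-maximal $\q\supset(p)$ the residue field $\kappa(\q)$ is no longer $K$, and a regular system of parameters for $T_\q$ need not have the form $p,\,x_1-a_1,\dots,x_n-a_n$ with $a_i\in V$, so there is no Taylor expansion over $V$ linking the cotangent coordinates to the entries $(\partial f_j/\partial x_i)^p$ and $\delta(f_j)$. The paper handles this case differently. The inequality $\mathrm{rank}\,\widetilde{J}(\bsf)_{\kappa(\p)}\le h$ follows by upper semicontinuity of matrix rank on $V_R(p)$ together with the density of closed points there (for which the maximal case has been established). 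The equivalence of equality with regularity then follows because $R$ is excellent, so the singular locus is closed: on closed points the rank-$h$ locus coincides with the regular locus, and two closed sets that agree on closed points of $V_R(p)$ coincide. You should either supply this semicontinuity/excellence argument or, if you really want a pointwise argument at arbitrary primes, invoke the universal perivation module as in Lemma~\ref{lem:rankimagep} rather than claim the elementary computation extends verbatim.
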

\begin{proof}
First, we consider the case where $\p$ is a maximal ideal $\m$. Let $\n$ be the preimage of $\m$ in $T$. It follows from the Nullstellensatz that the image of $\n$ in $T/pT$ is of the form $(x_1-a_1,\dots,x_{n}-a_{n})$ for some $a_1,\dots,a_{n}\in K$, so $\n$
	is of the form $\n=(p,x_1-v_1,\dots,x_{n}-v_{n})$ for some $v_1,\dots,v_{n}\in V$ with $v_i= a_i+ pV$, and $f_j(v_1,\dots,v_n)\in pV$ for $j=1,\dots,a$. 
	
	We aim to compute the dimension of $\m/\m^2$ as a $K$-vector space. To this end,  there is a short exact sequence
	\[ 0 \to \frac{I}{\n^2\cap I} \to \frac{\n}{\n^2} \to \frac{\m}{\m^2} \to 0, \]
	so $R_{\m}$ is regular if and only if the $K$-vector space dimension of ${I}/({\n^2}\cap I)$ is equal to ${\dim(T_{\n})-\dim(R_{\m})}$. Since $T_{\n}$ is catenary, and each minimal prime of $I$ has the same height, we have that $\dim(T_{\n})-\dim(R_{\m})=\mathrm{ht}(I T_\n)=h$ for $\n \supseteq I$. 
	
	Write $\tilde{x}_i=x_i-v_i$ for $i=1,\dots,n$, so that $\n=(p,\tilde{x}_1,\dots,\tilde{x}_{n})$. Using Taylor's formula, for each $f_j$, we get 
	\begin{align*} f_j &= \sum_{(\alpha_1,\dots,\alpha_{n})} \frac{\partial^{\alpha_1+\dots+\alpha_n} \, f_j}{\partial x_1^{\alpha_1} \cdots \partial x_{n}^{\alpha_{n}}}(v_1,\dots,v_{n}) \tilde{x}_1^{\alpha_1} \cdots \tilde{x}_{n}^{\alpha_{n}}\\
	&\equiv f_j(v_1,\dots,v_{n}) + \sum_{i=1}^{n} \frac{\partial f_j}{\partial x_i}(v_1,\dots,v_{n}) \tilde{x}_i  \ \mathrm{mod} \ \n^2. 
	\end{align*}
	
By Remark~\ref{remark:der.prod}(\ref{item:der.prod6}) we then have
\[ \delta(f_j) \equiv \delta\left(f_j(v_1,\dots,v_n) + \sum_{i=1}^{n} \frac{\partial f_j}{\partial x_i}(v_1,\dots,v_{n}) \tilde{x}_i \right) \ \mathrm{mod} \ \n.\]
 
 Since $f_j(v_1,\dots,v_n)\in pV \subseteq \n$, we have 
\[C_p\left(f_j(v_1,\dots,v_n),\left(\sum_{i=1}^{n} \frac{\partial f_j}{\partial x_i}(v_1,\dots,v_{n}) \tilde{x}_i\right)\right) \in \n,\]
 so 
 \[ \delta(f_j) \equiv \delta\left(f_j(v_1,\dots,v_n) \right)+ \delta \left(\sum_{i=1}^{n} \frac{\partial f_j}{\partial x_i}(v_1,\dots,v_{n}) \tilde{x}_i \right) \ \mathrm{mod} \ \n.\]
 
 Likewise, since $\tilde{x}_i\in \n$, $C_p(\{\frac{\partial f_j}{\partial x_i}(v_1,\dots,v_{n}) \tilde{x}_i\})\in \n$, and hence
 \[ \delta(f_j) \equiv \delta(f_j(v_1,\dots,v_n)) + \sum_{i=1}^{n} \delta\left(\frac{\partial f_j}{\partial x_i}(v_1,\dots,v_{n}) \tilde{x}_i \right) \ \mathrm{mod} \ \n.\]
 By the product rule for $p$-derivations, we then have
	\[ \delta(f_j) \equiv \delta(f_j(v_1,\dots,v_n)) + \sum_{i=1}^{n} \left(\frac{\partial f_j}{\partial x_i}(v_1,\dots,v_{n})\right)^p \delta(\tilde{x}_i)\ \mathrm{mod} \ \n.\]
	
Note that, for $\alpha\in V$, by Remark~\ref{remark:der.prod}(\ref{item:der.prod7}),
 if we write $f_j(v_1,\dots,v_{n}) \equiv p \alpha \ \mathrm{mod} \ p^2 V$, we obtain that $\delta(f_j(v_1,\dots,v_{n})) \equiv \alpha^p$ in $K=V/pV$.
 Thus, 
 \[ \alpha \equiv {\delta(f_j)}^{1/p} - \sum_i \frac{\partial f_j}{\partial x_i}(v_1,\dots,v_n) {\delta(\tilde{x_i})}^{1/p} \ \mathrm{mod} \ \n.\]
Using this equivalence to substitute in for $f_j(v_1,\dots,v_n)$ in the first expression for $f_j$ modulo $\n^2$ above, we get that the $K$-linear expression for $[f_j] \in \n/\n^2$ in terms of the basis
  
  \[ [p] , [\tilde{x_1} - p {\delta(\tilde{x_1})}^{1/p}], \dots, [\tilde{x_n} - p {\delta(\tilde{x_n})}^{1/p}] \] is
	\[ [f_j] = \overline{\delta(f_j)(\underline{v})}^{1/p} [p] + \overline{\frac{\partial f_j}{\partial{x_1}}(\underline{v})}[\tilde{x}_1- p {\delta(\tilde{x_1})}^{1/p}] + \cdots + \overline{\frac{\partial f_j}{\partial{x_{n}}}(\underline{v})}[\tilde{x}_{n}- p {\delta(\tilde{x_n})}^{1/p}],  \]
	where bars denote images modulo $\n$.
	Thus, the dimension of $I/(\n^2\cap I)$ as a $K$-vector space is the rank of the matrix
	\[\renewcommand\arraystretch{2} \begin{bmatrix}  \overline{\delta(f_1)(\underline{v})}^{1/p} & \overline{\frac{\partial f_1}{\partial x_1}(\underline{v})} &  \dots & \overline{\frac{\partial f_1}{\partial x_{n}}(\underline{v})} \\ \overline{\delta(f_2)(\underline{v})}^{1/p} &
	\overline{\frac{\partial f_2}{\partial x_1}(\underline{v})} &  \dots & \overline{\frac{\partial f_2}{\partial x_{n}}(\underline{v})} \\  
	\vdots & \vdots & \ddots & \vdots \\ \overline{\delta(f_a)(\underline{v})}^{1/p} &
	\overline{\frac{\partial f_{a}}{\partial x_1}(\underline{v})} &  \dots & \overline{\frac{\partial f_a}{\partial x_{n}}(\underline{v})} \end{bmatrix}. \]
	Raising elements to $p$th powers in $K$ does not affect which minors are zero, so the rank of the matrix above is the same as that of
	\[\renewcommand\arraystretch{2} \begin{bmatrix}  \overline{\delta(f_1)(\underline{v})} & \overline{\frac{\partial f_1}{\partial x_1}(\underline{v})}^p &  \dots & \overline{\frac{\partial f_1}{\partial x_{n}}(\underline{v})}^p \\ \overline{\delta(f_2)(\underline{v})} &
	\overline{\frac{\partial f_2}{\partial x_1}(\underline{v})}^p &  \dots & \overline{\frac{\partial f_2}{\partial x_{n}}(\underline{v})}^p \\  
	\vdots & \vdots & \ddots & \vdots \\ \overline{\delta(f_a)(\underline{v})} &
	\overline{\frac{\partial f_{a}}{\partial x_1}(\underline{v})}^p &  \dots & \overline{\frac{\partial f_a}{\partial x_{n}}(\underline{v})}^p \end{bmatrix}. \]
	This is just the image of $\widetilde{J}(\bsf)$ in $T/\n=R/\m$, and thus the $K$-dimension of $I/(\n^2 \cap I)$ is equal to the rank of $\widetilde{J}(\bsf)$ considered in $\kappa(\m)$.
	
	Since the $K$-vector space dimension of $I/(\n^2 \cap I)$ is less than or equal to the height with equality if and only if $R_{\m}$ is regular, the theorem is established in the case of a maximal ideal.
	
	Now, the inequality on the rank of $\widetilde{J}(\bsf)$ for general prime ideals follows by a basic semicontinuity argument, since maximal ideals are dense in $V_R(p)$. Likewise, the characterization of equality in terms of nonsingularity follows, since $R$ is excellent, and so the singular locus is closed.
\end{proof}

\subsection{Theorem B}

We now give the proof of Theorem~B.

	\begin{theorem}\label{thm-B}
		Let $V$ be a discrete valuation ring with uniformizer $p$, and F-finite residue field. Let $(R,\m,k)$ a local ring with $p\in \m$ that is essentially of finite type over a complete $V$-algebra. 
		
	The local ring $R$ is regular if and only if $\widetilde{\Omega}_{R|(\Z,\delta)}$ is free of rank ${\dim(R)+\log_p[k:k^p]}$.
	\end{theorem}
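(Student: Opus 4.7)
The plan is to combine two ingredients: Saito's splitting of the second fundamental sequence \cite[Proposition~2.4]{Sai}, which for a local ring $(R,\m,k)$ with $k$ of characteristic $p$ gives a split short exact sequence
\[ 0 \to F_k(\m/\m^2) \to k \otimes_R \widetilde{\Omega}_{R|(\Z,\delta)} \to \widetilde{\Omega}_{k|(\Z,\delta)} \to 0; \]
and the presentation of $\widetilde{\Omega}$ as a Jacobian cokernel from Construction~\ref{construction-general}. Since $k$ has characteristic $p$, every perivation on $k$ has zero distinguished element, so $\widetilde{\Omega}_{k|(\Z,\delta)} \cong F_k(\Omega_{k|(\Z/p\Z)})$, which is free of rank $\log_p[k:k^p]$ by F-finiteness. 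Consequently the minimal number of generators of $\widetilde{\Omega}_{R|(\Z,\delta)}$ is $\mu(R) + \log_p[k:k^p]$, where $\mu(R) := \dim_k(\m/\m^2)$ is the embedding dimension.

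Direction $(\Leftarrow)$ is immediate: if $\widetilde{\Omega}_{R|(\Z,\delta)}$ is free of rank $\dim(R) + \log_p[k:k^p]$, the minimal number of generators is this, forcing $\mu(R) = \dim(R)$, so $R$ is regular.

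For $(\Rightarrow)$, I would first use Cohen's structure theorem to write $R = T_\p/JT_\p$, where $T = V[[x_1,\dots,x_n]][y_1,\dots,y_m]$ and $\p$ is a prime of $T$. Since $T_\p$ is regular local and $R = T_\p/JT_\p$ is regular, $JT_\p$ is generated by part of a regular system of parameters of $T_\p$, namely a regular sequence $f_1,\dots,f_h$ of length $h = \mathrm{ht}(JT_\p)$. By Construction~\ref{construction-general} together with compatibility of $\widetilde{\Omega}$ with localization, $\widetilde{\Omega}_{R|(\Z,\delta)}$ is the cokernel of a presentation $(R/pR)^h \to (R/pR)^N$ whose matrix is the transpose of the mixed Jacobian matrix $\widetilde{J}(f_1,\dots,f_h)$, where $N = 1 + n + m + \log_p[K:K^p]$ and $K = V/pV$. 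Since this matrix has only $h$ columns, no $(h+1)\times(h+1)$ minors exist, so $\Fitt_{N-h-1}(\widetilde{\Omega}_{R|(\Z,\delta)}) = 0$ automatically. The first paragraph gives that $\widetilde{\Omega}_{R|(\Z,\delta)}$ has $\mu(R)+\log_p[k:k^p] = \dim(R) + \log_p[k:k^p]$ minimal generators; comparing against the $N$ generators in the presentation, the rank of $\widetilde{J}$ at $\m_{R/pR}$ must equal $h$, so some $h \times h$ minor is a unit in $R/pR$, yielding $\Fitt_{N-h}(\widetilde{\Omega}_{R|(\Z,\delta)}) = R/pR$. By the standard Fitting-ideal criterion, $\widetilde{\Omega}_{R|(\Z,\delta)}$ is projective of rank $N-h$; over the local ring $R/pR$ this implies freeness. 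The equality $N - h = \dim(R) + \log_p[k:k^p]$ follows from Kunz's identity $\dim(T/pT)_{\bar\p} + \log_p[\kappa(\bar\p):\kappa(\bar\p)^p] = n + m + \log_p[K:K^p]$ applied in the F-finite regular ring $T/pT$ at $\bar\p = \p/pT$.

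The main obstacle is the freeness step in $(\Rightarrow)$: pointwise rank computations across $\Spec(R/pR)$ alone do not suffice, because $R/pR$ may fail to be reduced (as in $V[[x]]/(x^2-p)$), so a nilpotent ambiguity in $\Fitt_{N-h-1}$ could in principle arise from prime-by-prime vanishing. Choosing a regular-sequence presentation of $JT_\p$ circumvents this by making the vanishing of $\Fitt_{N-h-1}$ automatic from the shape of the presentation matrix rather than from a rank calculation at primes.
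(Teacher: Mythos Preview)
Your proposal is correct and follows essentially the same route as the paper's proof. Both arguments rest on the same three ingredients---Saito's split exact sequence (equivalently, the paper's Lemma~\ref{lem:rankimagep}), the Fitting-ideal characterization of freeness, and Kunz's dimension formula---and both handle the vanishing of the lower Fitting ideal by passing to a generating set of $I_\p$ with exactly $h$ elements; the only difference is organizational (you treat the two implications separately and lead with the generator count, while the paper runs a chain of equivalences and invokes the $h$-generator trick midway).
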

	\begin{proof} Let $K=V/pV$, and set $a=\log_p[K:K^p]$ and $b=\log_p[k:k^p]$. Write $R\cong (T/I)_\p$, where $T$ is a polynomial ring over a power series ring over $V$, and $\p$ is a prime ideal of $T$ that contains $I$. We observe that $\widetilde{J}(\bsf)$ is a presentation matrix for $\widetilde{\Omega}_{R|(\Z,\delta)}$, with $\dim(T)+a$ generators. 
		
		First, we claim that $\dim(T)+a=\height(I_\p) + \dim(R) + b$. We use the formula $\log_p[\kappa(\q):\kappa(\q)^p]-\log_p[\kappa(\q'):\kappa(\q')^p]=\height(\q'/\q)$ for $\q\subseteq \q'$ in an F-finite ring of positive characteristic; cf., \cite{Kun1}, \cite[Lemma~42.7]{Mat}. Applying this in $T$ with the maximal ideal generated by the variables and the zero ideal, and then with $\p$ and the zero ideal, we have that \[\dim(T)+a=\log_p[\fra(T):\fra(T)^p]=\height(\p) + b.  \]
		Since $\dim(R)=\height(\p)-\height(I_\p)$, the claim holds.
		
		Now, by the characterization of locally free modules via Fitting ideals (cf.~\cite[Proposition~D.8]{Kun}), $\widetilde{\Omega}_{R|(\Z,\delta)}$ is locally free of rank $\dim(R) +b$ if and only if 
		\[ F_{\dim(R) +b}(\widetilde{\Omega}_{R|(\Z,\delta)}) = R \qquad \text{and} \qquad F_{<\dim(R) +b}(\widetilde{\Omega}_{R|(\Z,\delta)}) = 0. \]
		Using the equality above and the presentation by the mixed Jacobian matrix, $\widetilde{\Omega}_{R|(\Z,\delta)}$ is locally free of rank $\dim(R) +b$ if and only if
		\[ I_{\height(I_\p)}(\widetilde{J}(\bsf)) = R \qquad \text{and} \qquad I_{>\height(I_\p)}(\widetilde{J}(\bsf)) = 0. \]
		Now, if $I_{\height(I_\p)}(\widetilde{J}(\bsf)) = R$, then
		by Lemma~4.6, $\dim_k(\mathrm{im}(I_\p \to\p/\p^2))=\height(I_\p)$, so $I_\p$ can be generated by $\height(I_\p)$ elements; in this case we can replace $\bsf$ by such a generating set and the latter condition above is automatic. Thus, $\widetilde{\Omega}_{R|\Z}$ is locally free of rank $\dim(R) +b$ if and only if $I_{\height(I_\p)}(\widetilde{J}(\bsf)) = R$, which happens if and only if $I_{\height(I_\p)}(\widetilde{J}(\bsf)_k) = k$ which, by Lemma~4.6 again, is equivalent to $\dim_k(\mathrm{image}(I_\p \to \p/\p^2))=\height(I_\p)$. But this just means that the image of $I$ in $T_\p$ is minimally generated by a regular system of parameters, so this is equivalent to nonsingularity of $R$.
\end{proof}

 \section{Lifting the $\Gamma$-construction to mixed characteristic}\label{sec:Gamma}  
 
The hypothesis of Theorem~B in \S\ref{sec:intro} (Theorem~\ref{thm-B}) requires that the discrete valuation ring have an F-finite residue field.  
Let $R$ be essentially of finite type over a complete local $K$-algebra whose residue class field is $K$ of characteristic $p >0$.
In \cite{HH} a construction is given in terms of a $p$-base $\La$ for such a field $K$ of characteristic $p$ and a sufficiently
small but cofinite subset $\G$ of $\La$ to give a faithfully flat purely inseparable extension of $R$,
denoted $R\Ga$, such that certain properties of $R$ are preserved by the extension, e.g., being reduced, a domain, regular, or Gorenstein F-regular. See (6.3) -- (6.13) of \cite{HH}. Since $R \inj R\Ga$  is purely inseparable, the prime 
spectra of the two rings may be identified, and if $\G$ is sufficiently small the singular locus is preserved.   Note that
certain results of \cite{HH} stated for $K$-algebras of finite type over a complete local $K$-algebra extend at once to
the case of $K$-algebras essentially of finite type. Also note that the term ``$\Gamma$-construction" is not used in
\cite{HH}, but has become standard in a number of papers where it has been utilized.

In this section we discuss lifting
this construction to mixed characteristic, which broadens the applicability of Theorem~B.   Instead of working over a field $K$ of
positive prime characteristic, we work over a mixed characteristic unramified discrete valuation ring $(V, pV,K)$.  
We fix a $p$-base $\La$ for $K$ and a lifting $\tL$ of that $p$-base to $V$.  We consider a local ring $B$ containing $V$ complete with respect
to an ideal $\fb$ such that $pB + \fb$ is the maximal ideal of $B$ (for example, $B$ might be $V[[\vect x n]]$  whether or not $V$
is complete) and an algebra $R$ essentially of finite type over $B$.  
We construct  a faithfully flat extension $R\tGa$ of $R$ such that the defining ideal of the singular locus in $R$ extends to the defining ideal
of the singular locus in $R\tGa$  when $\tG$ is a sufficiently
small cofinite subset of $\tL$.  Moreover,  $R\tGa/pR\tGa \cong (R/pR)\Ga$.  

We first review
the $\G$-construction in characteristic $p$, and then discuss the mixed characteristic version.

\subsection{The $\Gamma$-construction in positive prime characteristic.} 

\begin{discussion}\label{pbase} Fix a field $K$ of positive prime characteristic $p$, and fix a $p$-base $\La$ for $K$.  Let $e$ denote an integer varying in $\N$.   
If $\vect \la n$ are elements of the $p$-base $\La$,
then $K^{p^e}[\vect \la n]$ has degree $p^{ne}$ over $K^{p^e}$, or, equivalently, $K[\la_1^{1/p^e}, \, \ldots, \la_n^{1/p^e}]$ has degree $p^{en}$ over $K$,
where the elements $\la_i^{1/p^e}$ are taken in a suitably large algebraic field extension of $K$.  It follows that if the $z_\la$ are
indeterminates indexed by $\La$,  then for any subset $\La_0 \inc \La$,  
$K[\la^{1/p^e}: \la \in \La_0] \cong K[z_\la: \la \in \La_0]/(z_\la^{p^e} - \la:\la \in \La_0)$.  
\end{discussion}

Let $K$ and $\La$ be as in Discussion~\ref{pbase}.  In the sequel, $\G$ will always denote a subset of $\La$, usually {\it cofinite} in $\La$, i.e.,  a subset such that $\La \smallsetminus \G$
is finite.  Let $\KGe$ denote the field extension $K[\g^{1/p^e}: \g \in \G]$.  If $(B, \m_B)$ is a complete local $K$-algebra with residue class field $K$,
let $B\Ge := \KGe \wh{\otimes}_K B$,   where for a field extension $K \to L$,  $L\wh{\otimes}_K B$ denotes the completion of $L \otimes_K B$ with
respect to $\m_B(L \otimes_K B)$.   If we write $B$ as a module-finite extension of $A = K[[\vect x d]]$,  then 
$A\Ge \cong K\Ge[[\vect x d]]$, $B\Ge \cong B \otimes_A  A\Ge$.  If $R$ is any ring essentially of finite type over
$B$,  we define $R\Ge = R \otimes_B B\Ge$.  $R$ may also be viewed as essentially of finite type over $A$,  and it
is also correct that $R\Ge = R \otimes_A A\Ge$.  Finally, we define 
$$R\Ga := \bigcup_e R\Ge.$$

We also have 
$$R\Ga \cong R \otimes_B B\Ga  \cong R \otimes_A A\Ga.$$
In characteristic $p >0$, we say that $R \inj S$ is {\it purely inseparable} if every element in $S$ has a $p^e\,$th
power in $R$ for some $e$, which may depend on the element.  This implies $\Spec(S) \cong \Spec(R)$:  the
unique prime of $S$ lying over $P$ in $R$ is the radical of $PS$.
Note that $$A\Ga = \bigcup_e K\Ge[[\vect x d]]$$
is regular, with maximal ideal generated by $\vect x d$, and  that it is purely inseparable over $A$.   It is shown in 
\cite{HH} that $A\Ga$ is F-finite, and it follows that $B\Ga$ and $R\Ga$ are F-finite as well. Since $A\Ga$ is
evidently faithfully flat over $A$,  it follows that $R\Ga$ is faithfully flat over $R$ in general.  We summarize
these facts as well as some other results proved in \cite{HH} in the next theorem.

\begin{notation}\label{not:G}  We let $(B, \m_B, K)$ be a complete local ring $K$-algebra, and $K$ be a field of positive prime characteristic $p$, so that $K \inc B$ is a coefficient field.
Let $\La$ be a fixed $p$-base for $K$, let $R$ be an algebra essentially of finite type over $B$,  and let
$\G$ be a varying cofinite subset of $\La$, so that $R\Ga := B\Ga \otimes_B R$ is defined. \end{notation}

\begin{theorem}\label{thm:Gprop}  Let notation be as in \ref{not:G}. Then:
\begin{enumerate}
\item $R\Ga = B\Ga \otimes_B R$ is an F-finite ring  that is a faithfully flat purely inseparable extension of $R$.  
\item If $(A, \m_A, K)$ is  a complete local $K$-algebra such that $A \to B$ is a local module-finite
homomorphism, then $A\Ga \otimes_A R \cong B\Ga \otimes_B R = R\Ga$. 
\item If $\Gamma \inc \Gamma_0$ are cofinite in $\La$,  then $R^\Gamma \to R^{\G_0}$ is faithfully
flat and purely inseparable.  
\item If $I$ is an ideal of $R$, then $(R/I)\Ga \cong R\Ga/IR\Ga$.  
\item Given finitely many prime or radical ideals in $R$, there exists $\Gamma_0$ cofinite in $\La$ such
that for all $\Gamma \inc \Gamma_0$ cofinite in $\Gamma$,   the expansions of these ideals to $R\Ga$
remain prime or reduced, respectively.  In particular, if $R$ is a domain or reduced,  one can choose  $\Gamma_0$
so that $R^\Gamma$ is a domain or reduced, respectively, for all $\Gamma \inc \Gamma_0$ cofinite in $\G_0$.  
\end {enumerate}
\end{theorem}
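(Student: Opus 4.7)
My plan is to reduce each statement to the base case $B = A = K[[\vect x d]]$, where $A\Ga = \bigcup_e K\Ge[[\vect x d]]$ has an explicit description, and then propagate the properties along the composition $A \to B \to R$; in this way each part either reduces to or directly quotes a result of \cite[(6.3)--(6.13)]{HH}. The only new point is that the results there are phrased for algebras of finite type over the complete local base, and one has to observe that the arguments go through verbatim for algebras essentially of finite type, since the properties at issue (flatness, pure inseparability, F-finiteness, reducedness, primeness) all commute with localization, as does the $\Gamma$-construction.

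For (1), the substantive input is F-finiteness of $A\Ga$: the module $(A\Ga)^{1/p}$ is generated over $A\Ga$ by the finite set of monomials in $\{x_i^{1/p}\}$ and $\{\la^{1/p} : \la \in \La \setminus \G\}$. Faithful flatness and pure inseparability of $A \to A\Ga$ are transparent from the description of $A\Ga$ as a directed union of module-finite free purely inseparable extensions of $A$. These three properties pass to $B\Ga$ and $R\Ga$ by base change, since F-finiteness is preserved under finite and essentially-of-finite-type extensions. Part (2) follows from associativity of tensor products once one verifies $A\Ga \otimes_A B \cong B\Ga$; because $B$ is module-finite over the complete ring $A$, the tensor product $K\Ge \otimes_K B$ is already $\m_B$-adically complete, so no further completion is needed in forming $B\Ge$. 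Part (3) is analogous: $A^\G \to A^{\G_0}$ is free and purely inseparable, as it adjoins $p^e$-th roots of finitely many additional $p$-base elements, and these properties are preserved by base-changing to $R$. Part (4) is immediate from right exactness of the functor $(-) \otimes_B B\Ga$.

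The main obstacle is part (5). My plan is to follow \cite[(6.10)--(6.11)]{HH}: using (3) and (4), it suffices to treat a single prime $P$ of $R$ and show that $P R^\G_e$ remains prime for all $e$ when $\G$ is sufficiently small. The extension $R/P \to (R/P)^\G_e$ is module-finite and purely inseparable, so $(R/P)^\G_e$ is a domain if and only if its total quotient ring is a field; this in turn unwinds to the condition that the elements of $\G$ remain $p$-independent when regarded inside the fraction field of $R/P$. Since $\mathrm{frac}(R/P)$ has finite transcendence degree over $K$, only finitely many elements of $\La$ can become $p$-dependent there, so excluding this finite set from $\G_0$ suffices. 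Intersecting the resulting cofinite sets over the given finite list of prime ideals yields a single cofinite $\G_0$ that works for all of them simultaneously, and the passage from primes to arbitrary radical ideals is the standard reduction via minimal primes.
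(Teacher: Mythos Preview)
Your overall approach matches the paper's: reduce to the case $A = K[[\vect x d]]$, verify properties directly there, pass to $R$ by base change, and defer to \cite{HH} for the substantive content. The paper itself gives even less detail than you do, simply citing (6.13) of \cite{HH} for part~(e) and remarking that the passage from finite type to essentially finite type is immediate.

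However, your sketch of the argument behind (5) contains a genuine error. You assert that $\mathrm{frac}(R/P)$ has finite transcendence degree over $K$, and deduce from this that only finitely many elements of $\Lambda$ can become $p$-dependent there. The premise is false: already for $R = B = K[[x]]$ and $P = 0$, the fraction field of $K[[x]]$ has infinite (indeed uncountable, if $K$ is countable) transcendence degree over $K$. What is true is that $\mathrm{frac}(R/P)$ is finitely generated as a field extension of the fraction field of a suitable quotient of $A$, and $\Lambda$ together with the power series variables $x_1,\dots,x_d$ forms a $p$-base for $\mathrm{frac}(A)$; the finiteness that bounds how many $p$-base elements can become dependent in $\mathrm{frac}(R/P)$ comes from this \emph{relative} finite generation, not from anything over $K$ directly. (A smaller slip: the extension $R/P \to (R/P)^{\Gamma}_e$ is not module-finite when $\Gamma$ is infinite, though it is free and purely inseparable, which is all your argument actually needs.) You should either track the argument in \cite[(6.11)--(6.13)]{HH} more carefully or, as the paper does, simply cite it.
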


For (c), note that for complete local rings of the form $A = K[[\vect xn]]$, the faithful flatness of $A\Ga \to A^{\G_0}$
is clear,  and the general case follows by base change to $R$.  

We note that part (e) reduces to the case of one prime ideal, which may be taken to be (0), since a radical
ideal is a finite intersection of prime ideals, and the various choices of $\Gamma$ for individual primes may
be intersected.  For the proof part (e), see (6.13) of \cite{HH} (the extension from finite to essentially finite algebras
is immediate).

Consider a property $\cP$ of rings (e.g., Cohen-Macaulay, Gorenstein, or regular).  We are interested
in the existence of $\Gamma_0$ cofinite in $\La$ such that for all $\Gamma \inc \Gamma_0$ cofinite
in $\G$,  the locus of primes $\p$ where $R$ has property $\cP$ is the same
in $R$ as in $R\Ga$.  If the locus is closed and defined by $I \inc R$,  this is equivalent to
the assertion that the corresponding locus is defined by $IR\Ga$ in $R\Ga$.   

\begin{discussion}\label{flatloc} Recall that if one has a flat local map $(R,\m) \to (S, \n)$ then $S$ is \CM\ if and only if both $R$ and the closed fiber
$S/\m S$ are \CM,  in which case the type of $S$ is the product of the types of $R$ and $S/\m S$.  Moreover,
if $S$ is regular, then $R$ is regular; cf. \cite{Mat}.  \end{discussion} 

We now recall some important properties of the $\Gamma$-construction in positive characteristic.
The result below overlaps substantially with \cite{Has, HH, Mur}, though we include it for completeness.

\begin{theorem} Let notation be as in \ref{not:G}. Let $\cP$ be a property of Noetherian rings such that
\ben
\item $R$ has  $\cP$ if and only if all of its localizations have $\cP$.
\item If $R$ is essentially of finite type over an excellent local ring,  the locus where $R$ has $\cP$ is open.  
\item If $R$ to $S$ is a flat local homomorphism and $S$ has $\cP$ then $R$ has $\cP$.
\item If $R$ to $S$ is a flat, purely inseparable, local homomorphism such that $R$ has $\cP$ and the closed fiber is a field, then $S$ has $\cP$.
\een
Then there exists a cofinite subset $\G_0$ of $\La$ such that for all cofinite subsets
$\G \inc \G_0$,  the locus where $R$ has property $\cP$  (and, consequently, the locus where $R$ does not have property $\cP$)
is the same as the corresponding locus for $R\Ga$ under the identification $\Spec(R) \cong \Spec(R\Ga)$. \smallskip

Hence, if the locus where $R$ does not have $\cP$ is Zariski closed with defining ideal~$I$,  for all $\G \subseteq \G_0$ cofinite in
$\La$, the locus where $R\Ga$ does not have $\cP$ is closed and defined by $IR\Ga$.

In particular, this is the case if $\cP$ is the property of being any of the following:
\beni
\item  regular
\item Cohen-Macaulay
\item Cohen-Macaulay of type at most $h$
\item Gorenstein
\item Gorenstein and F-regular.
\een
\end{theorem}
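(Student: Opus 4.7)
The plan is to combine the abstract hypotheses~(1)--(4) with Theorem~\ref{thm:Gprop} for a prime-by-prime comparison of $\cP$-loci, then to extract a single cofinite $\G_0$ enforcing agreement. By Theorem~\ref{thm:Gprop}(a) the spectra of $R$ and $R\Ga$ are identified via $\p \leftrightarrow \p\Ga$, and for each such $\p$ the induced map $R_\p \to (R\Ga)_{\p\Ga}$ is flat, local, and purely inseparable. Axiom~(1) reduces the comparison of loci to the prime-by-prime comparison of whether $R_\p$ and $(R\Ga)_{\p\Ga}$ have $\cP$. Axiom~(3) gives one direction directly: if $(R\Ga)_{\p\Ga}$ has $\cP$, so does $R_\p$. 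For the reverse, axiom~(4) suffices once the closed fiber $(R\Ga)_{\p\Ga}\otimes_{R_\p}\kappa(\p)$ is known to be a field.

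To control this closed fiber, I would use Theorem~\ref{thm:Gprop}(d): the fiber is a localization of $(R/\p)\Ga$ at $\p\Ga$. Because $R \to R\Ga$ is purely inseparable, every element in the maximal ideal of this fiber is nilpotent, so the fiber is a field precisely when $(R/\p)\Ga$ is reduced at $\p\Ga$, and certainly when $(R/\p)\Ga$ is a domain. For any single prime $\p$ this is achieved by Theorem~\ref{thm:Gprop}(e); the real task is to arrange it for all relevant primes with one $\G_0$.

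Axiom~(2) reduces the task to finitely many primes: write the non-$\cP$-locus of $R$ as $V(I)$ for a radical ideal $I$ with minimal primes $\p_1,\ldots,\p_n$. Applying Theorem~\ref{thm:Gprop}(e) to $\{I,\p_1,\ldots,\p_n\}$, I obtain $\G_0$ such that for every cofinite $\G \inc \G_0$, $IR\Ga$ is radical with minimal primes $\p_iR\Ga$. The contrapositive of axiom~(3) gives $V(IR\Ga) \inc$ non-$\cP$-locus of $R\Ga$; the reverse inclusion is the crux. If it failed, there would exist $\fq \in \Spec R$ with $\fq \notin V(I)$ and $\fq\Ga$ a generic point of the non-$\cP$-locus of $R\Ga$; then $R_\fq$ would have $\cP$ while $(R\Ga)_{\fq\Ga}$ would not, contradicting axiom~(4) once the closed fiber at $\fq$ is a field. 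The hard part is that $\fq$ depends on $\G$, so Theorem~\ref{thm:Gprop}(e) is not directly applicable. The resolution uses that residue fields of $R$ over $K = B/\m_B$ are finitely generated, so a $p$-base for $K$ remains $p$-independent in any such residue field after excluding finitely many elements, which permits further shrinking $\G_0$ to force the fiber-is-a-field condition uniformly.

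Finally, I would verify axioms~(1)--(4) for each of (i)--(v). Axioms~(1) and~(2) are standard for these properties on excellent rings. For~(3) and~(4), the facts recorded in Discussion~\ref{flatloc} -- namely $\dim S = \dim R + \dim(S/\m_R S)$ and $\mathrm{type}(S) = \mathrm{type}(R)\cdot\mathrm{type}(S/\m_R S)$ for a flat local map $R \to S$, with a field closed fiber contributing trivially -- handle regularity, Cohen-Macaulayness, CM of type at most $h$, and Gorensteinness. For Gorenstein-and-F-regular, I would invoke the known behavior of F-regularity under faithfully flat purely inseparable base change with field closed fibers.
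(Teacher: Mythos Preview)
Your overall framework---use axiom~(3) for one inclusion, axiom~(4) for the other once the closed fiber is a field, and Theorem~\ref{thm:Gprop}(e) to force primality of $\p R\Ga$---matches the paper's. The gap is in your ``resolution'' of the uniformity problem. You correctly flag that the offending prime $\q$ depends on $\G$, but your proposed fix does not work: the finite subset of $\La$ that must be thrown out to keep the remaining elements $p$-independent in $\kappa(\q)$ genuinely varies with $\q$, and there is no reason these finite subsets have a common finite union as $\q$ ranges over all primes in the $\cP$-locus of $R$ (or over the generic points of the non-$\cP$-locus of $R\Ga$, which themselves move with $\G$). Your invocation of axiom~(2) to pin down the minimal primes $\p_1,\ldots,\p_n$ of the non-$\cP$-locus of $R$ is a red herring: those primes sit on the wrong side of the comparison, and controlling $\p_i R\Ga$ says nothing about the behavior at a prime $\q \notin V(I)$.

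The paper sidesteps this entirely with a Noetherian/maximality argument that you are missing. By Theorem~\ref{thm:Gprop}(c) together with axiom~(3), the $\cP$-locus of $R\Ga$ can only grow as $\G$ shrinks; by axiom~(2) these loci are open, and open sets in $\Spec(R)$ satisfy ACC, so one may choose $\G_0$ for which this locus is maximal. If the $\cP$-locus of $R^{\G_0}$ were strictly smaller than that of $R$, pick a \emph{single} witnessing prime $\p$, apply Theorem~\ref{thm:Gprop}(e) to that one prime to obtain a cofinite $\G_1 \inc \G_0$ with $\p R^{\G_1}$ prime, and then axiom~(4) gives $\cP$ for $(R^{\G_1})_{\p R^{\G_1}}$---strictly enlarging the $\cP$-locus and contradicting maximality. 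This maximality trick replaces any attempt to control all primes simultaneously, and is the missing idea in your argument. (For case~(v) the paper also supplies an explicit tight-closure verification of axiom~(4) rather than a citation.)
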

\begin{proof}  As $\G$ cofinite in $\La$ decreases, the locus where $R\Ga$ has $\cP$ is ascending by Theorem~\ref{thm:Gprop}(c).  Since open sets in $\Spec(R)$ have ACC,  we can choose $\Gamma_0$ for which this locus is
maximal.  We claim that this choice of $\Gamma_0$ has the required property.  If not, there is some prime $\p$ in $R$ corresponding
to $\q$ in $R^{\G_0}$ such that $R_{\p}$ has property $\cP$ by $R^{\G_0}_Q$ does not.  But we may choose $\Gamma_1$ cofinite
in $\Gamma_0$  such that $\q_1  = \p R^{\Gamma_1}$ is prime.  This implies that the closed fiber of the flat local map
$R_\p \to (R^{\G_1})_{\q_1}$ is a field, since $\q_1 = \p R^{\G_1}$.   Thus, the set of points of $\Spec(R^{\G_1})$ where  $\cP$ holds
has increased strictly to contain the point $Q_1$ corresponding to $P$, contradicting the maximality of the $\cP$ locus corresponding
to $\Gamma_0$.  

The fact that we may apply this result in cases (i)--(iv) follows from Discussion~\ref{flatloc}. In the case of (v) we need check (4).
Note that in the excellent Gorenstein case, weakly F-regular and F-regular are equivalent, and in the local case it suffices to
check that the ideal $I$ generated by one \sop\ $\vect fd$ is tightly closed.  Let the image of $u$ be a socle generator 
in $R/(\vect fd)$.  Since the closed fiber is a field, the image of $u$ is a socle generator in  $S/IS$ as well.  
Let $c$ be a test element in $S$.  After replacing
$c$ by $c^{p^e}$ we may assume that $c \in R$.  If $u$ is in the tight closure of $IS$ then $cu^q \in (IS)^{[q]} = I^{[q]}S$ for
all $q \gg 0$,  and so $cu^q \in I^{[q]}S \cap R = I^{[q]}$ for all $q \gg 0$,  contradicting that $R$ is weakly F-regular.  \end{proof}

\subsection{The $\wt{\Gamma}$-construction in mixed characteristic.} 

\begin{notation} Let $(V, pV, K)$ be a discrete valuation ring of mixed characteristic~$p$. Let $\Lambda$ be a $p$-base for $K$, and $\wt{\Lambda}$ be a fixed lifting to $V$. 
 We denote the by $\wt{\lambda}$ the element lifting $\lambda \in \La$. 
For each subset $\Gamma$ of 
$\Lambda$ we have a corresponding set $\wt{\Gamma}$ in $\wt{\Lambda}$.  We shall eventually restrict $\Gamma$ to be cofinite in $\La$, but this is not needed in the definitions. 
We assume that we have, for every $e \geq 1$,  a family of indeterminates $Z_{\wt{\la},e}$ indexed bijectively by the elements of $\wt{\La}$.  Of course, they are also in bijective correspondence with the
elements of $\La$.  We define $V\Gte$ to be the ring ${V[Z_{\gt,e}: \gt \in \wt{\G}]}/{(Z_{\gt,e}^{p^e} - \gt: \gt \in \wt{\G})}$.  Note that there is a $V$-algebra injection of $V\Gte \inj V\Gt_{e+1}$ 
that sends the image of $Z_{\gt, e}$ to the image of $Z_{\gt, e+1}^p$.  We define $V\Gt := \bigcup_e V\Gte$ using these injections.
 \end{notation}
 
 We first note:
 
 \begin{proposition}  All of the rings $V\Gte$ as well as $V\Gt$ are discrete valuation rings with maximal ideal generated by~$p$. Moreover, $V\Gte/pV\Gte \cong K\Ge$, and $V\Gt /pV\Gt \cong K\Ga$.
 For $e' > e$, and $\G' \supset \G$,   The inclusion maps among the rings 
 $$ (\dagger) \quad V \inj V\Gte \inj V_{e'}^{\wt{\G'}} \inj V^{\wt{\G'}} \inj V^{\wt{\La}}$$ are all free local maps.
 \end{proposition}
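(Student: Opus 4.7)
The plan is to prove the claims first for finite $\wt{\G}$, then for general $\wt{\G}$ and for $V^{\wt{\G}}$ by passing to directed unions. The key structural fact is that $V^{\wt{\G}}_e$ is a free $V$-module. For finite $\wt{\G} = \{\gt_1, \ldots, \gt_n\}$, one has $V^{\wt{\G}}_e \cong \bigotimes_V V[Z]/(Z^{p^e} - \gt_i)$, a tensor product of free $V$-modules of rank $p^e$; this gives an explicit monomial basis $\{\prod_{\gt} Z_{\gt,e}^{\alpha_{\gt}} : 0 \leq \alpha_\gt < p^e\}$. The general case is the directed union over finite subsets, which preserves this basis. Reducing modulo $p$ yields $V^{\wt{\G}}_e/pV^{\wt{\G}}_e \cong K[Z_\gt]/(Z_\gt^{p^e}-\bar\gt) \cong K^\G_e$ via Discussion~\ref{pbase}, and passing to $e \to \infty$ gives $V^{\wt{\G}}/pV^{\wt{\G}} \cong K^\G$.

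Next I would establish the DVR structure. Freeness over $V$ guarantees $p$ is a nonzerodivisor on $V^{\wt{\G}}_e$ and that $\bigcap_n (p^n) V^{\wt{\G}}_e = 0$. For locality, when $\wt{\G}$ is finite, integrality over $V$ together with $V^{\wt{\G}}_e/(p) \cong K^\G_e$ being a field forces every maximal ideal to equal $(p)$; when $\wt{\G}$ is infinite, a maximal ideal not containing $p$ would contract to $(0)$ in every finite $V^{\wt{\G_0}}_e$ (as $(0)$ is the only prime of the DVR $V^{\wt{\G_0}}_e$ not containing $p$), hence would be $(0)$, contradicting that $V^{\wt{\G}}_e$ is not a field. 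The main obstacle is showing $V^{\wt{\G}}_e$ is a domain. For finite $\wt{\G}$, I would use a rank count: any minimal prime $\p$ does not contain $p$ (since $p$ is a nonzerodivisor, hence avoids associated primes), so $V^{\wt{\G}}_e/\p$ is a local, finite, $V$-flat (torsion-free) extension with residue field $K^\G_e$, making it a free $V$-module of rank $p^{ne}$. Since this matches the rank of $V^{\wt{\G}}_e$, the induced surjection $V^{\wt{\G}}_e \twoheadrightarrow V^{\wt{\G}}_e/\p$ is a surjection between free $V$-modules of the same finite rank, hence an isomorphism (a surjective endomorphism of a finitely generated module over a commutative ring is always an isomorphism), forcing $\p = 0$. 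For infinite $\wt{\G}$, $V^{\wt{\G}}_e$ is a directed union of finite-case DVRs under injective maps, hence a domain. Combined with locality, principal nonzero maximal $(p)$, and $\bigcap_n (p^n) = 0$, every nonzero element takes the form $p^n u$ with $u$ a unit, every ideal is a power of $(p)$, and $V^{\wt{\G}}_e$ is a DVR. The same reasoning applied to $V^{\wt{\G}} = \bigcup_e V^{\wt{\G}}_e$ yields that it too is a DVR.

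Finally, for freeness of the chain of inclusions, I would use the explicit monomial bases. For $\wt{\G} \subseteq \wt{\G'}$ and $e \leq e' < \infty$, the division $\beta_\gt = p^{e'-e} q_\gt + r_\gt$ with $0 \leq r_\gt < p^{e'-e}$ and $0 \leq q_\gt < p^e$ for $\gt \in \wt{\G}$ uniquely factors each $V$-basis monomial $\prod Z_{\gt, e'}^{\beta_\gt}$ of $V^{\wt{\G'}}_{e'}$ as the product of the basis element $\prod_{\gt \in \wt{\G}} Z_{\gt, e}^{q_\gt}$ of $V^{\wt{\G}}_e$ and a residual monomial, exhibiting an explicit $V^{\wt{\G}}_e$-basis of $V^{\wt{\G'}}_{e'}$. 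The remaining cases involving $V^{\wt{\G'}}$ or $V^{\wt{\La}}$ follow by taking directed unions of bases, using that the defining inclusions $V^{\wt{\G'}}_{e''} \inj V^{\wt{\G'}}_{e''+1}$ send basis elements to basis elements via $Z_{\gt, e''} \mapsto Z_{\gt, e''+1}^p$. Each inclusion is local since the maximal ideal of every ring in the chain is $(p)$.
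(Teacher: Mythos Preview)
Your proposal is correct and follows the same broad architecture as the paper: reduce to finite $\wt{\G}$, exhibit the explicit monomial $V$-basis, identify the quotient modulo $p$ with $K\Ge$ via the $p$-base description, and pass to directed unions for the general case and for $V\Gt$. The freeness of the chain $(\dagger)$ is also handled the same way in spirit: the paper writes down the monomial basis of $V^{\wt{\La}}$ using fractional exponents $\alpha_i \in (0,1)$ with $p$-power denominators and extracts sub-bases for the intermediate rings, which is exactly what your division-with-remainder description produces.

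The one place you diverge is in verifying that $V\Gte$ is a DVR in the finite case. The paper simply observes that a module-finite free $V$-algebra that is local with maximal ideal $(p)$ and in which $p$ is not nilpotent must be a DVR (Noetherianity plus Krull intersection force every nonzero element to be $p^n u$ with $u$ a unit, whence the ring is a domain with all ideals powers of $(p)$). You instead prove the domain property directly by a rank-count: any minimal prime $\p$ avoids $p$, so $V\Gte/\p$ is $V$-torsion-free, hence free of the same rank $p^{ne}$ as $V\Gte$, forcing $\p = 0$. Your route is more hands-on and avoids the implicit appeal to Krull intersection; the paper's is terser but leans on an unreferenced general fact. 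Both are fine.
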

 \begin{proof} Since the extension $V \to V\Gte$ is integral, all maximal ideals of 
 $V\Gte$ must lie over $pV$. 

 Modulo $pV\Gte$,  we obtain $K[Z_{\gt}]/(Z_{\gt}^{p^e} - \g: \g \in \G)$.  Since the extensions of $K$ obtained by adjoining $p^e\,$th roots
 of a $p$-base are linearly disjoint,  the quotient mod $pV\Gte$ is the field $K\Ge$.  To show this is a discrete valuation ring with maximal ideal generated by $p$,  it suffices to show
 this for finite subsets of $\wt{\Gamma}$, and take a direct limit. But in the case where where we adjoin $p^e\,$th roots for a finite subset of $\G$,   we obtain a module-finite extension, free over $V$, 
 which is local with maximal ideal $(p)$, and $p$ is not nilpotent. These hypotheses suffice to guarantee that the extension ring is a discrete valuation ring.   Likewise, by a direct limit argument, $V\Gt$ is a discrete valuation ring with maximal ideal $pV\Gt$ and residue class field $K\Ga$.  
 
 The freeness assertions for $V^{\wt{\La}}$ over $V$ follows because one has a free basis consisting of all monomials $\wt{\la}_{i_1}^{\alpha_1} \cdots \wt{\la}_{i_s}^{\alpha_s}$
 where $\la_{i_1}, \, \ldots,\, \la_{i_s}$ are distinct, $s$ varies, and the $\alpha_i$ are positive rational numbers less than 1 whose denominators are powers of~$p$.  
 One gets a basis for each $V\Gt$ (respectively, $V\Gte$) by using the part of the basis involving only monomials with $\la_{i_j} \in \Gamma$ (respectively, and the denominators in the exponents 
  are most $p^e$).  Descriptions of the free bases in the remaining cases are left to the reader.  
 \end{proof}

 \begin{construction}\label{con:Gt}
 Let  $A := V[[\vect x d]]$.   We let $$A\Gte := V\Gte \wh{\otimes}_V^{\ux} A \cong V\Gte[[\vect xd]],$$
 where $\wh{\otimes}^{\ux}$ is completion with respect to the expansion of 
 the ideal $(\vect x d)$.  We then let $A\Gt := \bigcup_e A\Gte$.  
 
 Now suppose that $R$ is essentially of finite type over $A$.  We define $R\Gte := A\Gte \otimes_A R$,  and $R\Gt := A\Gt \otimes_A R \cong \bigcup_e R\Gte$,  since
 $\otimes_A$ commutes with direct limit.  
 
 We also  describe a more invariant formulation that does not directly utilize formal power series.  
 
  Let $B$ denote a local $V$-algebra with residue class field $K$ such
 that $B$ is complete with respect to an ideal $\fb$ such that $p + \fb$ is the maximal ideal of $B$. 
 
  Then there is a $V$-algebra surjection $A = V[[\vect x d]] \surj  B$
 such that the $x_j$ map to generators of $\fB$.  Then  we can construct $B\Gte$ using the fact that $B$ is an image of $A$,  or directly, as the
 completion of $V\Gte \otimes_V B$ with respect to the expansion of $\fB$,  which we denote $V\Gte \wh{\otimes}^{\fb}_V B$.   
 
 Likewise, we may construct
 $B\Gt$ by thinking of $B$ as an $A$-algebra, or directly as $\bigcup_e  B\Gte$.   Now, if $R$ is essentially of finite type over $B$, it does not matter whether
 we take $R\Gte$ (respectively $R\Gt$) as $B\Gte \otimes _B R$ (respectively,  $B\Gt \otimes_V R$ ) or as $A\Gte \otimes_A R$  (respectively,  $A\Gt \otimes_V R$).
 It also follows that if we have two choices of   $B$,  say $B \to C$ with the map local and $C$ module-finite over the image of $B$, and $R$ is essentially of finite
 type over $C$ and, hence, $B$,  we may  calculate $R\Gte$ or $R\Gt$ using either $B$ or~$C$.   \end{construction}

\begin{theorem} Let notation be as in Construction~\ref{con:Gt}. For all sufficiently small cofinite $\wt{\Gamma} \inc \wt{\La}$,  the defining ideal of the singular locus in $R$ expands to 
the defining ideal of the singular locus in $R\Gt$.  \end{theorem}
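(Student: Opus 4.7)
The plan is to reduce to Theorem~\ref{thm-A} and to compare the classical and mixed Jacobian matrices for $R$ and for $R\tGa$ directly, using Noetherianity to handle the $p$-base columns. Write $R$ as a localization of $T/I$ with $T = V[[x_1,\ldots,x_d]][y_1,\ldots,y_m]$ and $I$ of pure height $h$ generated by $\bsf$; then $R\tGa$ is the corresponding localization of $T\tGa/IT\tGa$ with $T\tGa = V\tGa[[x]][y]$. For any prime $\q$ of $R\tGa$ lying over $\p$ in $R$, Theorem~\ref{thm-A} reduces the regularity of $R_\p$ and of $R\tGa_\q$ to the non-containment of certain Jacobian minor ideals; I must show these ideals correspond under the expansion $R\to R\tGa$ for sufficiently small cofinite $\wt{\G}$.

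At primes not containing $p$, regularity is controlled by the ideal of $h\times h$ minors of the classical Jacobian $J(\bsf)$. Since $J(\bsf)$ involves only partial derivatives with respect to the variables and does not see the scalar ring, this matrix is literally unchanged on passing to $T\tGa$, so the claim is automatic for any $\wt{\G}$ in this case.

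At primes containing $p$, the mixed Jacobian matrix $\widetilde{J}(\bsf)$ controls regularity. The key observation is that $\wt{\La}\setminus\wt{\G}$ lifts a $p$-base of $V\tGa/pV\tGa$, because each element of $\wt{\G}$ is a $p$-th power in $V\tGa$. I would then choose compatible data: the $p$-derivation modulo $p^2$ on $V$ from Proposition~\ref{prop:p-der-any-V} using $\wt{\La}$, the analogous derivation on $V\tGa$ built from $\wt{\La}\setminus\wt{\G}$, and extensions to $T$ and $T\tGa$ by declaring $\delta(x_i) = \delta(y_j) = 0$. A calculation with the explicit formula of Proposition~\ref{prop:p-der-any-V}---rewriting a $V$-element in the coarser $p$-base expansion of $V\tGa$ and applying $\psi\Gt$ term by term modulo $p^2$---shows that $\delta\Gt$ restricts to $\delta$; the derivations $\partial/\partial\gamma_\lambda$ for $\lambda\in\La\setminus\G$ likewise agree in $K$ and in $K\Ga$ by a Leibniz argument on $p$-base monomials. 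With these choices, the mixed Jacobian of $R\tGa$ is exactly the submatrix of $\widetilde{J}(\bsf)$ obtained by deleting the columns indexed by $\wt{\G}$.

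The submatrix observation immediately gives $I_h(\widetilde{J}\Gt(\bsf)) \subseteq I_h(\widetilde{J}(\bsf))\cdot R\tGa$. For the reverse containment, I would use Noetherianity of $R$: the ideal $I_h(\widetilde{J}(\bsf))$ is finitely generated, hence spanned by finitely many $h\times h$ minors, each involving at most $h$ columns, so only a finite subset $\wt{\G}_0\subseteq\wt{\La}$ of $p$-base columns appears among those generators. For any cofinite $\wt{\G}\subseteq\wt{\La}$ with $\wt{\G}\cap\wt{\G}_0 = \emptyset$, those generating minors survive as minors of $\widetilde{J}\Gt(\bsf)$, yielding $I_h(\widetilde{J}(\bsf))\cdot R\tGa \subseteq I_h(\widetilde{J}\Gt(\bsf))$ and hence equality. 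Combined with the classical-Jacobian case, this proves the theorem. The main technical obstacle is the compatibility of the $p$-derivations and $p$-base derivations needed to identify the mixed Jacobian of $R\tGa$ as a literal submatrix of $\widetilde{J}(\bsf)$; once that is verified, the rest is a short Noetherian finiteness argument.
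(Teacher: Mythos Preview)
Your approach is genuinely different from the paper's, which never touches Jacobian matrices here. The paper argues as follows. At primes not containing $p$, Proposition~\ref{prop:greg} shows $R[1/p] \to R\tGa[1/p]$ is geometrically regular, so the singular loci away from $p$ match for every $\wt{\G}$. At primes containing $p$, one identifies $V_R(p)$ with $V_{R\tGa}(p)$ via the characteristic-$p$ homeomorphism of spectra, and then runs an ACC argument on open sets in $\Spec(R/pR)$: choose $\wt{\G}$ so that the set of $\q\ni p$ with both $R_\q$ and $R\tGa_{\wt{\q}}$ regular is maximal; if some regular $R_\q$ fails to stay regular in $R\tGa$, shrink $\wt{\G}$ further so that $\q R\tGa$ is prime (possible by the characteristic-$p$ $\Gamma$-theory applied modulo $p$), whence flatness with field closed fiber forces $R\tGa_{\wt{\q}}$ regular---contradicting maximality. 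No explicit minors, no compatibility of $p$-derivations.

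Your route would give an explicit description of the singular locus of $R\tGa$, but it has a real gap, and not the one you flagged. You assert $T\tGa = V\tGa[[x]][y]$ and then invoke Theorem~\ref{thm-A} for $R\tGa$. But by Construction~\ref{con:Gt}, $A\tGa = \bigcup_e V\Gte[[x]]$, which is strictly smaller than $V\tGa[[x]]$ whenever power series variables are present: a series $\sum a_i x^i$ whose coefficients require unboundedly high $p$-power roots lies in the latter but not the former. So $T\tGa$ is not a polynomial ring over a power series ring over a DVR, and Theorem~\ref{thm-A} does not apply as stated. To salvage your argument you would need to extend Construction~\ref{construction-universal}, Lemma~\ref{lem:rankimagep}, and the injectivity input from \cite{Sai} to $T\tGa$---plausible, but substantial extra work that the paper's soft flatness-plus-ACC proof is designed to bypass. (Your compatibility claims for $\delta$ and for the $p$-base derivations are in fact correct; the obstacle is the applicability of Theorem~\ref{thm-A} itself.) Your argument does go through without modification when $B = V$, i.e., no power series variables, since then $T\tGa = V\tGa[y]$ is honestly a polynomial ring over the DVR $V\tGa$.
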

 \begin{proof}  Choose $\Gamma$ so that the set of primes $\q$ containing $p$ where $R_\q$ and $R\Gt_{\wt{\q}}$ are both regular is maximal.  Note that we may identify
 $V_R(p) \cong  \Spec(R/pR)$ with $V_{R\Gt}(p) \cong \Spec(R\Gt/(p)) \cong (R/pR)\Ga$:   here $\wt{\q}$ denotes the unique prime of
 $R\Gt$ lying over $\q$.  One can do this since the open sets in $\Spec(R/pR)$ have ACC. 
 
 For primes of $R$ containing  $p$ the maximality of $\Gamma$ implies that the regular locus in $R$ is identified with the regular locus of primes in $R\Gt$ containing $p$:
 if this fails for $\q$, one may decrease $\Gamma$ so that $\q R\Gt = \wt{\q}$ (the issues may be considered modulo $p$, where the statement follows from the corresponding result
 for the $\Gamma$-construction).   Hence, the regular loci for primes containing $p$
 agree for this $\Gamma$ or any other smaller cofinite choice of $\Gamma$.     
 At primes not containing $p$, one may localize both rings at the element $p$.  The map is then geometrically regular, and one has regularity at a prime of the target if and only if
 one has regularity at its contraction. \end{proof}

We conclude by applying the mixed characteristic $\Gamma$-construction to give a regularity criterion without any F-finiteness hypotheses on the residue field. The following is Theorem~C from the introduction.

\begin{corollary}\label{cor-thmC}
	Let $V$ be an unramified discrete valuation ring of mixed characteristic, and let $(R,\m,k)$ be essentially of finite type over a complete local $V$-algebra with $p\in \m$. We retain the notation of Construction~\ref{con:Gt}.
	
	The ring $R$ is regular if and only if for any sufficiently small cofinite subset $\wt{\Gamma} \subseteq \wt{\Lambda}$, $\wt{\Omega}_{R\tGa|(\Z,\delta)}$ is free of rank $\dim(R)+\log_p[k(R^{\wt{\Gamma}}):k(R^{\wt{\Gamma}})^p]$, where $k(R^{\wt{\Gamma}})$ denotes the residue field of $R^{\wt{\Gamma}}$.
\end{corollary}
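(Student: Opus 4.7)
The plan is to reduce to Theorem~\ref{thm-B} by passing to $R^{\wt{\Gamma}}$ for a sufficiently small cofinite $\wt{\Gamma}\subseteq\wt{\Lambda}$, at which point the F-finiteness hypothesis on the residue field is acquired. The crucial observation is that $V^{\wt{\Gamma}}$ is a discrete valuation ring with uniformizer $p$ and residue field $K^{\Gamma}$, and that $K^{\Gamma}$ has the \emph{finite} set $\Lambda\smallsetminus\Gamma$ as a $p$-base, hence is F-finite. Further, if $R$ is essentially of finite type over a complete $V$-algebra $B$, then by Construction~\ref{con:Gt}, $R^{\wt{\Gamma}}$ is essentially of finite type over the complete $V^{\wt{\Gamma}}$-algebra $B^{\wt{\Gamma}}$. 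We use the preceding theorem to pin down a $\wt{\Gamma}$ for which the defining ideal of the singular locus of $R$ expands to that of $R^{\wt{\Gamma}}$; then, throughout, we work with the localization of $R^{\wt{\Gamma}}$ at the unique prime $\wt{\m}$ lying over $\m$, whose existence and uniqueness come from the identification $R^{\wt{\Gamma}}/pR^{\wt{\Gamma}}\cong (R/pR)^{\Gamma}$ and the fact that the classical $\Gamma$-construction is purely inseparable.

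First I would verify that $R$ is regular if and only if $R^{\wt{\Gamma}}$ is regular. The forward direction is immediate from the preceding theorem: if $R$ is regular, the defining ideal of its singular locus is the unit ideal, and this expands to the unit ideal in $R^{\wt{\Gamma}}$. The converse follows from faithful flatness of $R\to R^{\wt{\Gamma}}$, since faithfully flat descent of regularity is a standard consequence of the dimension formula and behavior of regular sequences.

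Next I would apply Theorem~\ref{thm-B} to $R^{\wt{\Gamma}}$, obtaining that $R^{\wt{\Gamma}}$ is regular if and only if $\wt{\Omega}_{R^{\wt{\Gamma}}|(\Z,\delta)}$ is free of rank $\dim(R^{\wt{\Gamma}})+\log_p[k(R^{\wt{\Gamma}}):k(R^{\wt{\Gamma}})^p]$. To match this with the stated rank, it remains to check $\dim(R^{\wt{\Gamma}})=\dim(R)$. For this, I would observe that $R^{\wt{\Gamma}}\otimes_R k=(R^{\wt{\Gamma}}/pR^{\wt{\Gamma}})\otimes_{R/pR}k=(R/pR)^{\Gamma}\otimes_{R/pR}k$, which localizes to the purely inseparable field extension $k(R^{\wt{\Gamma}})$ of $k$. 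Thus the closed fiber of the faithfully flat local map $R\to R^{\wt{\Gamma}}$ is a field, and the dimension formula gives $\dim(R^{\wt{\Gamma}})=\dim(R)$. Combining the two equivalences yields the corollary.

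The main obstacle, which is essentially mechanical but requires care, is verifying the closed-fiber/spectrum identifications between $R$ and $R^{\wt{\Gamma}}$ at primes containing $p$, and confirming that the closed fiber $R^{\wt{\Gamma}}/\m R^{\wt{\Gamma}}$ is precisely the residue field $k(R^{\wt{\Gamma}})$ appearing in the statement. Both assertions reduce, via the isomorphism $R^{\wt{\Gamma}}/pR^{\wt{\Gamma}}\cong(R/pR)^{\Gamma}$, to the already-established fact that the classical $\Gamma$-construction in characteristic $p$ is faithfully flat and purely inseparable.
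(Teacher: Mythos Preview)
Your proposal is correct and matches the paper's (implicit) approach. The paper gives no proof for this corollary, treating it as an immediate consequence of the preceding theorem together with Theorem~\ref{thm-B}; your write-up supplies exactly the expected details---passing to $R^{\wt{\Gamma}}$ to acquire F-finiteness of the residue field, using the preceding theorem and faithful flatness to transfer regularity, and checking $\dim(R^{\wt{\Gamma}})=\dim(R)$ via the closed fiber being a field.
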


\section*{Acknowledgments} The authors thank Elo\'isa Grifo and Zhan Jiang for helpful comments on a draft of this article. We also thank the referee for many valuable comments.

\bigskip\bigskip

	\quad\bigskip
	
	
	
\end{document}